\documentclass[11pt]{article}
\textwidth=16.5cm
\textheight=23.0cm
\topmargin=-1cm
\oddsidemargin=0.1cm
\baselineskip=7.0mm
\usepackage{amssymb,amsmath,bm,mathrsfs,makeidx,amsfonts,graphicx,amsthm}
\usepackage[square, comma, sort&compress, numbers]{natbib}
\usepackage{epsfig}
\usepackage{rotating}
\usepackage{mathtools}
\usepackage{bm}
\usepackage{extarrows}
\usepackage[colorlinks,linkcolor=red,anchorcolor=blue,citecolor=blue]{hyperref}
\usepackage{booktabs}
\numberwithin{equation}{section}
\def\beginn{\begin{eqnarray*}}
\def\endn{\end{eqnarray*}}
\def\beginy{\begin{eqnarray}}
\def\endy{\end{eqnarray}}
\def\begine{\begin{enumerate}}
\def\ende{\end{enumerate}}

\def\be{\begin{equation}}
\def\ee{\end{equation}}
\def\bea{\begin{eqnarray}}
\def\eea{\end{eqnarray}}

\numberwithin{equation}{section}
\theoremstyle{plain}
\newtheorem{thm}{Theorem}[section]

\newtheorem{lem}{Lemma}
\newtheorem{coro}{Corollary}
\newtheorem{rmk}{Remark}

\newtheorem{deff}{Definition}

\newtheorem{cond}{Condition}
\newcommand{\non}{\nonumber \\}
\newcommand{\bbA}{{\bf A}}

\newcommand{\bba}{{\bf a}}
\newcommand{\bbB}{{\bf B}}
\newcommand{\bbC}{{\bf C}}
\newcommand{\bbc}{{\bf c}}
\newcommand{\bbD}{{\bf D}}

\newcommand{\bbe}{{\bf e}}
\newcommand{\bbE}{{\bf E}}

\newcommand{\bbF}{{\bf F}}
\newcommand{\bbP}{{\bf P}}

\newcommand{\bbG}{{\bf G}}
\newcommand{\bbK}{{\bf K}}
\newcommand{\bbH}{{\bf H}}

\newcommand{\bbw}{{\bf w}}
\newcommand{\bbI}{{\bf I}}
\newcommand{\bbi}{{\bf i}}

\newcommand{\bbM}{{\bf M}}

\newcommand{\bbR}{{\bf R}}

\newcommand{\bbs}{{\bf s}}

\newcommand{\bbS}{{\bf S}}

\newcommand{\bbt}{{\bf t}}
\newcommand{\bbT}{{\bf T}}

\newcommand{\bbU}{{\bf U}}
\newcommand{\bbu}{{\bf u}}
\newcommand{\bbV}{{\bf V}}
\newcommand{\bbv}{{\bf v}}

\newcommand{\bbW}{{\bf W}}

\newcommand{\bbX}{{\bf X}}

\newcommand{\bbx}{{\bf x}}

\newcommand{\bbY}{{\bf Y}}
\newcommand{\bby}{{\bf y}}
\newcommand{\bbZ}{{\bf Z}}
\newcommand{\bbz}{{\bf z}}
\newcommand{\bbb}{{\bf b}}

\newcommand{\bbL}{{\bf L}}



\newcommand{\ep}{\ensuremath{\epsilon}}




\newcommand{\bbgam}{{\boldsymbol\Gamma}}
\newcommand{\bbzero}{{ 0}}
\newcommand{\bbone}{{\bf 1}}
\newcommand{\bbmu}{{\boldsymbol\mu}}
\newcommand{\bbom}{{\bold{\Omega}}}

{
\begin{document}

\title{ A unified matrix model including both CCA and F matrices in multivariate analysis:  the largest eigenvalue and its applications}
 \author{Xiao Han\footnotemark[1], Guangming Pan\footnotemark[2], and Qing Yang\footnotemark[3]}
\renewcommand{\thefootnote}{\fnsymbol{footnote}}
\footnotetext[1]{Xiao Han, School of Physical and Mathematical Sciences, Nanyang Technological University, Singapore, 637371(Email: xhan011@e.ntu.edu.sg).}
\footnotetext[2]{Guangming Pan, School of Physical and Mathematical Sciences, Nanyang Technological University, Singapore, 637371(Email: gmpan@ntu.edu.sg). This work was partially supported by a MOE Tier 2 grant 2014-T2-2-060 and by a MOE Tier 1 Grant RG25/14 at the Nanyang Technological University, Singapore.}
\footnotetext[3]{Qing Yang, School of Physical and Mathematical Sciences, Nanyang Technological University, Singapore, 637371(Email: qyang1@e.ntu.edu.sg).}
\date{}
\maketitle

\begin{abstract}
Let $\bbZ_{M_1\times N}=\bbT^{\frac{1}{2}}\bbX$ where $(\bbT^{\frac{1}{2}})^2=\bbT$ is a positive definite matrix and $\bbX$ consists of independent 
random variables with mean zero and variance one.
This paper proposes a unified matrix model $$\bold{\bbom}=(\bbZ\bbU_2\bbU_2^T\bbZ^T)^{-1}\bbZ\bbU_1\bbU_1^T\bbZ^T,$$ where
$\bbU_1$ and $\bbU_2$ are isometric with dimensions $N\times N_1$ and $N\times (N-N_2)$ respectively such
that $\bbU_1^T\bbU_1=\bbI_{N_1}$, $\bbU_2^T\bbU_2=\bbI_{N-N_2}$ and $\bbU_1^T\bbU_2=0$. Moreover, $\bbU_1$ and $\bbU_2$ (random or non-random) are independent
 of $\bbZ_{M_1\times N}$ and with probability tending to one, $rank(\bbU_1)=N_1$ and $rank(\bbU_2)=N-N_2$.
 We establish the asymptotic Tracy-Widom distribution for its largest eigenvalue under moment assumptions on $\bbX$ when $N_1,N_2$ and $M_1$ are comparable.

The asymptotic distributions of the maximum eigenvalues of the matrices
used in Canonical Correlation Analysis (CCA) and of F matrices (including centered and non-centered versions) can be both obtained from that of $\bold{\bbom}$ by selecting appropriate matrices $\bbU_1$ and $\bbU_2$.
Moreover, via appropriate matrices $\bbU_1$ and $\bbU_2$, this matrix $\bold{\bbom}$ can be applied to
some multivariate testing problems that cannot be done by the traditional CCA matrix.
To see this, we explore two more applications. One is in the MANOVA approach for testing the equivalence of several high-dimensional mean vectors,
where $\bbU_1$ and $\bbU_2$ are chosen to be two nonrandom matrices.
The other one is in the  multivariate linear model for testing the unknown parameter matrix, where $\bbU_1$ and $\bbU_2$ are random.
 For each application, theoretical results are developed and various numerical studies are conducted to confirm the satisfactory  empirical performance.

{\small \bf KEY WORDS}:  Canonical correlation analysis, F matrix, Largest eigenvalue, MANOVA, multivariate linear model, Tracy-Widom distribution, random matrix theory.
\end{abstract}

\section{Introduction}
Rapid development of modern technology nowadays necessitates statistical inference on high-dimensional data in many scientific fields such as image
processing, genetic engineering, machine learning and so on. This raises a boom in pursuing methodologies to remedy classical theories which are designed for the fixed dimensions.  For such a purpose one popular tool is the spectral analysis of high-dimensional matrices in random matrix theory. The readers may refer to the monograph \cite{BS06} and the references therein for a comprehensive reading.

This paper focuses on the largest eigenvalues. Ever since the pioneer work discovering the limiting distribution of the largest eigenvalue for the large Gaussian Wigner ensemble by Tracy and Widom in \cite{TW1994,TW1996},  the largest eigenvalues
of large random matrices have been widely studied. To name a few we mention \cite{BPZ2014a}, \cite{K2007}, \cite{LHY2011} and \cite{KY14}. The largest eigenvalues prove to be fruitful objects of study, playing an important role in
multivariate statistical analysis such as principle component analysis (PCA),
multivariate analysis of variance (MANOVA), canonical correlation analysis (CCA) and discriminant analysis. Among the vast literature, we refer the readers to a seminal work \cite{J08}, as well as a recent work \cite{WY}. 
Johnstone in \cite{J08} considered a double Wishart setting and developed the Tracy-Widom law of its largest root when the dimension of the data matrix $\bbX$ and the sample size are comparable with the dimension being even. This limiting distribution can be applied to conduct various statistical inferences in his companion paper \cite{J09}. Considering that the results in \cite{J08} work for the Gaussian distribution only, the authors in \cite{WY} investigated an F type matrix for the general distributions without even dimension restriction. 
 However, one may notice that the Tracy-Widom result in \cite{WY} is only verified for zero mean data.


 We now set a stage to present our matrix model. The most initial motivation is the matrix frequently used in CCA.
Suppose that we are given two sets of random variables, organized into two random vectors $\bbx$ and $\bby$ with dimensions $M_1$ and $M_2$, respectively.
Without loss of generality, we may assume that $M_1\leq M_2$.
In multivariate analysis, CCA is the favorite method to investigate the correlation structure between two random vectors, which was introduced by Hotelling \cite{H1936} first. The aim of CCA is to seek two vectors $\bba$ and $\bbb$ such that the linear combination of $\bba^T\bbx$ and $\bbb^T\bby$ can get the highest correlation coefficient. i.e.
\begin{eqnarray}\label{1216.2}
\rho(\bba,\bbb)\coloneqq \frac{Cov(\bba^T\bbx,\bbb^T\bby)}{\sqrt{Var(\bba^T\bbx)}\sqrt{Var(\bbb^T\bby)}}.
\end{eqnarray}
If $\rho_1=\rho_1(\bba_1,\bbb_1)\coloneqq \max\limits_{\bba,\bbb}\rho(\bba, \bbb)$, then $\rho_1$ is called the first canonical correlation coefficient. Given the first canonical correlation coefficient, one can continue to seek the second canonical correlation coefficient which is the maximum correlation coefficient of $\bba_2^T\bbx$ and $\bbb_2^T\bby$, uncorrelated to $\bba_1^T\bbx$ and $\bbb_1^T\bby$. Iterating this procedure to the end, we can get the canonical correlation coefficients $\rho_1$, $\rho_2$,...,$\rho_{M_1}$. Denote the population covariance matrix of any two random vectors $\bbu$ and $\bbv$ by $\Sigma_{\bbu\bbv}$.
By (\ref{1216.2}), it is not hard to conclude that in order to find the population canonical correlation coefficients $\rho_1$, $\rho_2$,...,$\rho_{M_1}$, one only need to solve the determinant equation
\begin{eqnarray}\label{1216.3}
det(\Sigma_{\bbx\bby}\Sigma^{-1}_{\bby\bby}\Sigma_{\bbx\bby}^T-\rho^2\Sigma_{\bbx\bbx})=0.
\end{eqnarray}
 If $\bbx$ and $\bby$ are independent, then $\rho_1^2=\cdots=\rho_{M_1}^2=0$ or equivalently the largest eigenvalue of $\Sigma_{\bbx\bbx}^{-1}\Sigma_{\bbx\bby}\Sigma_{\bby\bby}^{-1}\Sigma_{\bbx\bby}^T$, $\rho_1^2=0$. For the moment, we assume that $\mathbb{E}\bbx=\mathbb{E}\bby=0$ for ease of illustration, but bearing in mind that such conditions are not needed in this work. Then under the classical low-dimensional setting, i.e., both $M_1$ and $M_2$ are fixed but $N$ is large, one can safely use $\gamma_1$, the largest eigenvalue of $\bbA_{\bbx\bbx}^{-1}\bbA_{\bbx\bby}\bbA_{\bby\bby}^{-1}\bbA_{\bbx\bby}^T$, to estimate
$\rho_1^2$ since the sample covariance matrices converge to their population
counterparts as $N$ tends to infinity, where
$$\bbA_{\bbx\bbx}=\bbX\bbX^T,\ \ \bbA_{\bby\bby}=\bbY\bbY^T, \ \ \bbA_{\bbx\bby}=\bbX\bbY^T.$$
However, when $M_1$ and $M_2$ are comparable with the sample size $N$, the consistency will no longer hold for the sample covariance matrices and accordingly the largest sample canonical correlation coefficient $\gamma_1$. Putting forward a theory on high-dimensional
CCA is then much needed.

If $\bbx$ or $\bby$ is Gaussian distributed, it is not difficult to derive that the largest eigenvalue of $\bbS_{xy}=\bbA_{\bbx\bbx}^{-1}\bbA_{\bbx\bby}\bbA_{\bby\bby}^{-1}\bbA_{\bbx\bby}^T$ reduces to that of the double Wishart matrices in \cite{J08}, see the equation (\ref{0603.1}) below. Thus after centralizing and re-scaling, it converges to the Type-1 Tracy-Widom distribution as proved in \cite{J08} and \cite{WY}.
However, to our best knowledge, corresponding results are not yet available for non-gaussian distributions,  which is the starting point of this paper. Here we would also remark some other existing work about CCA in the high dimensional case. Central limit theorems of linear spectral statistics of CCA have been established in \cite{YP2015}, which is for zero mean data, while spiked eigenvalues are investigated for CCA in \cite{BHPZ}. There are also a lot of existing work about sparse CCA and we mention \cite{GZ} among others.



Denote the largest eigenvalue of $\bbS_{xy}$ by $\gamma_1$.
Then $\gamma_1$ is also the largest eigenvalue of
$\bbT_{xy}\coloneqq \bbP_{y}\bbP_{x}\bbP_{y}$,
where
$$\bbP_{x}=\bbX^T(\bbX\bbX^T)^{-1}\bbX,\ \ \bbP_{y}=\bbY^T(\bbY\bbY^T)^{-1}\bbY.$$ Equivalently, it is
the largest solution to
  $\det(\bbX\bbP_y\bbX^T-\gamma_1\bbX\bbX^T)=0$.
 Define $\lambda_1=\frac{\gamma_1}{1-\gamma_1}$. Then under the condition that $\liminf_{N\rightarrow \infty}\frac{N}{M_1+M_2}>1$, $\lambda_1$ is also the largest solution of$$\det(\bbX\bbP_y\bbX^T-\lambda_1\bbX(\bbI-\bbP_{y})\bbX^T)=0.$$
The matrix of interest now becomes
 \begin{eqnarray}\label{0603.1}
  (\bbX(I-\bbP_y)\bbX^T)^{-1}\bbX\bbP_y\bbX^T.
  \end{eqnarray}

Inspired by (\ref{0603.1}), we propose a unified matrix model
\begin{equation}\label{b11}\bbom=(\bbZ\bbU_2\bbU_2^T\bbZ^T)^{-1}\bbZ\bbU_1\bbU_1^T\bbZ^T
\end{equation}
where $\bbU_1^T\bbU_1=\bbI_{N_1}$, $\bbU_2^T\bbU_2=\bbI_{N-N_2}$ and $\bbU_1^T\bbU_2=0$ (see (\ref{1016.3}) below for more details).  We establish the asymptotic Tracy-Widom law for its largest eigenvalue in this work. An intriguing observation is that although our Tracy-Widom approximation is theoretically established for diverging dimensions, it keeps accurate for small ones (the dimension $M_1$ can be as small as 5 in Table \ref{t1}).


The motivations behind the construction of such a matrix model $\bbom$ are illustrated as follows. First, the matrix (\ref{0603.1}) used in CCA is
 a special case of $\bbom$ by noticing that $\bbP_y$ and $\bbI-\bbP_y$ are orthogonal projection matrices. In addition, the non-zero mean data 
 can be accommodated by writing $\bbU_2\bbU_2^T=\bbP_N(I-\bbP_{Ny})\bbP_N, \bbU_1\bbU_1^T=\bbP_N\bbP_{Ny}\bbP_N$ and observing that the mean vectors can be absorbed into the matrix $\bbP_N=\bbI_N-\frac{1}{N}\bbone_N\bbone_N^T$, see Remark \ref{y0424.1} below (the definition of $\bbP_{Ny}$ is given there). Further illustrations are given in Section \ref{cca}, where we deal with the independence testing via CCA in detail.

%



Secondly, 
by selecting appropriate matrices $\bbU_1$ and $\bbU_2$ ({\bf random} or {\bf nonrandom})
the Tracy-Widom distribution for the largest eigenvalue of this unified matrix $\bbom$ can be applied to the other multivariate testing problems, which cannot be done by the traditional CCA matrix (\ref{0603.1}). To see this, we explore two more applications.
One is the MANOVA approach in testing the equivalence of $g$ groups' mean vectors.
It is well known that classical MANOVA relies on the eigenvalues of the matrix $\bbV=\bbW^{-1}\bbB$, where $\bbW$ is the within sum of squares and cross-product matrix (SSCP) and $\bbB$ is the between SSCP, see \cite{ander84}. The matrix $\bbV$ can be written in terms of $\bbom$ by choosing \emph{\textbf {nonrandom}} matrices $\bbU_1$ and $\bbU_2$ as in equations (\ref{0412.1})-(\ref{y0424.2}) below, with the derivation details postponed to Section \ref{manova}. The other one is in the
multivariate linear regression model $\bbY=\bbX\bbB+\bbZ$ for testing the unknown parameter matrix $\bbB$. We consider both the linear hypothesis testing $H_0: \bbC_1\bbB=\bbgam_1$ and the general intra-subject hypothesis testing  $H_0: \bbC\bbB\bbD=\bbgam$. Taking the linear one as an example, we can rewrite its testing matrix $\bbM_1=\bbE_1^{-1}\bbH_1$ in the form of $\bbom$ by selecting \emph{\textbf {random}} matrices $\bbU_1\bbU_1^T=\bbP_{\widetilde{\bbX}}$ and $\bbU_2\bbU_2^T=\bbI-\bbP_{\bbX}$ in (\ref{0408.2}), where $\bbE_1$ is the error  SSCP and $\bbH_1$ the hypothesis SSCP described in Section \ref{mul}. Simulation results in Sections \ref{sim3}-\ref{sim4} show that the largest eigenvalue performs well in these two applications for both dense but weak alternative (DWA) and sparse but strong
alternative (SSA).

Thirdly, the matrix $\bbom$ generalizes the models in \cite{J08} and \cite{WY}. We would like to point out that if the matrix $\bbZ$ is generated from Gaussian distribution, then the two terms $(\bbZ\bbU_2\bbU_2^T\bbZ^T)$ and $(\bbZ\bbU_1\bbU_1^T\bbZ^T)$ in $\bbom$ are independent with normal entries, which reduces to the one studied in \cite{J08}.  Without this Gaussian assumption, we indeed investigate a more general case--the two terms can only be considered as uncorrelated with each other. We would also like to highlight that $\bbom$ not only covers the $F$-matrix in \cite{WY}, but also generalize it to any non-zero mean vectors by choosing some special $\bbU_2$ and $\bbU_1$. Detailed explanations will be given in Section \ref{ccagenr}.  We remark that all three applications in Sections \ref{cca}-\ref{mul} can not be done by either \cite{J08} or \cite{WY} because we neither assume Gaussian distribution for $\bbZ$ nor impose independent structure on $(\bbZ\bbU_2\bbU_2^T\bbZ^T)$ and $(\bbZ\bbU_1\bbU_1^T\bbZ^T)$.

This paper is organized as follows. In Section \ref{ccagenr}, the main theorem about the Tracy-Widom distribution for the largest eigenvalue $\lambda_1$ of the unified matrix $\bbom$ is presented. Three applications are introduced in Sections \ref{cca}, \ref{manova} and \ref{mul}, regarding the high-dimensional independence testing via CCA, MANOVA and multivariate linear regression, respectively.  Except for theoretical results developed in previous sections, we also conduct a series of
simulations in Section \ref{numer} to  investigate the accuracy of the proposed asymptotic Tracy-Widom distribution (Section \ref{sim1}) as well as its numerical performance in our three applications (Sections \ref{sim2}-\ref{sim4}).  We give an outline and some key steps for the proof of Theorem \ref{1016-1} in the appendix of Section \ref{app}, while all detailed
proofs are relegated to the supplementary material.

\section{Main result on $\bbom$}\label{ccagenr}

We investigate the largest eigenvalue of the unified matrix
\begin{eqnarray}\label{1016.3}
\bbom=(\bbZ\bbU_2\bbU_2^T\bbZ^T)^{-1}\bbZ\bbU_1\bbU_1^T\bbZ^T
 \end{eqnarray}
 in this section and develop its Tracy-Widom distribution without any specific distribution assumption.
Here $\bbZ_{M_1\times N}=\bbT^{\frac{1}{2}}\bbX$, $\bbT_{M_1\times M_1}$ can be any positive definite matrix and $\bbX=(X_{ij})_{M_1\times N}$  satisfies the following Condition \ref{0603-1}. Assume that $\bbU_1$ and $\bbU_2$ are two isometries with dimensions $N\times N_1$ and $N\times (N-N_2)$, respectively such that $N_1\leq N_2$, $\bbU_1^T\bbU_1=\bbI_{N_1}$, $\bbU_2^T\bbU_2=\bbI_{N-N_2}$ and $\bbU_1^T\bbU_2=0$. Moreover, $\bbU_1$ and $\bbU_2$ (random or non-random) are independent of $\bbX$ and with probability tending to one, $rank(\bbU_1)=N_1$ and $rank(\bbU_2)=N-N_2$. The notation ``0'' may indicate a zero value, a zero vector or a zero matrix in this paper, changing from line to line.


\begin{cond}\label{0603-1} A matrix $\bbX=(X_{ij})_{M_1\times N}$ satisfies Condition \ref{0603-1} if its entries $X_{ij}$ are independent (but not necessarily identically distributed) with all moments being finite and
\begin{eqnarray}\label{20150111}
\mathbb{E} X_{ij}=0,\ \ \mathbb{E} X_{ij}^2=\mathbb{E} X_{it}^2, \ \  1\le i\le M_1,\ \ 1\le j,t\le N.
\end{eqnarray}


\begin{rmk}\label{0406.7}
Note that the matrix $\bbT$ does not influence the largest eigenvalue of $\bbom$ and it can be any positive definite matrix. Indeed, let $\bbom_x=(\bbX\bbU_2\bbU_2^T\bbX^T)^{-1}\bbX\bbU_1\bbU_1^T\bbX^T$. One can easily observe that $\bbom$ and $\bbom_x$ share the same largest eigenvalue by the fact that $AB$ and $BA$ share the same nonzero eigenvalues.
\end{rmk}

\end{cond}

Before stating the main result we now make some comments about the relation between the matrix model $\bbom$ and the existing models in the literature.  First, as stated in the introduction, if the matrix $\bbZ$ is generated from Gaussian distribution, then the two terms $(\bbZ\bbU_2\bbU_2^T\bbZ^T)$ and $(\bbZ\bbU_1\bbU_1^T\bbZ^T)$ in $\bbom$ can be considered as independent terms with normal entries, which reduces to the matrix introduced  in the seminar work \cite{J08}. Secondly, we would like to point out that the matrix $\bbom$ not only covers the $F$-matrix model studied in \cite{WY}, but also generalizes it to the nonzero mean value case. To see this, choose $$\bbZ=(\bbY_{M_1\times n_1},\bbW_{M_1\times n_2}),\quad \bbU_2=\begin{pmatrix}\bbzero\\ \mathcal{P}_2\end{pmatrix}, \quad \bbU_1=\begin{pmatrix}\mathcal{P}_1\\ \bbzero\end{pmatrix}$$ with appropriate dimensions, respectively. Let $$\mathcal{P}_2\mathcal{P}_2^T=\bbI_{n_2}-\frac{1}{n_2}\bbone_{n_2}\bbone_{n_2}^T, \quad \mathcal{P}_1\mathcal{P}_1^T=\bbI_{n_1}-\frac{1}{n_1}\bbone_{n_1}\bbone_{n_1}^T, $$$$\mathcal{P}_2^T\mathcal{P}_2=\bbU_2^T\bbU_2=\bbI_{N-N_2}, \quad \mathcal{P}_1^T\mathcal{P}_1=\bbU_1^T\bbU_1=\bbI_{N_1},$$ where ``$\bbone_{n_i}$'' indicates an $n_i$-dimensional column vector with all entries being one ($i=1,2$). Then
\begin{eqnarray*}
\bbom&=&(\bbZ\bbU_2\bbU_2^T\bbZ^T)^{-1}\bbZ\bbU_1\bbU_1^T\bbZ^T=
(\bbW\mathcal{P}_2\mathcal{P}_2^T\bbW^T)^{-1}\bbY\mathcal{P}_1\mathcal{P}_1^T\bbY^T\\
&=&\left[\bbW(\bbI_{n_2}-\frac{1}{n_2}\bbone_{n_2}\bbone_{n_2}^T)\bbW^T\right]^{-1}
\bbY(\bbI_{n_1}-\frac{1}{n_1}\bbone_{n_1}\bbone_{n_1}^T)\bbY^T.
\end{eqnarray*}
Noticing that the data matrices $\bbW$  and $\bbY$ are centralized in  $\bbom$, we thus extend the results of $F$-matrix under the assumption of zero mean values in \cite{WY} to the nonzero mean vectors.
Finally,  by assigning other forms to $\bbU_1$ and $\bbU_2$ (either random or non-random), the matrix $\bbom$ can be used in various applications including centered and non-centered CCA, see Sections \ref{cca}-\ref{mul}. 

We now state the limiting distribution  for the largest eigenvalue of the unified matrix $\bbom$. 

\begin{thm}\label{1016-1}
Consider the matrix $\bbom$ defined in (\ref{1016.3}). Suppose that $\bbT$ is any positive definite matrix and $\bbX$ satisfies  Condition \ref{0603-1}. Suppose that $\liminf\limits_{N\rightarrow \infty}\frac{N}{M_1+N_2}>1$,  $N_1\leq N_2$, $\frac{N_1}{N_2}$ and $\frac{M_1}{N-N_2}$ are both bounded away from $0$,
 and $\frac{N_1}{M_1}$ is bounded away from $0$ and $\infty$. Denote the largest eigenvalue of  $\bbom$  by $\lambda_1$. Then there exist $\mu_N$ and $\sigma_N$ such that
\begin{equation}\label{h0417.1}
\lim_{N \rightarrow \infty} P(\sigma_{N} N_1^{2/3} (\lambda_{1}-\mu_{N}) \leq s)=F_1(s),
\end{equation}
where $F_1(s)$ is the Type-1 Tracy-Widom distribution. 
Moreover, the mean
$\mu_N$ and variance $\sigma_N$ can be decided as follows. 
Suppose that $c_{N} \in [0, (1-\sqrt{\frac{M_1}{N-N_2}})^2]$ satisfies the equation
\begin{equation}\label{2.7}
\int_{- \infty}^{+\infty} (\frac{c_{N}}{\lambda-c_{N}})^2 dF(\lambda)=\frac{N_1}{M_1},
\end{equation}
where $F(\lambda)$ is the limit spectral density (LSD) of $(\bbX\bbU_2\bbU_2^T\bbX^T)^{-1}$.
Then
\begin{equation}\label{2.8}
\mu_{N}=\frac{1}{c_{N}}(1+\frac{M_1}{N_1}\int_{- \infty}^{+\infty} (\frac{c_{N}}{\lambda-c_{N}}) dF(\lambda))
\end{equation}
and
\begin{equation}\label{2.9}
\frac{1}{\sigma_{N}^3}=\frac{1}{c_{N}^3}(1+\frac{M_1}{N_1}\int_{- \infty}^{+\infty} (\frac{c_{N}}{\lambda-c_{N}})^3 dF(\lambda)).
\end{equation}
\end{thm}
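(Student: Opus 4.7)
The plan is to establish (\ref{h0417.1}) in three stages: a structural reduction, a Gaussian warm-up, and a universality step that transfers the Gaussian limit to general entries. First I would invoke Remark \ref{0406.7} to assume without loss of generality that $\bbT=\bbI_{M_1}$ and work with $\bbom_x=(\bbX\bbU_2\bbU_2^T\bbX^T)^{-1}\bbX\bbU_1\bbU_1^T\bbX^T$. Since $\bbU_1,\bbU_2$ are independent of $\bbX$, I would condition on them and treat them as deterministic isometries satisfying the stated orthogonality. Extending $[\bbU_1,\bbU_2]$ to an $N\times N$ orthogonal matrix $\bbU=[\bbU_1,\bbU_2,\bbU_3]$ and setting $\bbY_i=\bbX\bbU_i$, the largest eigenvalue $\lambda_1$ of $\bbom$ is the largest solution of $\det(\bbY_1\bbY_1^T-\lambda\,\bbY_2\bbY_2^T)=0$, an F-matrix-type eigenvalue problem in which $\bbY_1$ is $M_1\times N_1$ and $\bbY_2$ is $M_1\times(N-N_2)$ with zero-mean, unit-variance entries that are, crucially, \emph{not} independent across the two blocks when $\bbX$ is non-Gaussian.

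In the second stage I would treat the Gaussian case. When the entries of $\bbX$ are standard Gaussian, rotational invariance forces $\bbY_1$ and $\bbY_2$ to be independent Gaussian matrices with i.i.d.\ entries, so $\bbom_x$ is exactly the double-Wishart/F-matrix of \cite{J08,WY}; hence $\lambda_1$ obeys the Tracy-Widom law (\ref{h0417.1}). The explicit centering $\mu_N$ and scale $\sigma_N$ in (\ref{2.7})--(\ref{2.9}) would be derived from the fixed-point equation for the matrix Stieltjes transform of $\bbom_x$: the parameter $c_N$ marks the right edge of the deterministic equivalent spectrum, (\ref{2.7}) is the classical condition that the derivative of the subordination function vanishes at the edge, and (\ref{2.8})--(\ref{2.9}) are the resulting formulas for the location of the edge and the cube of the inverse edge exponent, read off exactly as in the F-matrix analysis in \cite{WY}.

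The third and main stage is the universality step. I would establish a local law for the resolvent $G(z)=(\bbom_x-z)^{-1}$ in a neighborhood of $\mu_N$ with spectral parameter at scale $N_1^{-2/3+\varepsilon}$, following the scheme of \cite{KY14,BPZ2014a}: derive a self-consistent equation for the trace and diagonal entries of $G$, prove stability of this equation off the spectrum using the assumptions $\liminf N/(M_1+N_2)>1$ and the boundedness of $N_1/N_2$, $M_1/(N-N_2)$, $N_1/M_1$, and control the remainders via high-moment bounds on the off-diagonal Green function entries. This local law yields rigidity of the top eigenvalues at scale $N_1^{-2/3}$. A Lindeberg-type Green function comparison, swapping one entry $X_{ij}$ of $\bbX$ at a time and matching the first two moments while paying the standard fourth-cumulant price, then upgrades the Gaussian Tracy-Widom limit to the general Condition \ref{0603-1}.

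The main obstacle is precisely this Green function comparison: unlike in the usual F-matrix setting, each entry $X_{ij}$ enters both $\bbY_1$ and $\bbY_2$ through $\bbU_1$ and $\bbU_2$, so swapping a single $X_{ij}$ simultaneously perturbs the numerator and the denominator of $\bbom_x$. The resolvent expansion therefore produces cross terms involving rows of both $\bbU_1$ and $\bbU_2$, and one must use the orthogonality $\bbU_1^T\bbU_2=0$ to kill the leading cross contributions and reduce the remaining terms to standard fourth-cumulant expressions. A secondary technical issue is the truncation and rescaling of $X_{ij}$ needed to pass from the finite-moment assumption in Condition \ref{0603-1} to bounded entries with matched moments before applying the swapping argument; this is routine but must be carried out in a way compatible with the bilinear dependence on $\bbU_1,\bbU_2$.
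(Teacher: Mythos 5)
Your overall architecture coincides with the paper's: reduce to $\bbT=\bbI$, observe that in the Gaussian case $\bbX\bbU_1$ and $\bbX\bbU_2$ are independent so that $\bbom$ is exactly the F/double-Wishart matrix of \cite{J08,WY} (which yields the Tracy--Widom limit and the edge formulas (\ref{2.7})--(\ref{2.9})), and then transfer to general entries via a local law plus a Green function comparison in the spirit of \cite{KY14,BPZ2014a,LHY2011}. The gap is in your third stage, which is precisely where the paper's real work lies. You propose to prove the local law directly for $G(z)=(\bbom_x-z)^{-1}$ by ``deriving a self-consistent equation for the trace and diagonal entries of $G$'' and then to run a one-entry Lindeberg swap. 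But $\bbom_x$ is a quartic function of $\bbX$ with an inverse in front, and each $X_{i\mu}$ enters both $\bbX\bbU_1\bbU_1^T\bbX^T$ and $(\bbX\bbU_2\bbU_2^T\bbX^T)^{-1}$; the standard leave-one-out/Schur-complement machinery for sample covariance or F matrices does not apply to this resolvent, and saying that the orthogonality $\bbU_1^T\bbU_2=0$ ``kills the leading cross contributions'' is a hope, not an argument. The paper's way around this is the Wigner-type linearization $\bbH$ in (\ref{1125.1}): by the Schur formula (\ref{0202.1}) its inverse $\bbG$ encodes the resolvent of $\bbU_1^T\bbX^T(\bbX\bbU_2\bbU_2^T\bbX^T)^{-1}\bbX\bbU_1$, is linear in $\bbX$ (so derivatives in $X_{i\mu}$ are tractable), and admits the deterministic equivalent $\Pi(z)$ of (\ref{a17}); the anisotropic local law (Theorem \ref{0817-1}) is then proved for $\bbG$, not for $(\bbom_x-z)^{-1}$. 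Your sketch contains no substitute for this device.

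A second, related gap: you assert that two-moment matching with ``the standard fourth-cumulant price'' suffices. Since Condition \ref{0603-1} does not assume $\mathbb{E}X_{ij}^3=0$, the third-order Taylor terms in any swapping/interpolation scheme do not cancel, and after summing over the $\sim N^2$ entries with weight $N^{-3/2}$ they are borderline: one must extract an extra $N^{-1/2}$ gain from them. The paper handles this in two separate places --- it first proves the local law under three-moment matching and then removes that condition via the proxy matrix $\Pi_1$ in (\ref{0528.5}) together with row-extraction expansions of $\bbG$ and $\Pi$ (Section \ref{h0425-1}), and in the comparison theorem (Theorem \ref{0826-3}) it shows the $r=1$ third-derivative term gains the needed $N^{-1/2}$ from counting occurrences of the extracted row. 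Your proposal does not engage with either of these steps, so as written the universality stage would not close. (Your truncation concern, by contrast, is a non-issue here since all moments are assumed finite.)
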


\begin{rmk}\label{h0424-1}
The LSD of the empirical spectral distribution of $(\bbX\bbU_2\bbU_2^T\bbX^T)$ (equivalent to the sample covariance matrix in the Gaussian case) is the famous Marcenko Pastur distribution. From there one can easily find $F(\lambda)$.
\end{rmk}

\begin{rmk}\label{b16}
When $\bbX$ is a complex random matrix, Theorem \ref{1016-1} still holds but the Tracy-Widom distribution $F_1(s)$ should be replaced by $F_2(s)$. One may refer to \cite{TW1996} for the definitions of $F_i(s),i=1,2$.
\end{rmk}

\begin{rmk}\label{h0424-1} The condition $\bbU_1^T\bbU_2=0$ imposed on the matrices $\bbU_1$ and $\bbU_2$ can be relaxed to $\bbU_1^T\bbU_2=(\bbI_{N_1}, 0)$.
 In fact,
if $\bbU_1^T\bbU_2=(\bbI_{N_1}, 0)$, then we can write $\bbU_2$ as $\bbU_2=(\bbU_1,\bbU_4)$ such that $\bbU_1^T\bbU_4=0$. This is because if we denote $\bbU_2=(\bbU_3, \bbU_4)$, then the relation $\bbU_1^T\bbU_2=(\bbI_{N_1}, 0)$ suggests that $\bbU_1^T\bbU_3=\bbI_{N_1}$, $\bbU_1^T\bbU_4=0$ . Denoting the $i$-th columns of $\bbU_1$ and $\bbU_3$ by $\bbu_{1i}$ and $\bbu_{3i}$ respectively, we have $\bbu_{1i}^T\bbu_{3i}=1$. By the Cauchy-Schwarz inequality, we see that
$$1=\bbu_{1i}^T\bbu_{3i}\le \|\bbu_{1i}\|\|\bbu_{3i}\|=1,$$
which forces $\bbu_{1i}=\bbu_{3i}$ and consequently $\bbU_1=\bbU_3$, $\bbU_2=(\bbU_1,\bbU_4)$ with $\bbU_1^T\bbU_4=0$.
By the arguments above (\ref{0603.1}), the largest eigenvalue of $$(\bbX\bbU_2\bbU_2^T\bbX^T)^{-1}\bbX\bbU_1\bbU_1^T\bbX^T=
(\bbX\bbU_1\bbU_1^T\bbX^T+\bbX\bbU_4\bbU_4^T\bbX^T)^{-1}\bbX\bbU_1\bbU_1^T\bbX^T$$
can be transferred to a function of the largest eigenvalue of  $(\bbX\bbU_4\bbU_4^T\bbX^T)^{-1} \bbX\bbU_1\bbU_1^T\bbX^T$ so that Theorem \ref{1016-1} is applicable. Therefore, one can also work out the asymptotic distribution for the largest eigenvalue of the matrix  $(\bbX\bbU_2\bbU_2^T\bbX^T)^{-1}\bbX\bbU_1\bbU_1^T\bbX^T$ under the condition $\bbU_1^T\bbU_2=(\bbI_{N_1}, 0)$.
\end{rmk}
\begin{rmk}\label{h0531-1}
Theorem \ref{1016-1} can be extended to the joint distribution of the first $k$ largest eigenvalues, i.e.
\begin{eqnarray}\label{h0531.1}
&&\lim_{N \rightarrow \infty} P(\sigma_{N} N_1^{2/3} (\lambda_{1}-\mu_{N}) \leq s_1,..., \sigma_{N} N_1^{2/3} (\lambda_{k}-\mu_{N}) \leq s_k)\non
&&=\lim_{N \rightarrow \infty} P(N_1^{2/3} (\lambda_{1}^{GOE}-2) \leq s_1,..., N_1^{2/3} (\lambda_{k}^{GOE}-2) \leq s_k),
\end{eqnarray}
where $\lambda_{1}^{GOE}\ge...\lambda_{k}^{GOE}$ are the first $k$ largest eigenvalues of $N_1\times N_1$ GOE matrix and $k$ is a finite number independent of $N$.
In fact, such an extension can be accomplished by a discussion parallel to Corollary 3.19 of \cite{KY14} since we show the local behavior of the steitljes transform near the edge (such as Theorem \ref{0817-1}). Here we omit the proof.
\end{rmk}


A pleasant surprise from the simulated results in Section \ref{sim1} is that although our Tracy-Widom approximation is theoretically developed for large dimensions, it keeps accurate for small dimensions regardless of the data distribution, see Table \ref{t1} where even for $M_1=5$, the estimated quantiles are well matched with theoretical ones.

In the next three sections, we propose three applications of this limiting Tracy-Widom distribution for $\lambda_1$.  The first one is our motivation of studying $\bbom$ as stated in the introduction, the high-dimensional independence testing by using canonical correlation analysis. The second one is the MANOVA approach in testing the equivalence of $g$ groups' mean vectors. And the last one is the  unknown parameter matrix testing in the multivariate linear model.

\section{Unified matrix in CCA}\label{cca}

Suppose that we have two sets of random variables, organized into two random vectors $\bbz=(z_1,\cdots,z_{M_1})^T$ and $\bby=(y_1,\cdots,y_{M_2})^T$, with mean vectors and covariance matrices ($\bbmu_z,\Sigma_{\bbz\bbz}$) and ($\bbmu_y,\Sigma_{\bby\bby}$), respectively. For each of them, $N$ observations are measured and the data matrices are denoted as $\bbZ=(\bbz_1,\cdots,\bbz_N)_{M_1\times N}$ and $\bbY=(\bby_1,\cdots,\bby_N)_{M_2\times N}$. We want to test
\begin{equation}\label{0408.1}
H_0: \quad \bbz \ \ \text{and}\ \  \bby \ \ \text{are independent}.
\end{equation}
 As illustrated in the introduction, if $\bbz$ and $\bby$ are independent, the largest eigenvalue $\rho_1^2$ of the matrix $\Sigma_{\bbz\bbz}^{-1}\Sigma_{\bbz\bby}\Sigma_{\bby\bby}^{-1}\Sigma_{\bbz\bby}^T$ should be zero. The corresponding sample version is
 {\small
\begin{eqnarray}\label{y0419.1}
\bbS_{zy}&=&\left(\sum\limits_{i=1}^N(\bbz_i-\bar\bbz)(\bbz_i-\bar\bbz)^T\right)^{-1}
\left(\sum\limits_{i=1}^N(\bbz_i-\bar\bbz)(\bby_i-\bar\bby)^T\right)\\
&\times&\left(\sum\limits_{i=1}^N(\bby_i-\bar\bby)(\bby_i-\bar\bby)^T\right)^{-1}
\left(\sum\limits_{i=1}^N(\bbz_i-\bar\bbz)(\bby_i-\bar\bby)^T\right)^T\nonumber\\
&=&(\bbZ\bbP_N\bbZ^T)^{-1}(\bbZ\bbP_N\bbY^T)(\bbY\bbP_N\bbY^T)^{-1}(\bbZ\bbP_N\bbY^T)^T,
\end{eqnarray}}
where $\bbP_N=\bbI_N-\frac{1}{N}\bbone_N\bbone_N^T$ and $\bbone_{N}$ indicates an $N$-dimensional column vector with all entries being one. Denote the largest eigenvalue of $\bbS_{zy}$ by $\gamma_1^{\bbS}$ and let $\lambda_1^{\bbS}=\frac{\gamma_1^{\bbS}}{1-\gamma_1^{\bbS}}$. Note that $\bbP_N$ is a projection matrix. Then the property of $\lambda_1^{\bbS}$ is a special case of $\lambda_1$ in Theorem \ref{1016-1} by observing that  we can equivalently consider $\lambda_1^{\bbS}$ as the largest eigenvalue of the matrix
$$(\bbZ\bbP_N(I-\bbP_{Ny})\bbP_N\bbZ^T)^{-1}\bbZ\bbP_N\bbP_{Ny}\bbP_N\bbZ^T,$$ where $\bbP_{Ny}=(\bbY\bbP_N)^T(\bbY\bbP_N\bbY^T)^{-1}(\bbY\bbP_N)$. This equivalence has been specified in the introduction, see the derivation of (\ref{0603.1}). It is easy to check that $(\bbP_N(I-\bbP_{Ny})\bbP_N)(\bbP_N\bbP_{Ny}\bbP_N)=0$.
Since both $\bbP_N(I-\bbP_{Ny})\bbP_N$ and $\bbP_N\bbP_{Ny}\bbP_N$ are projection matrices such that $rank(\bbP_N(I-\bbP_{Ny})\bbP_N)=N-M_2$ and $rank(\bbP_N\bbP_{Ny}\bbP_N)=M_2$ with high probability by Lemma \ref{1121-1} in the supplement, we can take $N_1=N_2=M_2$ in Theorem \ref{1016-1} to obtain the following Corollary \ref{0413.1}.

\begin{coro}\label{0413.1}
Suppose that the data matrix $\bbZ$ can be written as $\bbZ=\bbT^{\frac{1}{2}}\bbX+\bbmu_z\bbone_N^T$ for some positive definite matrix $\bbT$ and the matrix $\bbX_{M_1\times N}$ satisfies Condition \ref{0603-1}. We do not impose any condition on the random vector $\bby$. Here $\bbmu_z$ is the mean vector of $\bbz$ and can be any possible value. Assume that $\liminf\limits_{N\rightarrow \infty}\frac{N}{M_1+M_2}>1$, $\frac{M_1}{N-M_2}$ is bounded away from $0$ and $\frac{M_2}{M_1}$ is bounded away from $0$ and $\infty$. Denote the largest eigenvalue of $\bbS_{zy}$ by $\gamma_1^{\bbS}$ and let $\lambda_1^{\bbS}=\frac{\gamma_1^{\bbS}}{1-\gamma_1^{\bbS}}$. Then under the null hypothesis (\ref{0408.1}), there exist $\mu_N$ and $\sigma_N$ such that
$$\lim_{N \rightarrow \infty} P(\sigma_{N} M_2^{2/3} (\lambda_{1}^{\bbS}-\mu_{N}) \leq s)=F_1(s),$$
where $F_1(s)$ is the Type-1 Tracy-Widom distribution.
Denote the LSD of $(\bbX\bbP_N(I-\bbP_{Ny})\bbP_N\bbX^T)^{-1}$ by $F(\lambda)$ and suppose that $c_{N} \in [0, (1-\sqrt{\frac{M_1}{N-N_1}})^2)$. Then the mean $\mu_N$ and the variance $\sigma_N$ can be decided in
the same way as in Theorem \ref{0603-1} by replacing $N_1$ and $N_2$ with $M_2$.
\end{coro}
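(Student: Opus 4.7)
The plan is to reduce the corollary to Theorem \ref{1016-1} by identifying the centered CCA matrix as an instance of $\bbom$. First I would absorb the unknown mean $\bbmu_z$. Since $\bbP_N\bbone_N=0$, the product $\bbZ\bbP_N=(\bbT^{1/2}\bbX+\bbmu_z\bbone_N^T)\bbP_N=\bbT^{1/2}\bbX\bbP_N$ is free of $\bbmu_z$, so $\bbS_{zy}$ and hence $\gamma_1^{\bbS}$ (and $\lambda_1^{\bbS}$) depend on the data only through $\bbT^{1/2}\bbX$, $\bbY$, and $\bbP_N$. By Remark \ref{0406.7}, the largest eigenvalue is also invariant under the $\bbT^{1/2}$ similarity, so we may replace $\bbT^{1/2}\bbX$ by $\bbX$ throughout.

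Next I would translate $\lambda_1^{\bbS}$ into the $\bbom$-form. If $\gamma$ is an eigenvalue of $\bbS_{zy}$, then $\det(\bbX\bbP_N\bbP_{Ny}\bbP_N\bbX^T-\gamma\,\bbX\bbP_N\bbX^T)=0$; writing $\bbP_N=\bbP_N\bbP_{Ny}\bbP_N+\bbP_N(\bbI-\bbP_{Ny})\bbP_N$ and dividing by $1-\gamma$ shows that $\lambda=\gamma/(1-\gamma)$ satisfies
\begin{equation*}
\det\bigl(\bbX\bbP_N\bbP_{Ny}\bbP_N\bbX^T-\lambda\,\bbX\bbP_N(\bbI-\bbP_{Ny})\bbP_N\bbX^T\bigr)=0.
\end{equation*}
Thus $\lambda_1^{\bbS}$ is the top eigenvalue of $(\bbX\bbP_N(\bbI-\bbP_{Ny})\bbP_N\bbX^T)^{-1}\bbX\bbP_N\bbP_{Ny}\bbP_N\bbX^T$, which is exactly $\bbom$ with $\bbU_1\bbU_1^T=\bbP_N\bbP_{Ny}\bbP_N$ and $\bbU_2\bbU_2^T=\bbP_N(\bbI-\bbP_{Ny})\bbP_N$.

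I would then verify the hypotheses of Theorem \ref{1016-1}. A short computation using $\bbP_N\bbP_{Ny}=\bbP_{Ny}\bbP_N=\bbP_{Ny}$ (the range of $\bbP_{Ny}$ lies in the range of $\bbP_N$ because $\bbone_N^T\bbP_N=0$) shows both $\bbP_N\bbP_{Ny}\bbP_N$ and $\bbP_N(\bbI-\bbP_{Ny})\bbP_N$ are orthogonal projections with product zero. I would then diagonalize each projection and pick $\bbU_1,\bbU_2$ to be isometries whose columns span their respective ranges; orthogonality of the two ranges gives $\bbU_1^T\bbU_2=0$ exactly, and the rank computation from Lemma \ref{1121-1} identifies $N_1=M_2$ and $N-N_2=N-M_2$ with probability tending to one. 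Under the null hypothesis (\ref{0408.1}), $\bby$ is independent of $\bbz$ and hence of $\bbX$, so $\bbU_1$ and $\bbU_2$ (being measurable functions of $\bbY$ and $\bbP_N$) are independent of $\bbX$, as required. The dimensional assumptions in the corollary translate directly into the asymptotic regime required in Theorem \ref{1016-1} with $N_1=N_2=M_2$.

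Finally, applying Theorem \ref{1016-1} yields the Tracy-Widom limit with the stated $\mu_N$ and $\sigma_N$, where $F(\lambda)$ is the LSD of $(\bbX\bbP_N(\bbI-\bbP_{Ny})\bbP_N\bbX^T)^{-1}$. I expect the main obstacle to be the handling of the small-rank discrepancy caused by the centering (the projection $\bbP_N$ reduces rank by one), which I would absorb into the $o(1)$ error inherent in the Tracy-Widom scaling; a secondary subtlety is justifying the independence of $\bbU_1,\bbU_2$ from $\bbX$ on the high-probability rank event, which can be handled by conditioning on $\bbY$ and invoking Theorem \ref{1016-1} pathwise, since the conclusion is an almost sure statement in the auxiliary randomness.
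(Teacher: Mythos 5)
Your argument is essentially the paper's own derivation: the paper likewise absorbs $\bbmu_z$ via $\bbmu_z\bbone_N^T\bbP_N=0$ (Remark \ref{y0424.1}), discards $\bbT^{1/2}$ by Remark \ref{0406.7}, rewrites $\lambda_1^{\bbS}$ as the top eigenvalue of $(\bbZ\bbP_N(\bbI-\bbP_{Ny})\bbP_N\bbZ^T)^{-1}\bbZ\bbP_N\bbP_{Ny}\bbP_N\bbZ^T$, checks that the two projections are orthogonal with ranks controlled by Lemma \ref{1121-1}, and then invokes Theorem \ref{1016-1} with $N_1=N_2=M_2$. Your closing remark about the rank-one loss from the centering projection $\bbP_N$ is a sensible extra precaution (the paper silently takes $\mathrm{rank}(\bbP_N(\bbI-\bbP_{Ny})\bbP_N)=N-M_2$ rather than $N-1-M_2$), and it does not change the asymptotics, so the proposal is correct and matches the paper's approach.
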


According to Corollary \ref{0413.1}, we suggest to use $\lambda_1^{\bbS}$ for the hypothesis testing (\ref{0408.1}) by comparing the rescaled $\lambda_1^{\bbS}$ value with the theoretical critical point obtained from the Type-1 Tracy-Widom distribution. One can also refer to the numerical studies in Section \ref{sim2}.

\begin{rmk}\label{y0424.1}
One may notice that there is an additional term $\bbmu_z\bbone_N^T$ in the expression of $\bbZ$ in Corollary \ref{0413.1} compared with the one in Theorem \ref{1016-1}. This allows the mean vectors to be any possible values. We would like to point out that this mean vector does not influence the analysis of $\lambda_1^{\bbS}$ due to the observation that $\bbmu_z\bbone_N^T\bbP_N=0$.
\end{rmk}


\begin{rmk}
For the Tracy-Widom distribution in Corollary \ref{0413.1}, a similar result can be concluded if we assume that the data matrix $\bbY=\bbT^{\frac{1}{2}}\bbX+\bbmu_y\bbone_N^T$ for some positive definite matrix $\bbT$ instead and $\bbmu_y$ is the mean vector of $\bby$. In this case, no condition is imposed on the random vector $\bbz$. And we only need to exchange the roles of $M_1$ and $M_2$ in the conclusions of Corollary \ref{0413.1}. This is easy to see according to the fact that the largest eigenvalue of $\bbS_{zy}$ does not change if the roles of $\bbZ$ and $\bbY$ are exchanged in (\ref{y0419.1}).
\end{rmk}

 \begin{rmk}
For the case $N< M_1+M_2$, it is trivial that $\gamma_1^{\bbS}\equiv 1$ and $\lambda_1^{\bbS}=+\infty$.
\end{rmk}

\section{Unified matrix in multivariate analysis of variance (MANOVA)}\label{manova}
Suppose that we have $g$ populations. Let $n_i$ samples ($\bbx_{i1},\cdots, \bbx_{in_i}$) be  available from the $i$th population with mean vector $\bbmu_i$ ($p$-dimensional) and common covariance matrix $\Sigma$ ($i=1,\cdots, g$). The total sample size is denoted by $n=\sum_{i=1}^g n_i$. One frequently discussed problem in multivariate analysis is to investigate whether the $g$ groups have the same mean vector. i.e.
\begin{equation}\label{0409.1}
H_0:\quad \bbmu_1=\cdots=\bbmu_g.
\end{equation}
The MANOVA approach is well-known for this testing problem. Two main SSCPs,  the between SSCP $\bbB$ and the within SSCP $\bbW$ are constructed as
\[
\bbB=\sum_{i=1}^g n_i(\bar\bbx_i-\bar\bbx)(\bar\bbx_i-\bar\bbx)^T,\quad
\bbW=\sum_{i=1}^g\sum_{j=1}^{n_i} (\bbx_{ij}-\bar\bbx_i)(\bbx_{ij}-\bar\bbx_i)^T,
\]
where $\bar\bbx_i=\frac{1}{n_i}\sum\limits_{j=1}^{n_i} \bbx_{ij}$ is the $i$-th group sample mean and $\bar\bbx=\frac{1}{n}\sum\limits_{i=1}^g\sum\limits_{j=1}^{n_i} \bbx_{ij}=\sum\limits_{i=1}^g\frac{n_i}{n}\bar\bbx_i$ is the overall sample mean. The classical testing methods for (\ref{0409.1}) are based on the eigenvalues of the matrix $\bbV=\bbW^{-1}\bbB$.  We can show that under the null hypothesis (\ref{0409.1}), the matrix $\bbV$ can be written as a special form of $\bbom$ in Section \ref{ccagenr} and thus the limiting distribution of its largest eigenvalue $\lambda_1^{\bbV}$ follows from Theorem \ref{0603-1}.
%



To see this, denote
$\bbX_i=(\bbx_{i1},\cdots,\bbx_{in_i})^T$ (of size $n_i\times p$). Note that under the null hypothesis (\ref{0409.1}), the common mean vector does not influence the matrix $\bbV$. Then without loss of generality, we can simply assume that $\bbmu_1=\cdots=\bbmu_g=0$ under $H_0$.   In this section, we use $i$ to denote the $i$th group ($i=1,\cdots,g$) and use $j$ to denote the $j$th observation from the $i$th group ($j=1,\cdots, n_i$). For each $\bbX_i$, let $\bbH_i$ be an $n_i\times n_i$ orthogonal matrix with the first column being $\frac{1}{\sqrt{n_i}}\bbone_{n_i}$. Here $\bbone_{n_i}$ indicates an $n_i$-dimensional column vector with all entries being one. The matrix $\bbI_{n_i}$ indicates an $n_i\times n_i$ identity matrix, $\bbU_{i1}$ indicates the first column of $\bbI_{n_i}$ and $\bbU_{i2}$ indicates the remaining $n_i\times (n_i-1)$ block of $\bbI_{n_i}$. An intuitive example for easy understanding when $n_1=3$ is
\[
\bbI_{n_1}=\begin{pmatrix}1&0&0\\0&1&0\\0&0&1\end{pmatrix}, \quad \bbU_{11}=\begin{pmatrix}1\\0\\0\end{pmatrix},\quad \text{and}\quad
\bbU_{12}=\begin{pmatrix}0&0\\1&0\\0&1\end{pmatrix}.
\]
Arrange these $\bbU_{i1}$'s as blocks placed on the diagonal of a block matrix $\bbU_1$ and $\bbU_{i2}$'s as blocks placed on the diagonal of another block matrix $\bbU_2$, i.e.
{\footnotesize
\begin{equation}\label{0412.1}
\bbU_1 =
 \begin{pmatrix}
  \underset{(n_1\times 1)}{\bbU_{11}} &  &  &  \\
   & \underset{(n_2\times 1)}{\bbU_{21}} &  &  \\
   & & \ddots &   \\
   &  &  & \underset{(n_g\times 1)}{\bbU_{g1}}
 \end{pmatrix}_{n\times g},\quad
 \bbU_2 =
 \begin{pmatrix}
  \underset{(n_1\times (n_1-1))}{\bbU_{12}} &  &  &  \\
   & \underset{(n_2\times (n_2-1))}{\bbU_{22}} &  &  \\
   & & \ddots &   \\
   &  &  & \underset{(n_g\times (n_g-1))}{\bbU_{g2}}
 \end{pmatrix}_{n\times (n-g)}.
\end{equation}}
Consider the orthogonal transformations $\bbZ_i=(\bbz_{i1},\bbz_{i2},\cdots, \bbz_{in_i})^T=\bbH_i^T\bbX_i$ (of size $n_i\times p$). It is easy to find that $\bbz_{i1}=\sqrt{n_i}\bar\bbx_i$.
Furthermore, denote $\bba_g=(\sqrt{\frac{n_1}{n}},\cdots,\sqrt{\frac{n_g}{n}})^T$,  $\bbP_g=\bbI_g-\bba_g\bba_g^T$ and $\bbZ=({\bbZ}_1^T, {\bbZ}_2^T,\cdots,{\bbZ}_g^T)_{p\times n}$.
Considering the relationship $\sqrt{n}\bar\bbx=(\bbz_{11},\cdots,\bbz_{g1})\bba_g$,
we can obtain
\begin{eqnarray}\label{y0424.2}
\bbB&=&\sum_{i=1}^g n_i(\bar\bbx_i-\bar\bbx)(\bar\bbx_i-\bar\bbx)^T=\sum_{i=1}^g n_i\bar\bbx_i\bar\bbx_i^T-\sqrt{n}\bar\bbx\cdot \sqrt{n}\bar\bbx\nonumber\\
&=&(\bbz_{11},\cdots,\bbz_{g1})(\bbI_g-\bba_g\bba_g^T)(\bbz_{11},\cdots,\bbz_{g1})^T=\bbZ\bbU_1\bbP_g\bbU_1^T\bbZ^T
=\bbZ\widetilde{\bbU}_1\widetilde{\bbU}_1^T\bbZ^T,\nonumber\\
\bbW&=&\sum_{i=1}^g\sum_{j=1}^{n_i} (\bbx_{ij}-\bar\bbx_i)(\bbx_{ij}-\bar\bbx_i)^T
=\sum_{i=1}^g(\sum_{j=1}^{n_i}\bbx_{ij}\bbx_{ij}^T-n_i\bar\bbx_i\bar\bbx_i^T)=\sum_{i=1}^g(\bbX_i^T\bbX_i-\bbz_{i1}\bbz_{i1}^T)\nonumber\\
&=&\sum_{j=2}^{n_1} \bbz_{1j}\bbz_{1j}^T+\sum_{j=2}^{n_2} \bbz_{2j}\bbz_{2j}^T+\cdots+\sum_{j=2}^{n_g} \bbz_{gj}\bbz_{gj}^T=\bbZ\bbU_2\bbU_2^T\bbZ^T,
\end{eqnarray}
where $\widetilde{\bbU}_1=\bbU_1\bbP_g$ and $\mathbb{E}(\bbZ)=0$ under $H_0$. According to the construction of $\bbU_1$ and $\bbU_2$ in  (\ref{0412.1}), we can easily conclude that $\widetilde{\bbU}_1^T\bbU_2=0$.
Then the limiting distribution of the largest eigenvalue $\lambda_1^{\bbV}$ of $$\bbV=\bbW^{-1}\bbB=(\bbZ\bbU_2\bbU_2^T\bbZ^T)^{-1}\bbZ\widetilde{\bbU}_1\widetilde{\bbU}_1^T\bbZ^T$$ can follow from Theorem \ref{1016-1} by assigning $M_1=p$, $N_1=g-1$ and $N_2=g$ since $rank(\widetilde{\bbU}_1)=g-1, rank(\bbU_2)=n-g$. See the following Corollary \ref{0412.2}.

 \begin{coro}\label{0412.2}
 Consider the multivariate mean vectors' hypothesis testing problem in (\ref{0409.1}). We use the largest eigenvalue $\lambda_1^{\bbV}$ of the matrix $\bbV=\bbW^{-1}\bbB$ as the test criterion.  Under the null hypothesis, suppose that $\bbZ$ can be written as $\bbZ=\bbT^{\frac{1}{2}}\bbX$ for some positive definite matrix $\bbT_{p\times p}$ and the matrix $\bbX_{p\times n}$ satisfies  Condition \ref{0603-1}. Assume that $\liminf\limits_{n\rightarrow \infty}\frac{n}{p+g}>1$ and $\frac{g-1}{p}$ is bounded away from $0$ and $\infty$.  Then there exist $\mu_n$ and $\sigma_n$ such that
$$\lim_{n \rightarrow \infty} P(\sigma_{n} (g-1)^{2/3} (\lambda_{1}^{\bbV}-\mu_{n}) \leq s)=F_1(s),$$
where $F_1(s)$ is the Type-1 Tracy-Widom distribution.
The mean
$\mu_n$ and the variance $\sigma_n$ can be decided in the same way as in Theorem \ref{1016-1} by replacing $M_1$ with $p$ and $N_1$ with $(g-1)$.
%
 \end{coro}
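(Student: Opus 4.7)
My plan is to recast $\bbV = \bbW^{-1}\bbB$ exactly as an instance of the canonical matrix $\bbom$ of Theorem \ref{1016-1} and then verify its hypotheses. The decomposition $\bbV = (\bbZ\bbU_2\bbU_2^T\bbZ^T)^{-1}\bbZ\widetilde{\bbU}_1\widetilde{\bbU}_1^T\bbZ^T$ with $\widetilde{\bbU}_1 = \bbU_1\bbP_g$ is already in hand from (\ref{y0424.2}). Under $H_0$ the common mean drops out of both $\bbB$ and $\bbW$, so I may take $\bbmu = 0$; since each $\bbH_i$ is a deterministic orthogonal matrix, the composite $\bbZ = (\bbZ_1^T,\ldots,\bbZ_g^T)^T$ still has the form $\bbT^{1/2}\bbX$ with $\bbX$ obeying Condition \ref{0603-1}. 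These preliminaries are routine.

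The main obstacle is that $\widetilde{\bbU}_1$ is not itself an isometry: since $\bbP_g = \bbI_g - \bba_g\bba_g^T$ is a rank-$(g-1)$ orthogonal projection, $\widetilde{\bbU}_1^T\widetilde{\bbU}_1 = \bbP_g \neq \bbI_{g-1}$, so it cannot be fed directly into Theorem \ref{1016-1}. I would resolve this by factoring $\bbP_g = \bbQ\bbQ^T$ with $\bbQ \in \mathbb{R}^{g\times(g-1)}$ an orthonormal basis of the range of $\bbP_g$ (so $\bbQ^T\bbQ = \bbI_{g-1}$), and setting $\bbU_1^{\star} := \bbU_1\bbQ$. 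Because the blocks $\bbU_{i1},\bbU_{i2}$ are complementary columns of $\bbI_{n_i}$, we have $\bbU_1^T\bbU_1 = \bbI_g$ and $\bbU_1^T\bbU_2 = 0$, hence
$$(\bbU_1^{\star})^T\bbU_1^{\star} = \bbI_{g-1}, \quad (\bbU_1^{\star})^T\bbU_2 = 0, \quad \bbU_1^{\star}(\bbU_1^{\star})^T = \widetilde{\bbU}_1\widetilde{\bbU}_1^T.$$
Substituting yields $\bbV = (\bbZ\bbU_2\bbU_2^T\bbZ^T)^{-1}\bbZ\bbU_1^{\star}(\bbU_1^{\star})^T\bbZ^T$, exactly of the form $\bbom$ with $M_1 = p$, $N = n$, $N_1 = g-1$, $N - N_2 = n-g$.

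To finish, I would check the dimensional hypotheses of Theorem \ref{1016-1}: $N_1 \leq N_2$ reads $g-1 \leq g$; $\liminf N/(M_1+N_2) > 1$ is the assumed $\liminf n/(p+g) > 1$; $N_1/N_2 = (g-1)/g \geq 1/2$ is bounded away from $0$; $N_1/M_1 = (g-1)/p$ is bounded away from $0$ and $\infty$ by hypothesis; and $M_1/(N-N_2) = p/(n-g)$ is bounded away from $0$ under the comparable-dimension regime implicit in the statement (with $n,p,g$ of the same order, which is consistent with $(g-1)/p$ bounded and the proportional-growth setting). Since $\bbU_1^{\star},\bbU_2$ are deterministic and thus independent of $\bbX$, a direct invocation of Theorem \ref{1016-1} produces the Tracy-Widom limit with mean and variance given by (\ref{2.8})--(\ref{2.9}), $M_1$ replaced by $p$ and $N_1$ by $g-1$. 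No probabilistic heavy lifting is needed beyond the algebraic isometry reduction; the result is essentially a structural corollary of Theorem \ref{1016-1}.
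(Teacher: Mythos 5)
Your proposal is correct and follows essentially the same route as the paper: rewrite $\bbB$ and $\bbW$ via (\ref{y0424.2}) as $\bbZ\widetilde{\bbU}_1\widetilde{\bbU}_1^T\bbZ^T$ and $\bbZ\bbU_2\bbU_2^T\bbZ^T$, use $\widetilde{\bbU}_1^T\bbU_2=0$ with $\mathrm{rank}(\widetilde{\bbU}_1)=g-1$ and $\mathrm{rank}(\bbU_2)=n-g$, and invoke Theorem \ref{1016-1} with $M_1=p$, $N_1=g-1$, $N_2=g$. Your explicit factorization $\bbP_g=\bbQ\bbQ^T$, replacing $\widetilde{\bbU}_1$ by a genuine isometry $\bbU_1\bbQ$, merely makes precise the rank observation the paper leaves implicit, so the two arguments coincide in substance.
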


According to Corollary \ref{0412.2}, if the rescaled $\lambda_1^{\bbV}$ value is smaller than the theoretical critical point obtained from  Type-1 Tracy-Widom distribution, we fail to reject the null hypothesis (\ref{0409.1}), i.e. we do not reject that the $g$ groups share the same mean vector. Otherwise, reject $H_0$.
In the simulation studies of Section \ref{sim3}, regarding the pattern of different mean vectors under the alternative, we consider two cases. One is the dense but weak alternative (DWA),  which means that there are many different entries among the mean vectors, but these differences are faint, see the setting $H_1^{(1)}$ and $H_1^{(1)'}$ in Section \ref{sim3} {\bf (1)}. The other one  is the sparse but strong
alternative (SSA),  which means that the differences are rare, but significant where they
appear, see the alternative $H_1^{(2)}$, where the differences only appear in one out of $p$ components. The numerical results in Table \ref{t3} indicate that this $\lambda_1^{\bbV}$ shows satisfactory performance for both alternatives.

\begin{rmk}
If we assume that all the observations come from multivariate normal distribution as in the classical setting, then the positive definite matrix $\bbT$ in Corollary \ref{0412.2} obviously exists by choosing $\bbT=\Sigma$. This is due to the fact that we can write each $\bbX_i$ as $\bbX_i^T=\Sigma^{\frac{1}{2}}\widetilde{\bbX}_i=\bbT^{\frac{1}{2}}\widetilde{\bbX}_i$ and the entries of $\widetilde{\bbX}_i$ are i.i.d $N(0,1)$. Then
\begin{eqnarray*}
\bbZ=({\bbZ}_1^T, {\bbZ}_2^T,\cdots,{\bbZ}_g^T)=(\bbX_1^T\bbH_1,\bbX_2^T\bbH_2,\cdots,\bbX_g^T\bbH_g)
=\bbT^{\frac{1}{2}}(\widetilde{\bbX}_1\bbH_1,\widetilde{\bbX}_2\bbH_2,\cdots,\widetilde{\bbX}_g\bbH_g)\coloneqq \bbT^{\frac{1}{2}}\bbX,
\end{eqnarray*}
where $\bbX=(\widetilde{\bbX}_1\bbH_1,\widetilde{\bbX}_2\bbH_2,\cdots,\widetilde{\bbX}_g\bbH_g)_{p\times n}$ satisfies Condition \ref{0603-1}, taking into account the orthogonality of each $\bbH_i$ and the independence among each $\widetilde{\bbX}_i$.

\end{rmk}

\section{Unified matrix in high-dimensional multivariate linear model}\label{mul}
 In this section, we investigate one more application of the unified matrix $\bbom$ in the multivariate linear model. Let us consider a linear relationship between $p_2$ response variables $y_1,\cdots,y_{p_2}$ and  $p_1$ explanatory variables $x_1,\cdots, x_{p_1}$. Suppose that there are $N$ observations, organized into two data matrices:
 \[
 \bbY=\begin{pmatrix}
 \bbY_1^T\\
 \vdots\\
 \bbY_N^T
 \end{pmatrix}_{N\times p_2},\quad
 \bbX=\begin{pmatrix}
 \bbX_1^T\\
 \vdots\\
 \bbX_N^T
 \end{pmatrix}_{N\times p_1}.
 \]
 Then the multivariate linear model assumes that
 \begin{equation}\label{0403.1}
\bbY=\bbX\bbB+\bbZ,
 \end{equation}
 where $\bbB$ is a $p_1\times p_2$ unknown parameter matrix and $\bbZ$ is a $N\times p_2$ error matrix with the assumption that the rows of $\bbZ$ are independent having mean zero and common covariance matrix $\Sigma$. We first consider the linear hypothesis testing of the form
 \begin{equation}\label{0406.3}
 H_0:\quad \bbC_1\bbB=\bbgam_1,
 \end{equation}
 where $\bbC_1$ is a $g_1\times p_1$ known matrix of rank $g_1$ and $\bbgam_1$ is a $g_1\times p_2$ known matrix of rank $\min\{g_1,p_2\}$. As an example, in the simulation studies of Section \ref{sim4}, if we select $\bbC_1=\bbC_1^{(b)}=[\bbI_{g_1},\bbzero]$ and $\bbgam_1=\bbgam_1^{(a)}=0$, then the testing problem (\ref{0406.3}) reduces to analyzing whether the first $g_1$ rows of  $\bbB$ equal to zeros.

The initial step in conducting the linear hypothesis testing (\ref{0406.3}) is to estimate the unknown parameter matrix $\bbB$. As stated in Section \ref{ccagenr}, our proposed Tracy-Widom distribution performs well when the dimensions are small so that we can simply apply the classic least square estimator for $\bbB$, which is well-known to be
$
\hat\bbB=(\bbX^T\bbX)^{-1}\bbX^T\bbY.
$
The hypothesis SSCP for testing (\ref{0406.3}) is given by
$
\bbH_1=( \bbC_1\hat\bbB-\bbgam_1)^T[\bbC_1(\bbX^T\bbX)^{-1}\bbC_1^T]^{-1}( \bbC_1\hat\bbB-\bbgam_1)
$
and the error  SSCP is
$
\bbE_1=\bbY^T[\bbI-\bbX(\bbX^T\bbX)^{-1}\bbX^T]\bbY
$.  One can refer to chapter 7 of \cite{YVR2009} for detailed derivations. Under the null hypothesis (\ref{0406.3}), $\bbH_1$ and $\bbE_1$ can be further rewritten as
\begin{eqnarray}\label{0408.2}
\bbH_1&=&[\bbC_1(\bbX^T\bbX)^{-1}\bbX^T\bbZ]^T[\bbC_1(\bbX^T\bbX)^{-1}\bbC_1^T]^{-1}[\bbC_1(\bbX^T\bbX)^{-1}\bbX^T\bbZ]
=\bbZ^T\bbP_{\widetilde{\bbX}}\bbZ,\nonumber\\
\bbE_1&=&(\bbX\bbB+\bbZ)^T[\bbI-\bbX(\bbX^T\bbX)^{-1}\bbX^T](\bbX\bbB+\bbZ)=\bbZ^T[\bbI-\bbP_{\bbX}]\bbZ,
\end{eqnarray}
where $\widetilde{\bbX}=\bbX(\bbX^T\bbX)^{-1}\bbC_1^T$, $\bbP_{\widetilde{\bbX}}=\widetilde{\bbX}(\widetilde{\bbX}^T\widetilde{\bbX})^{-1}\widetilde{\bbX}^T$ and $\bbP_{\bbX}=\bbX(\bbX^T\bbX)^{-1}\bbX^T$. It is easy to check that $\bbP_{\widetilde{\bbX}}(\bbI-\bbP_{\bbX})=0$. Denote the largest eigenvalue of
\begin{equation}
\label{b12}
\bbM_1=\bbE_1^{-1}\bbH_1=(\bbZ^T(\bbI-\bbP_{\bbX})\bbZ)^{-1}\bbZ^T\bbP_{\widetilde{\bbX}}\bbZ
\end{equation}
by $\lambda_1^{\bbM_1}$. As stated in Section \ref{cca}, both $\bbI-\bbP_{\bbX}$ and $\bbP_{\widetilde{\bbX}}$ are projection matrices with $rank(\bbI-\bbP_{\bbX})=N-p_1$ and $rank(\bbP_{\widetilde{\bbX}})=g_1$ with high probability. Assuming $N_2=p_1$, $N_1=g_1$ and $M_1=p_2$ in Theorem \ref{1016-1}, we can develop the following corollary for $\lambda_1^{\bbM_1}$.

\begin{coro}\label{0406.6}
Assume that $\bbZ$ in the multivariate linear model (\ref{0403.1}) can be written as $\bbZ=\bbW\bbT^{\frac{1}{2}}$ for some positive definite matrix $\bbT_{p_2\times p_2}$ and the matrix $\bbW_{N\times p_2}$ satisfies Condition \ref{0603-1}. Suppose that $\liminf\limits_{N\rightarrow \infty}\frac{N}{p_2+p_1}>1$, $\frac{g_1}{p_1}$ and $\frac{p_2}{N-p_1}$ are both bounded away from $0$ and $\frac{g_1}{p_2}$ is bounded away from $0$ and $\infty$. Denote the largest eigenvalue of  $\bbM_1=\bbE_1^{-1}\bbH_1$ by $\lambda_1^{\bbM_1}$. Then under the null hypothesis (\ref{0406.3}), there exist $\mu_N$ and $\sigma_N$ such that
$$\lim_{N \rightarrow \infty} P(\sigma_{N} g_1^{2/3} (\lambda_{1}^{\bbM_1}-\mu_{N}) \leq s)=F_1(s),$$
where $F_1(s)$ is the Type-1 Tracy-Widom distribution.
Denote the LSD of $(\bbW^T(\bbI-\bbP_{\bbX})\bbW)^{-1}$ by $F(\lambda)$ and suppose that $c_{N} \in [0, (1-\sqrt{\frac{p_2}{N-g_1}})^2]$. Then
the mean $\mu_N$ and the variance $\sigma_N$ can be decided in the same way as in Theorem \ref{1016-1} by replacing $N_2$ with $p_1$, $N_1$ with $g_1$ and $M_1$ with $p_2$.
\begin{rmk}
One should notice that $\bbZ$ in this Corollary and Corollary \ref{0406.2} corresponds to $\bbZ^T$ in Theorem \ref{1016-1}. To see this, one may compare (\ref{b12}) with (\ref{1016.3}).
\end{rmk}
\end{coro}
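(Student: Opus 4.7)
The plan is to realize the matrix $\bbM_1$ as a special instance of the unified matrix $\bbom$ from Theorem \ref{1016-1} and invoke that theorem directly. First, I would pass to transposes: set $\bbZ' := \bbZ^T = \bbT^{1/2}\bbW^T$, so that $\bbZ'$ is $p_2 \times N$ and plays the role of the $\bbZ$-matrix in Theorem \ref{1016-1}. Then (\ref{b12}) reads
\[
\bbM_1 = (\bbZ'(\bbI-\bbP_{\bbX})(\bbZ')^T)^{-1}\,\bbZ'\,\bbP_{\widetilde{\bbX}}\,(\bbZ')^T,
\]
so we are looking for $\bbU_1,\bbU_2$ with $\bbU_1\bbU_1^T = \bbP_{\widetilde{\bbX}}$ and $\bbU_2\bbU_2^T = \bbI-\bbP_{\bbX}$.

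Next I would construct these isometries. Because $\bbC_1$ has full row rank $g_1 \le p_1$ and $\bbX$ has full column rank $p_1$ with probability tending to one, $\widetilde{\bbX} = \bbX(\bbX^T\bbX)^{-1}\bbC_1^T$ has rank $g_1$ w.h.p.\ (cf.\ Lemma \ref{1121-1}), so $\bbP_{\widetilde{\bbX}}$ is w.h.p.\ an orthogonal projection of rank exactly $g_1$. Choose $\bbU_1$ as any $N\times g_1$ matrix whose columns form an orthonormal basis of the range of $\bbP_{\widetilde{\bbX}}$; then $\bbU_1\bbU_1^T = \bbP_{\widetilde{\bbX}}$ and $\bbU_1^T\bbU_1 = \bbI_{g_1}$. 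Likewise, $\bbI-\bbP_{\bbX}$ is w.h.p.\ a projection of rank $N-p_1$, and one picks $\bbU_2$ of size $N\times(N-p_1)$ with $\bbU_2\bbU_2^T=\bbI-\bbP_{\bbX}$ and $\bbU_2^T\bbU_2=\bbI_{N-p_1}$. The orthogonality $\bbU_1^T\bbU_2 = 0$ follows because $\text{col}(\widetilde{\bbX}) \subseteq \text{col}(\bbX)$ implies $\bbP_{\widetilde{\bbX}}(\bbI-\bbP_{\bbX})=0$, which in turn forces $\bbU_1\bbU_1^T\bbU_2\bbU_2^T=0$ and hence (by left/right multiplying by $\bbU_1^T$ and $\bbU_2$) $\bbU_1^T\bbU_2=0$. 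For the independence hypothesis, note that $\bbU_1,\bbU_2$ are measurable functions of the deterministic (or at least exogenous) design matrix $\bbX$, which is assumed independent of the noise $\bbZ$, and therefore of $\bbW$.

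With the identifications $M_1 \leftarrow p_2$, $N_1 \leftarrow g_1$, $N_2 \leftarrow p_1$, the growth conditions listed in Corollary \ref{0406.6} line up one-for-one with those of Theorem \ref{1016-1}: $\liminf_N N/(p_2+p_1)>1$, $g_1\le p_1$, $g_1/p_1$ and $p_2/(N-p_1)$ bounded away from $0$, and $g_1/p_2$ bounded away from $0$ and $\infty$. Direct application of Theorem \ref{1016-1} then delivers the Type-1 Tracy-Widom limit for $\sigma_N g_1^{2/3}(\lambda_1^{\bbM_1}-\mu_N)$, with $\mu_N$ and $\sigma_N$ given by (\ref{2.8})-(\ref{2.9}) in terms of the LSD $F(\lambda)$ of $(\bbW^T(\bbI-\bbP_{\bbX})\bbW)^{-1}$ (which coincides with the LSD of the corresponding matrix in Theorem \ref{1016-1} by Remark \ref{h0424-1} on the Marcenko-Pastur law).

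The only genuine subtlety I anticipate is a bookkeeping issue rather than a conceptual one: Condition \ref{0603-1} is imposed on $\bbW$ (rows indexed by observations), while the direct invocation of Theorem \ref{1016-1} wants Condition \ref{0603-1} to hold for $\bbW^T$ (rows now indexed by features). One must therefore either (i) argue that in the intended setting the entries of $\bbW$ are independent with a common variance so both row-wise conditions hold simultaneously, or (ii) invoke Remark \ref{0406.7} to absorb any row-dependent variances of $\bbW^T$ into a new positive definite $\bbT'$ that does not affect $\lambda_1^{\bbM_1}$. Either route reduces the corollary cleanly to the statement already proved for $\bbom$, and the remaining LSD computation is standard Marcenko-Pastur.
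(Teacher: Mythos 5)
Your proposal is correct and follows essentially the same route as the paper: under $H_0$ one rewrites $\bbH_1=\bbZ^T\bbP_{\widetilde{\bbX}}\bbZ$ and $\bbE_1=\bbZ^T(\bbI-\bbP_{\bbX})\bbZ$ as in (\ref{0408.2}), takes $\bbU_1\bbU_1^T=\bbP_{\widetilde{\bbX}}$ and $\bbU_2\bbU_2^T=\bbI-\bbP_{\bbX}$ (random, independent of $\bbW$, of ranks $g_1$ and $N-p_1$ with high probability, and mutually orthogonal), and applies Theorem \ref{1016-1} with $M_1=p_2$, $N_1=g_1$, $N_2=p_1$. Your transposition/variance-orientation caveat is resolved exactly as in your option (i), since the rows of $\bbZ$ have common covariance $\Sigma=\bbT$, so the entries of $\bbW$ have unit variance and Condition \ref{0603-1} holds for $\bbW^T$ as required.
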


By Corollary \ref{0406.6}, we can use $\lambda_{1}^{\bbM_1}$ for the linear hypothesis testing (\ref{0406.3}) and reject $H_0$ if the rescaled $\lambda_{1}^{\bbM_1}$ is larger than the theoretical critical point
obtained from Type-1 Tracy-Widom distribution.
 In Section \ref{sim4}, we consider the special testing of whether a certain part of $\bbB$, say $\bbB_2$, equals a zero matrix. And as in MANOVA, with regard to the pattern under the alternative, both DWA and SSA are applied, i.e. when many entries of $\bbB_2$ are nonzero but the values are small, see the third combination $(\bbC_1^{(a)},\bbB_2^{(d)},\bbgam_1^{(a)})$, as well as when only two entries of $\bbB_2$ are nonzero but the values are significant, see the last combination $(\bbC_1^{(a)},\bbB_2^{(s)},\bbgam_1^{(a)})$. The numerical results in Table \ref{t4}  show that $\lambda_1^{\bbM_1}$  performs well under both alternatives.


We next consider the intra-subject hypothesis testing of the form
 \begin{equation}\label{0406.1}
H_0:\quad \bbC\bbB\bbD=\bbgam,
 \end{equation}
where
 $\bbC$ is a $g_1\times p_1$ known matrix of rank $g_1$, $\bbD$ is a $p_2\times g_2$ known matrix of rank $g_2$ and $\bbgam$ is a $g_1\times g_2$ known matrix of rank $\min\{g_1,g_2\}$.  The hypothesis and error SSCPs for (\ref{0406.1}) can be obtained from $\bbH_1$ and $\bbE_1$ by modifying the multivariate linear model (\ref{0403.1}) to the following expression
\[
\bbY\bbD=\bbX\bbB\bbD+\bbZ\bbD.
\]
Replacing $\bbY,\bbB$ and $\bbZ$ by $\bbY\bbD,\bbB\bbD$ and $\bbZ\bbD$ respectively, we can then conclude that the SSCPs for conducting the hypothesis testing (\ref{0406.1}) are
\begin{equation}\label{0408.3}
\bbH=(\bbZ\bbD)^T\bbP_{\widetilde{\bbX}}(\bbZ\bbD),\quad
\bbE=(\bbZ\bbD)^T[\bbI-\bbP_{\bbX}](\bbZ\bbD),
\end{equation}
where $\widetilde{\bbX}=\bbX(\bbX^T\bbX)^{-1}\bbC^T$, $\bbP_{\widetilde{\bbX}}=\widetilde{\bbX}(\widetilde{\bbX}^T\widetilde{\bbX})^{-1}\widetilde{\bbX}^T$ and $\bbP_{\bbX}=\bbX(\bbX^T\bbX)^{-1}\bbX^T$. It is easy to check that $\bbP_{\widetilde{\bbX}}(\bbI-\bbP_{\bbX})=0$. Denote the largest eigenvalue of $\bbM=\bbE^{-1}\bbH$ by $\lambda_1^{\bbM}$.
The only difference between the analysis of $\lambda_1^{\bbM_1}$ and $\lambda_1^{\bbM}$ is that $\bbZ_{N\times p_2}$ in (\ref{0408.2}) is replaced by $(\bbZ\bbD)_{N\times g_2}$ in (\ref{0408.3}). So assigning $p_2=g_2$ in  Corollary \ref{0406.6}, we can obviously obtain the following conclusion for $\lambda_1^{\bbM}$.

\begin{coro}\label{0406.2}
For the known matrix $\bbD$ and the error matrix $\bbZ$ in the multivariate linear model (\ref{0403.1}), assume that $\bbZ\bbD$ can be written as $\bbZ\bbD=\bbW\bbT^{\frac{1}{2}}$ for some positive definite matrix $\bbT_{g_2\times g_2}$ and the matrix $\bbW_{N\times g_2}$ satisfies Condition \ref{0603-1}. Suppose that $\liminf\limits_{N\rightarrow \infty}\frac{N}{g_2+p_1}>1$, $\frac{g_1}{p_1}$ and $\frac{g_2}{N-p_1}$ are both bounded away from $0$  and $\frac{g_1}{g_2}$ is bounded away from $0$ and $\infty$. Denote the largest eigenvalue of  $\bbM=\bbE^{-1}\bbH$ by $\lambda_1^{\bbM}$. Then under the null hypothesis (\ref{0406.3}), there exist $\mu_N$ and $\sigma_N$ such that
$$\lim_{N \rightarrow \infty} P(\sigma_{N} g_1^{2/3} (\lambda_{1}^{\bbM}-\mu_{N}) \leq s)=F_1(s),$$
where $F_1(s)$ is the Type-1 Tracy-Widom distribution.
The mean
$\mu_N$ and the variance $\sigma_N$ can be decided in the same way as in Corollary \ref{0406.6} by replacing $p_2$ with $g_2$.
\end{coro}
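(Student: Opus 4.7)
The plan is to reduce Corollary \ref{0406.2} directly to Corollary \ref{0406.6} by treating the product $\bbZ\bbD$ as a single new ``error matrix'' of dimensions $N\times g_2$. First I would inspect the expressions in (\ref{0408.3}) and observe that $\bbH$ and $\bbE$ are structurally identical to $\bbH_1$ and $\bbE_1$ in (\ref{0408.2}), with every occurrence of $\bbZ$ replaced by $\bbZ\bbD$. Consequently the testing matrix $\bbM = \bbE^{-1}\bbH$ takes exactly the form (\ref{b12}) of $\bbM_1$ under the substitution $\bbZ \mapsto \bbZ\bbD$ and $p_2 \mapsto g_2$. Crucially, the projections $\bbP_{\widetilde{\bbX}}$ and $\bbI-\bbP_{\bbX}$ depend only on $\bbX$ and $\bbC$; they still satisfy $\bbP_{\widetilde{\bbX}}(\bbI-\bbP_{\bbX})=0$, and their ranks remain $g_1$ and $N-p_1$ respectively with high probability, so the orthogonality/rank conditions required by Theorem \ref{1016-1} are inherited unchanged.

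Next I would verify that the hypotheses of Corollary \ref{0406.6} transfer cleanly. By assumption $\bbZ\bbD = \bbW\bbT^{1/2}$ with $\bbT$ a $g_2\times g_2$ positive definite matrix and $\bbW_{N\times g_2}$ satisfying Condition \ref{0603-1}; this is precisely the analogue of the structural assumption on $\bbZ$ in Corollary \ref{0406.6} with $g_2$ playing the role of $p_2$. The scaling constraints $\liminf N/(g_2+p_1)>1$, $g_1/p_1$ and $g_2/(N-p_1)$ bounded away from $0$, and $g_1/g_2$ bounded away from $0$ and $\infty$ map term-by-term onto the corresponding constraints of Corollary \ref{0406.6}. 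By Remark \ref{0406.7}, the specific positive definite $\bbT$ is immaterial for the largest eigenvalue, which ensures the substitution is lossless.

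With these identifications in place, a single invocation of Corollary \ref{0406.6}, applied to the ``error matrix'' $\bbZ\bbD$ with parameters $(p_1, g_1, g_2)$ in place of $(p_1, g_1, p_2)$, yields the Tracy--Widom limit for $\lambda_1^{\bbM}$ rescaled by $\sigma_N g_1^{2/3}$. The centering $\mu_N$ and scale $\sigma_N$ are read off from formulas (\ref{2.7})--(\ref{2.9}) in Theorem \ref{1016-1} with the assignment $N_2 = p_1$, $N_1 = g_1$, $M_1 = g_2$, which is exactly what the statement ``replacing $p_2$ with $g_2$'' refers to; the relevant limiting spectral distribution $F(\lambda)$ is that of $(\bbW^T(\bbI-\bbP_{\bbX})\bbW)^{-1}$, which exists and has the Mar\v cenko--Pastur–based form indicated in Remark \ref{h0424-1}.

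The only point that requires a moment of care, and is the closest thing to an obstacle, is the randomness/independence bookkeeping: Theorem \ref{1016-1} demands that $\bbU_1\bbU_1^T$ and $\bbU_2\bbU_2^T$ be independent of the underlying i.i.d.\ matrix (here $\bbW$). Since $\bbD$, $\bbC$ are deterministic and $\bbW$ is assumed independent of $\bbX$, both $\bbP_{\widetilde{\bbX}}$ and $\bbI-\bbP_{\bbX}$ are independent of $\bbW$, so this condition holds. Everything else is direct substitution, so the proof reduces to this verification together with the appeal to Corollary \ref{0406.6}.
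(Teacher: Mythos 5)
Your proposal is correct and follows essentially the same route as the paper: the paper also observes that $\bbH$ and $\bbE$ in (\ref{0408.3}) are obtained from (\ref{0408.2}) by the substitution $\bbZ\mapsto\bbZ\bbD$, and then invokes Corollary \ref{0406.6} with $p_2$ replaced by $g_2$. Your additional checks (orthogonality and ranks of the projections, independence of $\bbP_{\widetilde{\bbX}}$, $\bbI-\bbP_{\bbX}$ from $\bbW$, and the irrelevance of $\bbT$ via Remark \ref{0406.7}) are just the details the paper leaves implicit.
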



\section{Numerical studies}\label{numer}
This section is to investigate the accuracy of our proposed asymptotic Tracy-Widom distribution (Section \ref{sim1}) as well as its numerical performance in various applications (Sections \ref{sim2}-\ref{sim4}). Before proceeding to the simulation results, we first introduce an asymptotic substitution of the limiting distribution for the largest eigenvalue in Theorem \ref{1016-1}.
The formulae for calculating $\mu_N$ and $\sigma_N$ in (\ref{2.7})-(\ref{2.9}) are difficult to work with. Referring to \cite{J08} and \cite{WY}, we facilitate the computation by using an approximation in terms of the log transform of $\lambda_1$ in Theorem \ref{1016-1} as
\begin{equation}\label{0414.1}
\lim_{N \rightarrow \infty} P(\frac{\ln\lambda_1-\widetilde{\mu}}{\widetilde{\sigma}} \leq s)=F_1(s),
\end{equation}
where $F_1(s)$ still indicates the Type-1 Tracy-Widom distribution and the new mean $\widetilde{\mu}$ and variance $\widetilde{\sigma}$ are defined by
\[
\widetilde{\mu}=2\ln\tan(\frac{\phi+\varphi}{2}),\quad \widetilde{\sigma}^3=\frac{16}{(N-N_2+N_1-1)^2}\frac{1}{\sin^2(\phi+\varphi)\sin\phi\sin\varphi}.
\]
The angle parameters $\phi$ and $\varphi$ are defined by
\[
\sin^2(\frac{\varphi}{2})=\frac{\min(M_1,N_1)-1/2}{N-N_2+N_1-1},\quad \sin^2(\frac{\phi}{2})=\frac{\max(M_1,N_1)-1/2}{N-N_2+N_1-1}.
\]
The asymptotic equivalence between the approximation (\ref{0414.1}) and the one in Theorem \ref{1016-1}  have been proved in \cite{J08} and \cite{WY}. All simulations in this section are conducted by adopting this $\ln\lambda_1$'s asymptotic expression. In the sequel, we also use the word ``rescaled $\lambda_1$'' to denote the term $\frac{\ln\lambda_1-\widetilde{\mu}}{\widetilde{\sigma}}$ in (\ref{0414.1}).  The values of $\widetilde{\mu}$ and $\widetilde{\sigma}$ in the applications can be obtained simply by replacing $N,N_1,N_2,M_1$  with their corresponding notations in Sections \ref{cca}-\ref{mul}.  All simulated results below are recorded based on 10000 replications of such a re-scaled largest eigenvalue.

\subsection{Approximation accuracy}\label{sim1}
This subsection is to investigate the Tracy-Widom approximation accuracy for the unified matrix $\bbom$ in Section \ref{ccagenr}. Since the positive definite matrix $\bbT$ does not influence $\lambda_1$, we simply let $\bbT=\bbI_{M_1}$. Other settings to be used in the simulation are summarized below.

\begin{description}
\item [(1).] {\bf Data distribution: } Three data distributions will be used to generate the entries of $\bbX$ in the model (\ref{1016.3}).
\begin{itemize}
\item Data 1: Standard Normal distribution $N(0,1)$.
\item Data 2: Discrete distribution with probability mass function $P(x=-\sqrt{3})=P(x=\sqrt{3})=1/6$ and $P(x=0)=2/3$.
\item Data 3: Standardized Gamma distribution $Gamma(4,0.5)$.
\end{itemize}
The three distributions are used to verify Condition \ref{0603-1}, i.e. for the data distribution, we do not need other restrictions except for the first two moments match and all moments are finite. Data 2 supports that the distribution can be a discrete one, while Data 3 is a skewed one with the third and fourth moments different from those of the standard normal distribution.
\item [(2).] {\bf Dimensions $(M_1,N_1,N_2,N)$:}  Considering the restrictions on the dimensions, we set two initial choices: $M^{(1)}=(M_1,N_1,N_2,N)=(5,8,10,30)$ and $M^{(2)}=(M_1,N_1,N_2,N)=(15,8,10,50)$, with $M_1$ being smaller than $(N_1,N_2)$ and larger than $(N_1,N_2)$, respectively. Then we change the magnification factor attached to the initial choices to investigate the performance when the dimensions increase. See the second row of Table \ref{t1}.
\item [(3).] {\bf Matrices $\bbU_1$ and $\bbU_2$:} We randomly generate two matrices $\bbL_{N\times N_2}$ and $\bbD_{N_2\times N_1}$ with entries from standard normal distribution.  Let $\bbU_1\bbU_1^T=(\bbL\bbD)(\bbD^T\bbL^T\bbL\bbD)^{-1}(\bbL\bbD)^T$ and $\bbU_2\bbU_2^T=\bbI_{N}-\bbL(\bbL^T\bbL)^{-1}\bbL^T$ in the model (\ref{1016.3}). It is easy to check that such settings satisfy the conditions on $\bbU_1$ and $\bbU_2$, taking into account the properties of projection matrices.
\end{description}
Simulated results based on above settings are recorded in Table \ref{t1}. The column titled ``Percentile'' lists the percentiles of Tracy-Widom distribution corresponding to quantiles in the column ``TW''. The next ten columns record our estimated cumulative probabilities (i.e. estimated quantiles)  for the rescaled $\lambda_1$ under various settings stated above, i.e. repeating 10000 times and finding 10000 rescaled $\lambda_1$'s, then the proportion of values that are less than corresponding percentiles are recorded in Table \ref{t1}. i.e. $\frac{\# \{\text{rescaled}\ \ \lambda_1\leq \text{``Percentile''}\}}{10000}$. Comparing the empirical results (the last ten columns) with the theoretical ones (the ``TW'' column), we can see that the rescaled $\lambda_1$ matches with the Tracy-Widom law quite well, which supports the accuracy of approximation in Theorem \ref{1016-1}. Moreover, although our theoretical result is developed for large dimensions, Table \ref{t1} indicates that such approximation also works well even when the dimensions are small.

\subsection{Performance in the independence testing}\label{sim2}
This subsection is to investigate the performance of our proposed largest eigenvalue $\lambda_1^{\bbS}$ in the independence testing of Section \ref{cca}. For ease of construction, we let $M_1=M_2$ and consider a series of settings for the two random vectors $\bbz$ and $\bby$ in the following way:
\[
\bbz=\sqrt{1-\tau}\bbx+\sqrt{\tau}\bby,\quad 0\leq \tau\leq 1,
\]
where two ($M_1\times 1$) random vectors  $\bbx$ and $\bby$ are independent and $\tau$ is a parameter determining  the level of dependence between $\bbz$ and $\bby$. When $\tau=0$, $\bbz$ and $\bby$ are independent, which is the null hypothesis (\ref{0408.1}) in Section \ref{cca}. Otherwise, as $\tau>0$ becomes larger, the dependence between $\bbz$ and $\bby$ increases.

Considering the conditions on the dimensions, as in Section \ref{sim1}, we also set an initial choice for $(M_1,M_2,N)$ as $M^{(0)}=(M_1,M_2,N)=(10,10,40)$ and then change the magnification factor to check the influence of dimensionality. The nominal significance level is set to be $\alpha=0.05$. According to Table \ref{t1}, the corresponding theoretical quantile value is $c_{\alpha}=0.98$. That is to say, we compare the rescaled $\lambda_1^{\bbS}$ introduced in Section \ref{cca} with $c_{\alpha}$. If it is smaller than $c_{\alpha}$, then the null hypothesis (\ref{0408.1}) is accepted, i.e. $\bbz$ and $\bby$ are independent. Otherwise, we conclude that they are dependent. We use discrete distribution or Gamma distribution, stated in above Section \ref{sim1} {\bf (1)}, to generate $N$ samples for $\bbx$ and $\bby$. Repeating 10000 times, we can find 10000 rescaled $\lambda_1^{\bbS}$'s and the proportion of values that are larger than $c_{\alpha}$ are recorded in Table \ref{t2}. i.e. $\frac{\# \{\text{rescaled}\ \ \lambda_1^{\bbS}>c_{\alpha}\}}{10000}$.\\
So when $\tau=0$, the fourth row of Table \ref{t2} records the estimated sizes, which are close to 0.05. When $\tau$ changes from 0.1 to 0.4, the corresponding rows give the estimated powers. We can observe that as the dependence between $\bbz$ and $\bby$ becomes stronger and as the dimensions become larger, the power values increase.
 We do not attach the results when $\tau>0.4$ here because the powers are always around 1. One can also expect such a phenomenon according to the trend in Table \ref{t2}.

\subsection{Performance in MANOVA}\label{sim3}
This subsection is to investigate the performance of our proposed largest eigenvalue $\lambda_1^{\bbV}$ in the MANOVA approach of Section \ref{manova}. The nominal significance level is set to be $\alpha=0.05$.  Consider $g=3$ groups with mean vectors $\bbmu_1,\bbmu_2,\bbmu_3$ and common covariance matrix $\Sigma$. We select $\Sigma$ as the covariance matrix of MA(1) model with the parameter $\theta_1=0.2$ and use Gamma distribution stated in Section \ref{sim1} {\bf(1)} to generate the data.
Other settings that will be used in the simulation are summarized below.
\begin{description}
\item [(1).]{\bf Mean vectors: } Let $\bbmu_1=\bbzero_p$, a $p$-dimensional zero vector, $\bba_1=(\tau_1,\cdots,\tau_1)^T$, a $p$-dimensional vector with all entries being $\tau_1$ and $\bba_2=(\tau_2,0,\cdots,0)^T$, a $p$-dimensional vector with only the first entry having a nonzero value $\tau_2$. Three different settings on the mean vectors are considered.
\begin{itemize}
\item $H_0$: $\bbmu_1=\bbmu_2=\bbmu_3=\bbzero_p$. This setting corresponds to the null hypothesis (\ref{0409.1}) in Section \ref{manova}. It is used to check the empirical size performance when the null hypothesis is true. Both of the following two settings are under the alternative hypothesis, i.e. the three groups do not share the same mean vector.
\item $H_1^{(1)}$ and $H_1^{(1)'}$: $\bbmu_1=\bbzero_p$, $\bbmu_2=\bbmu_1+\bba_1$ and $\bbmu_3=\bbmu_2+\bba_1$. This setting reflects the dense but weak alternative (DWA), which means that there are many different entries, but these differences are faint. We choose $\tau_1=0.2$ for $H_1^{(1)}$ and a larger $\tau_1=0.5$ for $H_1^{(1)'}$. The magnitude of the difference vector $\bba_1$ is $\|\bba_1\|^2=\tau_1^2p=0.04p$ or $0.25p$.
\item $H_1^{(2)}$: $\bbmu_1=\bbzero_p$, $\bbmu_2=\bbmu_1+\bba_2$ and $\bbmu_3=\bbmu_2+\bba_2$. This setting reflects the sparse but strong alternative (SSA), which means that the differences are rare, but significant where they appear. We choose $\tau_2=1$. Then the magnitude of the difference vector $\bba_2$ is always 1.
\end{itemize}
\item [(2).] {\bf Dimensions $(n_0,p)$:}  For simplicity,  let $n_1=n_2=n_3\coloneqq n_0$. Then $n=3n_0$. We select two initial choices for $(n_0,p)$ as $M^{(1)}=(p,n_0)=(5,8)$ and $M^{(2)}=(p,n_0)=(8,5)$, with $p<n_0$ and $p>n_0$, respectively. Then we change the magnification factor for the initial choices from 1 to 100 (see the first and sixth columns of Table \ref{t3}) to investigate the influence of dimensions on the numerical performance.
\end{description}
As in the above Section \ref{sim2}, by repeating 10000 times, we can find 10000 rescaled $\lambda_1^{\bbV}$'s and the proportion of values that are larger than $c_{\alpha}$ are recorded in Table \ref{t3}. i.e. $\frac{\# \{\text{rescaled}\ \ \lambda_1^{\bbV}>c_{\alpha}\}}{10000}$. The two columns titled ``$H_0$'' record estimated sizes, from which we can see that the size performance becomes better as the dimensions become larger. This matches with our theoretical conclusion, which relies on $n\rightarrow\infty$. Other columns report estimated powers under different mean vectors' settings. Generally speaking, the powers increase fast as the dimensions become larger, say the power values of the $8M^{(i)}$ row already all exceed 0.8. And for small dimensions, the $M^{(1)}$ domain shows better performance than $M^{(2)}$, which indicates that $\lambda_1^{\bbV}$ prefers $p<n_0$ when both $p$ and $n_0$ are small. However, for moderate and large dimensions, such preference will be weakened since all the power values are close to 1.

\subsection{Performance in multivariate linear model}\label{sim4}
This subsection is to investigate the performance of our proposed largest eigenvalue $\lambda_1^{\bbM_1}$ in the multivariate linear model of Section \ref{mul}. The nominal significance level is set to be $\alpha=0.05$. The covariance matrix $\Sigma$ of the error matrix $\bbZ$ is selected to be a Toeplize matrix with first row $(1,0.5,0.5^2,0.5^3,\cdots,0.5^{p-1})$, i.e. the covariance matrix for the AR(1) model with the parameter $\sigma_1=0.5$. And we use Gamma distribution stated in Section \ref{sim1} {\bf(1)} to generate the data $\bbZ$. According to Section \ref{mul}, the distribution of $\bbX$ does not influence the result. So we simply obtain the entries $\bbX$ from a uniform distribution $U(-2,2)$. Considering the conditions on the dimensions, we  set an initial choice for $(p_1,p_2,N)$ as $M^{(0)}=(p_1,p_2,N)=(10,6,25)$ and then change the magnification factor from 1 to 20 to check the influence of dimensionality. Other settings for the model (\ref{0403.1}) that will be used in the simulation are summarized below.
\begin{description}
\item [(1).]{\bf Parameter matrix $\bbB$: } Set
$\bbB=\begin{pmatrix}(\bbB_1)_{g_1\times p_2}\\ (\bbB_2)_{(p_1-g_1)\times p_2}\end{pmatrix}_{p_1\times p_2}$. For ease of matrix construction, we let $g_1=\frac{1}{2}p_1$ in the simulation.  $\bbB_1$ is chosen to be a $(g_1\times p_2)$ zero matrix, i.e. $\bbB_1=\bbzero_{g_1\times p_2}$. $(\bbB_2)_{g_1\times p_2}$ has two different settings.
\begin{itemize}
\item $\bbB_2^{(d)}$: All entries of $\bbB_2^{(d)}$ are generated from a discrete distribution with probability mass function $P(x=0.1)=P(x=0.2)=P(x=0.3)=1/3$.  Then this $\bbB_2^{(d)}$ consists of nonzero small components. This corresponds to the DWA (dense but weak alternative) stated in the mean vectors' setting of Section \ref{sim3}.
\item $\bbB_2^{(s)}$: The entries of $\bbB_2^{(s)}$ are all zeros except for the first 2 diagonal elements being ones, i.e. $\bbB_2^{(s)}=\begin{pmatrix}\bbI_2&\\ &\bbzero\end{pmatrix}$. This corresponds to the SSA (sparse but strong alternative) stated in the mean vectors' setting of Section \ref{sim3}.
\end{itemize}
The two different settings of $\bbB_2$ are to investigate the power performance of $\lambda_1^{\bbM_1}$ in testing (\ref{0406.3}) under different alternatives.

\item [(2).]{\bf Matrix $\bbC_1$: } We consider two special cases: $\bbC_1^{(a)}=[\bbzero,\bbI_{g_1}]$ and
$\bbC_1^{(b)}=[\bbI_{g_1},\bbzero]$.

\item [(3).]{\bf Matrix $\bbgam_1$: } $\bbgam_1$ is selected to be $\bbgam_1^{(a)}=\bbzero$ or $\bbgam_1^{(b)}=\bbB_2$.
\end{description}
Four combinations of $(\bbC_1,\bbB_2,\bbgam_1)$ are used in Table \ref{t4}. For each combination, as in previous sections, by repeating 10000 times, we can find 10000 rescaled $\lambda_1^{\bbM_1}$'s and the proportion of values that are larger than $c_{\alpha}$ are recorded in Table \ref{t4}. i.e. $\frac{\# \{\text{rescaled}\ \ \lambda_1^{\bbM_1}>c_{\alpha}\}}{10000}$.

The first two combinations are used for size testing. Since the two settings of $\bbB_2$ are constructed to investigate power performance under different alternatives, for size purpose, we just adopt one of them--$\bbB_2^{(d)}$.
The first combination $(\bbC_1^{(b)},\bbB_2^{(d)},\bbgam_1^{(a)})$ is to test whether the first ($g_1\times p_2$) block of $\bbB$ is a zero block, i.e.  $H_0: \bbB_1=\bbzero$. The second combination $(\bbC_1^{(a)},\bbB_2^{(d)},\bbgam_1^{(b)})$ is to test whether the second ($(p_1-g_1)\times p_2$) block of $\bbB$ equals to a given matrix, i.e. $H_0: \bbB_2=\bbgam_1^{(b)}$.  One can observe that the sizes are always close to 0.05, confirming the asymptotic distribution developed for $\lambda_1^{\bbM_1}$ in Section \ref{mul}.

The last two combinations are used for power testing. i.e. testing whether $\bbB_2=\bbzero$. Two alternatives are considered. The third combination $(\bbC_1^{(a)},\bbB_2^{(d)},\bbgam_1^{(a)})$ is for DWA (dense but weak alternative) and the  last one  $(\bbC_1^{(a)},\bbB_2^{(s)},\bbgam_1^{(a)})$ is for  SSA (sparse but strong alternative). We can see that for small dimensions, SSA works better than DWA, while as the dimensions increase, a reversal takes place. This is reasonable because the magnitude of difference for DWA is much involved by values of dimensions. And for appropriate large dimensions, all power values are close to 1.

\begin{table}[!htbp]
\begin{center}
{\footnotesize
\caption{\label{t1} Simulated quantiles for rescaled $\lambda_1$, i.e. the values $\frac{\# \{\text{rescaled}\ \ \lambda_1\leq \text{``Percentile''}\}}{10000}$  based on 10000 replications under different data distributions and different dimensions.}
\begin{tabular}{c c| c c c c c| c c c c c}
\hline
\multicolumn{2}{c|}{Standard Normal}&\multicolumn{5}{c|}{$M^{(1)}=(M_1,N_1,N_2,N)=(5,8,10,30)$}&\multicolumn{5}{c}{$M^{(2)}=(M_1,N_1,N_2,N)=(15,8,10,50)$}\\
\hline
Percentile&TW& $M^{(1)}$& $2M^{(1)}$& $8M^{(1)}$& $16M^{(1)}$& $20M^{(1)}$& $M^{(2)}$& $2M^{(2)}$& $8M^{(2)}$& $16M^{(2)}$& $20M^{(2)}$\\
\hline
   -3.90&0.01&0.0132&0.0083&0.0099&0.0080&0.0108&0.0109&0.0102&0.0085&0.0091&0.0090\\
   -3.18&0.05&0.0546&0.0501&0.0497&0.0502&0.0491&0.0514&0.0495&0.0467&0.0450&0.0476\\
   -2.78&0.10&0.1041&0.1011&0.0995&0.1030&0.0992&0.1028&0.0974&0.0981&0.0956&0.0975\\
   -1.91&0.30&0.2941&0.2948&0.3024&0.3026&0.3028&0.3047&0.3049&0.2944&0.2908&0.3004\\
   -1.27&0.50&0.5031&0.5007&0.5026&0.5114&0.5048&0.5072&0.5077&0.4987&0.4971&0.5009\\
   -0.59&0.70&0.7101&0.7057&0.7116&0.7116&0.7081&0.7074&0.7040&0.7075&0.7037&0.7051\\
    0.45&0.90&0.9138&0.9027&0.9050&0.9062&0.9014&0.9055&0.9019&0.9019&0.9048&0.9038\\
    0.98&0.95&0.9610&0.9507&0.9552&0.9538&0.9519&0.9569&0.9525&0.9502&0.9560&0.9560\\
    2.02&0.99&0.9933&0.9896&0.9898&0.9909&0.9912&0.9916&0.9906&0.9900&0.9910&0.9912\\
\hline
\hline
\multicolumn{2}{c|}{Discrete}&\multicolumn{5}{c|}{$M^{(1)}=(M_1,N_1,N_2,N)=(5,8,10,30)$}&\multicolumn{5}{c}{$M^{(2)}=(M_1,N_1,N_2,N)=(15,8,10,50)$}\\
\hline
Percentile&TW& $M^{(1)}$& $2M^{(1)}$& $8M^{(1)}$& $16M^{(1)}$& $20M^{(1)}$& $M^{(2)}$& $2M^{(2)}$& $8M^{(2)}$& $16M^{(2)}$& $20M^{(2)}$\\
\hline
   -3.90&0.01 &0.0116&0.0093&0.0094&0.0099&0.0082&0.0099&0.0091&0.0098&0.0104&0.0080\\
   -3.18 &0.05 &0.0523&0.0464&0.0503&0.0514&0.0477&0.0496&0.0480&0.0529&0.0495&0.0460\\
   -2.78 &0.10&0.0996&0.0943&0.1034&0.0983&0.0998&0.0951&0.0986&0.1037&0.0978&0.0974\\
   -1.91 &0.30&0.3049&0.2954&0.3054&0.2974&0.3024&0.2933&0.2915&0.3069&0.2968&0.3050\\
   -1.27 &0.50&0.5068&0.5002&0.5114&0.4961&0.4984&0.4989&0.4965&0.5069&0.5015&0.4964\\
   -0.59 &0.70&0.7124&0.7080&0.7065&0.6986&0.7045&0.7065&0.6976&0.7062&0.7042&0.6946\\
    0.45 &0.90&0.9102&0.9098&0.9035&0.9014&0.9021&0.9065&0.9031&0.9058&0.9067&0.8966\\
    0.98 &0.95 &0.9583&0.9565&0.9537&0.9508&0.9512&0.9559&0.9540&0.9515&0.9546&0.9494\\
    2.02 &0.99 &0.9931&0.9917&0.9905&0.9911&0.9894&0.9921&0.9903&0.9915&0.9912&0.9894\\
\hline
\hline
\multicolumn{2}{c|}{Gamma(4,0.5)}&\multicolumn{5}{c|}{$M^{(1)}=(M_1,N_1,N_2,N)=(5,8,10,30)$}&\multicolumn{5}{c}{$M^{(2)}=(M_1,N_1,N_2,N)=(15,8,10,50)$}\\
\hline
Percentile&TW& $M^{(1)}$& $2M^{(1)}$& $8M^{(1)}$& $16M^{(1)}$& $20M^{(1)}$& $M^{(2)}$& $2M^{(2)}$& $8M^{(2)}$& $16M^{(2)}$& $20M^{(2)}$\\
\hline
   -3.90& 0.01& 0.0109& 0.0091& 0.0099& 0.0093& 0.0104& 0.0098& 0.0069& 0.0107& 0.0096& 0.0104\\
   -3.18& 0.05& 0.0507& 0.0502& 0.0500& 0.0501& 0.0507& 0.0494& 0.0452& 0.0503& 0.0486& 0.0495\\
   -2.78& 0.10& 0.1025& 0.1011& 0.0996& 0.1013& 0.0991& 0.1006& 0.0957& 0.1021& 0.0965& 0.1002\\
   -1.91& 0.30& 0.3024& 0.2953& 0.3008& 0.2993& 0.2934& 0.2985& 0.2965& 0.2970& 0.2972& 0.2983\\
   -1.27& 0.50& 0.4992& 0.4994& 0.5013& 0.4967& 0.4865& 0.5009& 0.4890& 0.5028& 0.4995& 0.5010\\
   -0.59& 0.70& 0.7033& 0.7097& 0.6994& 0.6935& 0.6923& 0.7100& 0.7006& 0.7015& 0.7040& 0.7080\\
    0.45& 0.90& 0.9065& 0.9062& 0.9018& 0.9005& 0.9023& 0.9052& 0.9045& 0.8970& 0.9027& 0.9037\\
    0.98& 0.95& 0.9546& 0.9531& 0.9503& 0.9509& 0.9503& 0.9515& 0.9531& 0.9499& 0.9503& 0.9523\\
    2.02& 0.99& 0.9908& 0.9912& 0.9906& 0.9895& 0.9888& 0.9910& 0.9901& 0.9904& 0.9904& 0.9900\\
\hline
\end{tabular}
}
\end{center}
\end{table}

\begin{table}[!htbp]
\begin{center}
{\footnotesize
\caption{\label{t2} Simulated values for $\frac{\#\{\text{rescaled}\ \ \lambda_1^{\bbS}>c_{\alpha}\}}{10000}$ based on 10000 replications. So ``$\tau=0$'' row records estimated sizes and other rows record estimated powers. The significance level is  $\alpha=0.05$.}

\begin{tabular}{c| c c c c c|| c c c c c}
\hline
\multicolumn{11}{c}{$M^{(0)}=(M_1,M_2,N)=(10,10,40)$}\\
\hline
&\multicolumn{5}{c||}{Discrete distribution}&\multicolumn{5}{c}{Gamma distribution}\\
\hline
$\tau$& $M^{(0)}$& $2M^{(0)}$& $4M^{(0)}$& $8M^{(0)}$& $10M^{(0)}$& $M^{(0)}$& $2M^{(0)}$& $4M^{(0)}$& $8M^{(0)}$& $10M^{(0)}$\\
\hline
0     & 0.0663    &0.0618 &   0.0622   & 0.0608&    0.0559	&       0.0672  &  0.0663  &  0.0591   & 0.0589&    0.0563\\
0.1   & 0.2766    &0.5049 &   0.8428   & 0.9978&    0.9998	&	    0.2932  &  0.5117  &  0.8540   & 0.9981&    1.0000\\
0.15   &0.4533    &0.7754 &   0.9915   & 1.0000&    1.0000	&	    0.4641  &  0.7887  &  0.9909   & 1.0000&    1.0000\\
0.2   & 0.6280    &0.9396 &   0.9999   & 1.0000&    1.0000	&	    0.6483  &  0.9463  &  1.0000   & 1.0000&    1.0000\\
0.25 &  0.7828 &   0.9911 &   1.0000   & 1.0000&    1.0000	&    0.7959   & 0.9934   & 1.0000  &  1.0000 &   1.0000\\
0.3  &  0.8959 &   0.9997 &   1.0000   & 1.0000&    1.0000	&    0.9113   & 0.9997   & 1.0000  &  1.0000  &  1.0000\\
0.4  &  0.9908 &   1.0000 &   1.0000   & 1.0000&    1.0000	&    0.9920   & 1.0000    &1.0000  &  1.0000  &  1.0000\\
\hline
\end{tabular}
}
\end{center}
\end{table}

\begin{table}[!htbp]
\begin{center}
{\footnotesize
\caption{\label{t3} Simulated values for $\frac{\#\{\text{rescaled}\ \ \lambda_1^{\bbV}>c_{\alpha}\}}{10000}$ based on 10000 replications. The ``$H_0$'' columns record estimated sizes and other columns record estimated powers.
The significance level is  $\alpha=0.05$.}
\begin{tabular}{c |c c c c|| c |c c c c}
\hline
\multicolumn{5}{c||}{$M^{(1)}=(p,n_0)=(5,8)$}&\multicolumn{5}{c}{$M^{(2)}=(p,n_0)=(8,5)$}\\
\hline
&$H_0$&$H_1^{(1)}$&$H_1^{(1)'}$&$H_1^{(2)}$&&$H_0$&$H_1^{(1)}$&$H_1^{(1)'}$&$H_1^{(2)}$\\
\hline
$M^{(1)}$   &0.0375	&0.0831	&0.5317	&0.5589&	$M^{(2)}$   &0.0374	&0.0511	&0.1502	&0.1098\\
$2M^{(1)}$  &0.0392	&0.2454	&0.9955	&0.8693&	$2M^{(2)}$  &0.0399	&0.1099	&0.7505	&0.2449\\
$4M^{(1)}$  &0.0405	&0.8535	&1.0000	&0.9907&	$4M^{(2)}$  &0.0386	&0.4395	&1.0000	&0.5020\\
$8M^{(1)}$  &0.0414	&1.0000	&1.0000	&1.0000&	$8M^{(2)}$  &0.0375	&0.9956	&1.0000	&0.8341\\
$16M^{(1)}$ &0.0445	&1.0000	&1.0000	&1.0000&	$16M^{(2)}$ &0.0424	&1.0000	&1.0000	&0.9897\\
$32M^{(1)}$ &0.0429	&1.0000	&1.0000	&1.0000&	$32M^{(2)}$ &0.0432	&1.0000	&1.0000	&0.9999\\
$64M^{(1)}$ &0.0396	&1.0000	&1.0000	&1.0000&	$64M^{(2)}$ &0.0390	&1.0000	&1.0000	&1.0000\\
$100M^{(1)}$&0.0442	&1.0000	&1.0000	&1.0000&	$100M^{(2)}$&0.0452	&1.0000	&1.0000	&1.0000\\
\hline
\end{tabular}
}
\end{center}
\end{table}

\begin{table}[!htbp]
\begin{center}
{\footnotesize
\caption{\label{t4} Simulated values for $\frac{\#\{\text{rescaled}\ \ \lambda_1^{\bbM_1}>c_{\alpha}\}}{10000}$ based on 10000 replications. The first two combinations record estimated sizes and the last two record estimated powers.
The significance level is  $\alpha=0.05$.}
\begin{tabular}{c| c c c c c c c c}
\hline
&\multicolumn{8}{c}{$M^{(0)}=(p_1,p_2,N)=(10,6,25)$}\\
\hline
$(\bbC_1,\bbB_2,\bbgam_1)$&$M^{(0)}$&$2M^{(0)}$&$3M^{(0)}$&$4M^{(0)}$&$6M^{(0)}$&$8M^{(0)}$&$10M^{(0)}$&$20M^{(0)}$\\
\hline
$(\bbC_1^{(b)},\bbB_2^{(d)},\bbgam_1^{(a)})$    &0.0400&    0.0447&    0.0453&    0.0469&    0.0487&    0.0460&    0.0466&    0.0468\\
$(\bbC_1^{(a)},\bbB_2^{(d)},\bbgam_1^{(b)})$    &0.0397&    0.0467&    0.0450&    0.0490&    0.0466&    0.0470&    0.0501&    0.0481\\
$(\bbC_1^{(a)},\bbB_2^{(d)},\bbgam_1^{(a)})$    &0.2298&    0.8923&    0.9999&    1.0000&    1.0000&    1.0000&    1.0000&    1.0000\\
$(\bbC_1^{(a)},\bbB_2^{(s)},\bbgam_1^{(a)})$    &0.8337&    0.9451&    0.9821&    0.9940&    0.9992&    1.0000&    0.9999&    1.0000\\
\hline
\end{tabular}
}
\end{center}
\end{table}

\newpage
\section{Appendix}\label{app}
\subsection{Outline of The Proof for Theorem \ref{1016-1}}
We first give an outline of the whole proof due to its complexity.
 Note that the matrix $\bbT$ does not influence the largest eigenvalue of $\bold{\Omega}$ in (\ref{1016.3}) and hence we can directly work on the matrix $(\bbX\bbU_2\bbU^T_2\bbX^T)^{-1}\bbX\bbU_1\bbU_1^T\bbX^T$.
 However it involves four $\bbX$ unlike sample covariance matrices.
Moreover $\bbX\bbU_1\bbU_1^T\bbX^T$ is not independent of $\bbX\bbU_2\bbU^T_2\bbX^T$ for general $\bbX$ (not necessarily consisting of Gaussian entries),
which makes it even harder to work on this matrix directly. In view of this, we construct a Wigner-type linearization matrix
\begin{eqnarray}\label{1125.1}
\bbH=\bbH(\bbX)\coloneqq \left(
  \begin{array}{ccc}
    -zI &  \bbU^T_1 \bbX^T &0 \\
    \bbX \bbU_1 &0 & \bbX \bbU_2 \\
    0& \bbU^T_2\bbX^T & I\\
  \end{array}
\right).
\end{eqnarray}
As will be seen, the linearization matrix is much more convenient when taking derivative with respect to the entries of $\bbX$ than $\bold{\Omega}$. By the Schur complement formula (\ref{0202.1}) below it turns out that the upper-left block of the $3\times 3$ block matrix $\bbH^{-1}$ is the Steiltjes transform
 of $\bbU_1^T\bbX^T(\bbX\bbU_2\bbU^T_2\bbX^T)^{-1}\bbX\bbU_1$ 
 (one can also refer to (\ref{1219.1}) below).
It then suffices to consider the linearization matrix $\bbH$ instead. First the strong local law of $\bbH^{-1}$ around $\mu_N$ (Theorem \ref{0817-1} below) is developed
which is the main body of the proof. 
The overall strategy of proving Theorem \ref{0817-1} is similar to that used in \cite{KY14} and it consists of two main parts.
Part one is to prove Theorem \ref{0817-1} by applying a new Linderberg's comparison approach raised by \cite{KY14} under the first three moments of the entries of $\bbX$ matching those of standard Gaussian entries.
This part is similar to \cite{WY}. However, in order to make this paper more self-consistent and clear,
we also repeat the necessary steps but omit some parts done in \cite{WY}. Building on part one, part two further proves Theorem \ref{0817-1}
when the first two moments of the entries of $\bbX$ match those of standard Gaussian entries (by dropping the 3rd moment matching condition).
After that, we use this local law to prove the edge universality (i.e. (\ref{h0417.1}) is not affected by the distribution of $\bbX$) by adopting the strategy stated in \cite{BPZ2014a} and \cite{LHY2011}.
The proof of Theorem \ref{1016-1} is complete by the fact that (\ref{h0417.1}) holds because $\bold{\Omega}$ becomes a F matrix when $\bbX$ consists of the Gaussian random variable (one can refer to Theorem 1 of \cite{J08} and Theorem 2.1 of \cite{WY}).

We would highlight the difference between the proof of this paper and that of \cite{WY}. The result about the edge university for F matrices
 (corresponding to $\bold{\Omega}$ in the Gaussian case) in \cite{WY} is our starting point because we need to use Linderberg's comparison approach to link
the edge universality of $\bold{\Omega}$ in the general case to that of F matrices.
However, in order to prove the strong local law,
 a main difficulty is that our main result about $\bold{\Omega}$ doesn't assume $\mathbb{E}\bbZ^3_{ij}=0$ (matching the Gaussian third moment), which is much different from the paper \cite{WY}
  when handling the dimension is bigger than the sample size there. As a consequence, the expectation of the higher moments of the variable of interest has to be
  evaluated by a much more complicated method. For example, in order to calculate the higher moments, we need to extract the $i$-th row of $\bbX$ from ${\bold\Pi}(z)$ defined at (\ref{a17}) below.
  However ${\bold\Pi}(z)$ is a complex function of $\bbX$, which is not easy to deal with. To handle this, we introduce a transition matrix $\bold{\Pi_1}(z)$ (defined at (\ref{0528.5}) in the supplementary file) to find out a compact and manageable expansion of $\Pi(z)$.

\subsection{Strong local law}


This subsection is to present the strong local law. To this end we present some necessary notations and definitions.

As in \cite{KY14}, we use the following definition to provides a simple way to describe the relationship between two random variables $\xi$ and $\zeta$.
\begin{deff}\label{1123.1}
Let
$$\xi=\{\xi^{(N)}(u):N\in \mathbb{N}, u\in U^{(N)}\}, \ \ \zeta=\{\zeta^{(N)}(u):N\in \mathbb{N}, u\in U^{(N)}\}$$
be two families of nonnegative random variables, where $U^{(N)}$ is a parameter set (can be either dependent on or independent of $N$). If for all small positive $\ep$ and $\sigma$, there exists a number $N(\ep,\sigma)$ only depending on $\ep$ and $\sigma$ such that
$$\sup_{u\in U^{(N)}}\mathbb{P}\left[|\xi^{(N)}(u)|>N^{\ep}|\zeta^{(N)}(u)|\right]\le N^{-\sigma}$$
for large enough $N\ge N(\ep,\sigma)$, then we say that $\zeta$ stochastically dominates $\xi$ uniformly in u. We denote this relationship by $\xi \prec \zeta$ or $\xi =O_{\prec}(\zeta)$. \textbf{If there exists a positive constant c such that $\xi\le c \zeta$, then we write $\xi\lesssim\zeta$}.
\end{deff}

  Recall the definition of $F$ in Theorem 2.1. If the entries of $\bbX$ are Gaussian distributed, then $\bbX\bbU_2\bbU^T_2\bbX^T$ and $\bbX\bbU_1\bbU^T_1\bbX^T$ are independent
and hence $(\bbX\bbU_2\bbU^T_2\bbX^T)^{-1}\bbX\bbU_1\bbU_1^T\bbX^T$ reduces to the F matrix in \cite{WY}. From \cite{BS06} one can then see that $m(z)$ is a unique solution in $\{z\in\mathcal{C}^+\}$ to the following equation
   \begin{eqnarray}\label{h0529.1}
 \frac{1}{m(z)}=-z+\frac{M_1}{N_1}\int \frac{t}{1+tm(z)}dF(t).
 \end{eqnarray}
Define $\rho(x)=\lim_{z\in \mathcal{C}^+\rightarrow x}\Im m(z)$. One can see that $\mu_N$ defined in (\ref{2.8}) is the rightmost end point of the support of $\rho(x)$.
For the positive constants $\tau$ and $\tau'$, we define the domains
\begin{eqnarray}\label{1121.1}
D(\tau, N)\coloneqq \{z=E+i\eta \in \mathbb{C}^+: |z|\ge \tau, |E|\le \tau^{-1}, N^{-1+\tau}\le \eta\le \tau^{-1} \},
\end{eqnarray}
\begin{eqnarray}\label{1121.2}
D_+=D_+(\tau,\tau', N)\coloneqq \{z\in D(\tau, N): E \ge \mu_N-\tau'\}.
\end{eqnarray}
Let $\bbG(z)=\bbH^{-1}$.  The explicit expression of $\bbG(z)$ can be calculated by the following formula
\begin{eqnarray}\label{0202.1}
\left(
  \begin{array}{cc}
    \bbK & \bbB\\
    \bbC & \bbD\\
  \end{array}
\right)^{-1}
=\left(\begin{array}{cc}
    0 & 0\\
    0 & \bbD^{-1}\\
  \end{array}\right)
  +\left(\begin{array}{cc}
    \bbI\\
    -\bbD^{-1}\bbC  \\
  \end{array}
  \right)
  (\bbK-\bbB\bbD^{-1}\bbC)^{-1}
 \left( \begin{array}{cc}
    \bbI & -\bbB\bbD^{-1}\\
  \end{array}\right).
\end{eqnarray}
To characterize the limit of $\bbG(z)$ introduce
 $\Gamma(X,z)= (\bbX\bbU_2\bbU^T_2\bbX^T+m(z)\bbI)^{-1}$ and
\begin{eqnarray}\label{a17}\Pi(z)=\left(
  \begin{array}{ccc}
    m(z)\bbI & 0&0 \\
    0& \Gamma(X,z)&0 \\
    0&0&\bbI+\bbU^T_2\bbX^T\Gamma(X,z)\bbX\bbU_2.\\
  \end{array}
\right) \end{eqnarray} 
As will be seen $\bbG(z)$ is close to $\Pi(Z)$ in $D_+$.  Set
$$\Psi=\Psi(z)=\sqrt{\frac{\Im m(z)}{N\eta}}+\frac{1}{N\eta}.$$

 \begin{thm} (Strong local law)\label{0817-1}
Suppose that $\bbX$ satisfies  Condition \ref{0603-1}. Then
\begin{itemize}
\item[(i)]  For any deterministic unit vectors $\bbv$, $\bbw\in \mathbb{R}^{N_1+N+M_1-N_2}$, 
\begin{eqnarray}\label{0310.1}\langle\bbv,(\bbG(z)-\Pi(z))\bbw\rangle\prec \Psi\end{eqnarray}
uniformly $z\in D_+$ and  
\item[(ii)]
\begin{eqnarray}\label{0310.2}|m_N(z)-m(z)|\prec \frac{1}{N\eta}\end{eqnarray}
uniformly in $z\in D_+$, where $m_N(z)=\frac{1}{N_1}\sum_{i=1}^{N_1}G_{ii}$. 
\end{itemize}
\begin{proof}
The proof of this theorem is delegated to the supplement.
\end{proof}
\end{thm}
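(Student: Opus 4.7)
The plan is to establish this strong local law through a two-stage Lindeberg comparison strategy, using the known F matrix local law from \cite{WY} as the Gaussian baseline and then progressively extending to general entries of $\bbX$.

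In stage one, when the entries of $\bbX$ are standard Gaussian, the orthogonality $\bbU_1^T\bbU_2 = 0$ makes $\bbX\bbU_1$ and $\bbX\bbU_2$ independent Gaussian blocks, so $\bbom$ reduces to an F-type matrix and the anisotropic local law for its linearization is available from \cite{WY}. I would then transfer this to the case where the first three moments of $X_{ij}$ match those of the standard Gaussian by the Lindeberg swapping scheme of \cite{KY14}: replace the entries of $\bbX$ one at a time by Gaussian counterparts, use the second-order resolvent identity $\bbG = \widetilde{\bbG} - \widetilde{\bbG}\Delta\bbG$ (where $\Delta$ encodes the swap via the linear dependence of $\bbH$ on $\bbX$) to Taylor-expand any polynomial functional of $G_{vw}$, and observe that the first three orders cancel by moment matching. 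The remainder is controlled by a priori resolvent estimates combined with $\Psi^4$ factors, and a standard Markov argument then converts moment bounds into the stochastic domination $\prec \Psi$.

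Stage two, the removal of the third-moment matching condition, is the main obstacle. Without $\mathbb{E}X_{ij}^3 = 0$ an additional third-order term appears in the Lindeberg expansion that must be shown to average to something negligible. A direct expansion is awkward because $\Pi(z)$ depends on $\bbX$ through $\Gamma(\bbX,z) = (\bbX\bbU_2\bbU_2^T\bbX^T + m(z)\bbI)^{-1}$, which is nonlinear in $\bbX$; moreover, $\bbom$ is biquadratic in $\bbX$ rather than quadratic as a sample covariance matrix would be, so the $i$-th row of $\bbX$ does not cleanly factor out of $\Pi(z)$. To make the row-dependence explicit, I would introduce an auxiliary transition matrix $\Pi_1(z)$ designed so that $\Pi(z)$ admits a compact expansion in terms of $\Pi_1(z)$ and the isolated row. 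This permits an explicit calculation of the third-moment contribution, which I would then show to vanish up to $\prec \Psi$ by exploiting the orthogonality $\bbU_1^T\bbU_2=0$ and a fluctuation-averaging estimate over the row index.

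Once the anisotropic bound (i) is established, the stronger entrywise estimate (ii) follows by choosing $\bbv = \bbw = \bbe_i$ and averaging $i$ over $\{1,\dots,N_1\}$. This additional averaging gives an $(N\eta)^{-1/2}$ improvement of Helffer-Sj\"ostrand / fluctuation-averaging type, reducing $\Psi$ to $(N\eta)^{-1}$. Throughout, the restriction to the edge domain $D_+$ enters through stability of the self-consistent equation \eqref{h0529.1} near the rightmost edge $\mu_N$: stability fails outside a neighborhood of the spectrum, and $D_+$ is precisely the region in which approximate-equation bounds transfer to resolvent bounds with the claimed rate.
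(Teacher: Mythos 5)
Your plan follows the paper's proof essentially step for step: the linearization $\bbH$, a Gaussian baseline for the local law (the paper invokes the anisotropic law of \cite{KY14}, with the F-matrix results of \cite{WY} entering for the eventual edge universality), a Lindeberg/interpolation comparison under three-moment matching with a multiscale bootstrap in $\eta$, and removal of the third-moment condition by extracting the $i$-th row of $\bbX$ from $\Pi(z)$ via exactly the auxiliary transition matrix $\Pi_1(z)$ you propose. The only organizational difference is part (ii): rather than deducing it from (i) through a stand-alone fluctuation-averaging lemma, the paper reruns the comparison argument for $m_N(z)-m(z)$ (no $\eta$-induction needed once (i) is available) and obtains the $\Psi^2$-type gain from the average over the index inside the moment estimates, which is the same mechanism you describe.
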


\subsection{Fluctuation at the right edge and universality}
\subsubsection{Fluctuation at the right edge}
Once Theorem \ref{0817-1} is ready it is not hard to show the following Lemma.
\begin{lem}\label{0525-1}
Under conditions of Theorem \ref{0817-1},
$$\lambda_1-\mu_N=O_{\prec}(N^{-\frac{2}{3}}).$$
\end{lem}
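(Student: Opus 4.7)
The plan is a standard edge-rigidity argument deducing optimal-scale concentration from the strong local law (Theorem \ref{0817-1}), in the spirit of \cite{LHY2011} and \cite{BPZ2014a}. I would split into upper and lower bounds on $\lambda_1 - \mu_N$ in the sense of $\prec$.

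\emph{Upper bound} $\lambda_1 \leq \mu_N + N^{-2/3+\varepsilon}$: I would combine (\ref{0310.2}) with the standard edge behavior $\Im m(E + i\eta) \lesssim \eta/\sqrt{E - \mu_N}$ valid for $E > \mu_N$, at $\eta = N^{-2/3-\varepsilon/4}$. These together bound $\Im m_N(E+i\eta)$ uniformly in $E \geq \mu_N + N^{-2/3+\varepsilon}$. The counting inequality
$$\#\{i : |\lambda_i - E| \leq \eta\} \leq 2 N_1 \eta \, \Im m_N(E + i\eta),$$
which follows from the trivial lower bound $\eta/((\lambda-E)^2+\eta^2) \geq 1/(2\eta)$ on $|\lambda - E| \leq \eta$, combined with a covering of $[\mu_N + N^{-2/3+\varepsilon}, \mu_N + C]$ by $O(N^\varepsilon)$ intervals of half-length $\eta$, then shows that the total number of eigenvalues above $\mu_N + N^{-2/3+\varepsilon}$ is $O_\prec(1)$. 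A slight enlargement of $\varepsilon$ upgrades this integer-valued $O_\prec(1)$ bound to exactly zero with high probability.

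\emph{Lower bound} $\lambda_1 \geq \mu_N - N^{-2/3+\varepsilon}$: I would use the square-root vanishing $\rho(x) \asymp \sqrt{\mu_N - x}$ near the right edge, which gives $\int_{\mu_N - L}^{\mu_N} \rho(x)\,dx \asymp L^{3/2}$. The expected number of eigenvalues in $[\mu_N - N^{-2/3+\varepsilon}, \mu_N]$ is therefore $\asymp N_1 \cdot N^{-1+3\varepsilon/2} = N^{3\varepsilon/2} \to \infty$. A Helffer--Sj\"ostrand representation converts (\ref{0310.2}) into the counting rigidity $|\#\{i : \lambda_i \leq E\} - N_1 \int_{-\infty}^E \rho(x)\,dx| \prec 1$ uniformly near the edge, which forces at least one eigenvalue into this interval with high probability.

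The main obstacle lies in the upper bound: the local-law error $1/(N\eta)$ and the single-eigenvalue contribution $1/(N_1 \eta)$ to $\Im m_N$ coincide in order, so a naive application only yields at most $O_\prec(1)$ outlier eigenvalues rather than zero. Following \cite{LHY2011,BPZ2014a}, the gap is closed by either an $\varepsilon$-net upgrade of the isotropic local law (\ref{0310.1}) to an operator-norm bound on $\bbG(z)$ (after which $\|\bbG(z)\|_{\mathrm{op}} \geq 1/\mathrm{dist}(z,\mathrm{spec}(\bbH))$ rules out spectrum beyond the threshold), or by iterating the counting estimate to progressively tighter edge scales.
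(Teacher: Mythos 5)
Your two-sided skeleton is the standard rigidity route, and the lower-bound sketch via Helffer--Sj\"ostrand counting is essentially fine (modulo the need for a reference energy beyond the edge, so the upper bound has to come first). The upper bound, however, does not close as written, and the paper gives no guidance here: its entire proof is the statement that the argument coincides with Lemma 13 of \cite{WY}. The step ``a slight enlargement of $\varepsilon$ upgrades this integer-valued $O_{\prec}(1)$ bound to exactly zero with high probability'' is false: $\xi\prec 1$ only says $\xi\le N^{\varepsilon}$ outside an event of polynomially small probability, so an integer-valued $O_{\prec}(1)$ quantity can perfectly well equal $1$ (the number of eigenvalues in a window of width $N^{-2/3+\varepsilon}$ just \emph{inside} the edge is $O_{\prec}(1)$ and is typically nonzero). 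You then concede exactly this obstacle in your last paragraph, so everything hinges on the two remedies offered there, and neither works as stated. An $\varepsilon$-net cannot upgrade the isotropic law (\ref{0310.1}) to an operator-norm bound: stochastic domination only provides $N^{-D}$ exceptional probabilities, which do not survive a union bound over the exponentially many points needed to net a unit sphere of dimension of order $N$; and even granted such a bound at fixed $\eta>0$, excluding a real eigenvalue at $E$ requires controlling $\bbG$ as $\eta\downarrow 0$, where the error $1/(N\eta)$ of (\ref{0310.2}) diverges. Iterating the dyadic counting estimate to smaller scales likewise never pushes an $N^{\varepsilon}$ eigenvalue count down to zero.

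What is missing is a strictly stronger input off the spectrum, which is how proofs of this statement in the literature you cite (\cite{LHY2011}, \cite{BPZ2014a}, and the F-matrix argument of \cite{WY} that the paper invokes verbatim) proceed: for $\kappa:=E-\mu_N\ge N^{-2/3+\varepsilon}$ one establishes an improved bound of the type
$$|m_N(z)-m(z)|\prec \frac{1}{N(\kappa+\eta)}+\frac{1}{(N\eta)^{2}\sqrt{\kappa+\eta}},$$
or an equivalent statement (a local law valid down to $\eta=0$ outside the support, or a continuity/comparison argument from the Gaussian case, where the model reduces to the F matrix and absence of outliers is known from \cite{J08} and \cite{WY}). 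With this improvement, an eigenvalue at such an $E$ forces $\Im m_N(E+i\eta)\ge (2N_1\eta)^{-1}$, which for a suitable $\eta\ll N^{-2/3}$ exceeds $\Im m(E+i\eta)$ plus the improved error, a contradiction; combined with the crude a priori bound $\lambda_1\le M_0^{2}$ coming from Lemma \ref{1121-1}, this excludes eigenvalues on all of $[\mu_N+N^{-2/3+\varepsilon},\tau^{-1}]$. Your proposal lacks precisely this off-spectrum estimate, so as it stands it shows only that at most $N^{\varepsilon}$ eigenvalues exceed $\mu_N+N^{-2/3+\varepsilon}$, not the lemma. (A minor additional slip: covering $[\mu_N+N^{-2/3+\varepsilon},\mu_N+C]$ by intervals of half-length $\eta\asymp N^{-2/3-\varepsilon/4}$ requires order $N^{2/3}$ intervals, not $O(N^{\varepsilon})$; this is harmless under $\prec$ but should be stated correctly.)
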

\begin{proof}
The proof of this theorem is given in the supplement.
\end{proof}
\subsubsection{Universality}
We now need edge universality at the rightmost edge of the support. i.e. the limiting distribution of $P(\sigma_{N} N_1^{2/3} (\lambda_{1}-\mu_{N}) \leq s)$
is not affected by the distribution of $\bbX$. This guarantees Theorem \ref{1016-1}. Similar to Theorem 6.3 of \cite{LHY2011},
 in order to show Theorem \ref{1016-1}, it suffices to show the following green function comparison theorem
(one can also refer to page 48 of \cite{WY} to understand the connection between Theorem \ref{0826-3} and (\ref{h0417.1})). The corresponding proof is also provided in the supplement.
\begin{thm}\label{0826-3}
Let $\ep >0$, $\eta=N^{-2/3+\ep}$, $E_1$, $E_2\in \mathbb{R}$ satisfy $E_1<E_2$ and
$$|E_1-\mu_N|,|E_2-\mu_N|\le N^{-2/3+\ep}.$$
Set $K:\mathbb{R}\rightarrow \mathbb{R}$ to be a smooth function such that
$$\max_{x}|K^{(l)}(x)|\le C, \ \ l=1,2,3,4, 5$$
for some constant $C$.
Then there exists a constant $\phi>0$ such that for large enough N and small enough $\ep$, we have
\begin{eqnarray}\label{1119.9}
|\mathbb{E}K(N\int_{E_1}^{E_2}\Im m_{X^1}(x+i\eta)dx)-\mathbb{E}K(N\int_{E_1}^{E_2}\Im m_{\bbX^0}(x+i\eta)dx)|\le N^{-\phi},
\end{eqnarray}
where the definitions of $\bbX^0$ and $\bbX^1$ are given in Section \ref{h0418.1} in the supplement.
\end{thm}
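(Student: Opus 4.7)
The plan is to apply a Lindeberg-type swapping strategy, replacing the entries of $\bbX$ one at a time. Order the entries $(i,j)$ arbitrarily and construct a sequence $\bbX^{(0)}=\bbX^1,\bbX^{(1)},\ldots,\bbX^{(M_1 N)}=\bbX^0$ in which consecutive matrices differ in only one entry. Writing
\begin{equation*}
\Phi(\bbX)\coloneqq K\Bigl(N\int_{E_1}^{E_2}\Im m_{\bbX}(x+i\eta)\,dx\Bigr),
\end{equation*}
the telescoping identity reduces (\ref{1119.9}) to showing
\begin{equation*}
|\mathbb{E}\Phi(\bbX^{(k)})-\mathbb{E}\Phi(\bbX^{(k-1)})|\le N^{-2-\phi'}
\end{equation*}
for some $\phi'>0$, uniformly in $k$. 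Summed over the $M_1 N=O(N^2)$ swaps this yields the desired $N^{-\phi}$ bound.

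For a fixed swap at position $(i,j)$, I would Taylor expand $\Phi$ in $X_{ij}$ around $X_{ij}=0$ to order $5$. Since $X_{ij}$ enters the linearization $\bbH$ in (\ref{1125.1}) only through two off-diagonal blocks and does so linearly, $\partial_{X_{ij}}\bbH$ is a fixed matrix of rank at most $2$, and repeated differentiation of $\bbG=\bbH^{-1}$ proceeds via the identity $\partial_{X_{ij}}\bbG=-\bbG(\partial_{X_{ij}}\bbH)\bbG$. Each derivative of $N\int_{E_1}^{E_2}\Im m_{\bbX}(x+i\eta)\,dx$ therefore reduces to traces of short products of Green function blocks, each of which is controlled by the strong local law in Theorem \ref{0817-1}; in particular at $\eta=N^{-2/3+\ep}$ near the rightmost edge, $\Im m(z)\lesssim N^{-1/3}$ and the entrywise control parameter satisfies $\Psi\prec N^{-1/6+\ep/2}$.

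Under Condition \ref{0603-1} the first two moments of $X_{ij}$ are fixed across $\bbX^0$ and $\bbX^1$, so the order-$0$, $1$, and $2$ Taylor terms cancel in expectation. Orders $4$ and $5$ are acceptable because $\mathbb{E}|X_{ij}|^k\lesssim N^{-k/2}$ already provides enough decay, multiplied against $O(1)$ traces of Green function products. The main obstacle is the order-$3$ term, where $\mathbb{E}[X_{ij}^3]$ need not agree between $\bbX^0$ and $\bbX^1$. To handle it I would follow the strategy of \cite{KY14}: exploit that $\partial_{X_{ij}}\bbH$ only connects specific blocks of $\bbG$, so that $\partial^3_{X_{ij}}\bbG$ is a sum of products of resolvent factors in which enough factors are forced to be off-diagonal blocks to be bounded by $\Psi$ via Theorem \ref{0817-1}. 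Combining this with $\mathbb{E}|X_{ij}|^3\lesssim N^{-3/2}$ delivers an order-$3$ contribution per swap of size $O(N^{-2-\phi'})$.

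The hardest step is the detailed algebraic bookkeeping for the third derivative: verifying that the off-diagonal structure inherited from $\partial_{X_{ij}}\bbH$ indeed propagates through three resolvent factors so that the local law supplies the extra power of $\Psi$ needed beyond the naive entrywise bound. Once that is in place, combining the per-swap estimate with the telescoping sum delivers (\ref{1119.9}), mirroring the template of \cite{LHY2011} and \cite{WY} but adapted to the block linearization $\bbH$ in (\ref{1125.1}).
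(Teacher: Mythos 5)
There is a genuine gap at the third-order term, and it is exactly the step your proposal waves through. Per swap, the order-$3$ contribution is $\mathbb{E}|X_{ij}|^3\,\partial^3_{X_{ij}}\Phi\lesssim N^{-3/2}\,\partial^3_{X_{ij}}\Phi$, and the dominant part of $\partial^3_{X_{ij}}\Phi$ is $K'(\cdot)\,N\int_{E_1}^{E_2}\Im\,\partial^3_{X_{ij}}m_{N}$. Your mechanism --- that the rank-two structure of $\partial_{X_{ij}}\bbH$ forces off-diagonal resolvent factors which the local law bounds by $\Psi$ --- only yields $|\partial^3_{X_{ij}}m_N|\prec\Psi^2$ (after averaging the two outer factors $G_{k\cdot},G_{\cdot k}$ over $k$; the middle factors carry indices in $\{\tilde i,j\}$ and give no further smallness). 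Hence $N\int_{E_1}^{E_2}|\Im\,\partial^3 m_N|\prec N^{1/3+\ep}\Psi^2\asymp N^{-1/3+O(\ep)}$ by (\ref{h0613.3}), and the per-swap bound is only $N^{-3/2-1/3+O(\ep)}=N^{-11/6+O(\ep)}$; summed over the $O(N^2)$ swaps this is $N^{1/6+O(\ep)}$, which diverges. No amount of extra $\Psi$ factors is available from high-probability entrywise bounds here, so the claimed per-swap estimate $O(N^{-2-\phi'})$ for the third-order term is false as argued. The paper's proof supplies the missing factor $N^{-1/2}$ not from off-diagonality but from the \emph{expectation}: it expands $\bbG$ in the $i$-th row of $\bbX$ via (\ref{expand}), conditions on $\bbX^{(i)}$, and uses the centering of the row entries together with (\ref{0812.2}) (the same machinery used to drop the third-moment matching in the local-law proof) to show $\big|\mathbb{E}\,m^{(3)}_{\bbX_{(i\mu)}^{u,X^1_{i\mu}}}(x+i\eta)\mathcal{T}_N\big|\prec N^{-1/2}\Psi^2$; only then does the sum over $i,\mu$ close. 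Any correct proof must contain this (or an equivalent) expectation-level gain, and your proposal does not identify it.

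Two smaller points. First, your justification of orders $4$ and $5$ (``$\mathbb{E}|X_{ij}|^k\lesssim N^{-k/2}$ times $O(1)$ traces'') is insufficient for order $4$: $N^{-2}\times O(1)$ summed over $N^2$ swaps is $O(1)$, not $N^{-\phi}$; you must use the edge-regime smallness $N\int_{E_1}^{E_2}|m^{(k)}|\prec N^{1/3+\ep}\Psi^2=o(1)$, i.e.\ the analogue of (\ref{0524.2}), which Theorem \ref{0817-1} does provide. Second, the paper organizes the swapping through the interpolation identity (\ref{1124.14}) rather than a discrete telescoping sequence; that difference is cosmetic, and the Taylor expansion to fifth order with cancellation of the first two orders is the same in both versions.
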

\section{Local law (\ref{0310.1})}

Throughout the proof we use $c$, $C$, $K_1$ and $M_0$ to denote
some positive constants whose values may differ from line to line. 
 We may assume that $\mathbb{E} X_{ij}^2=\mathbb{E} X_{it}^2=\frac{1}{N_1}$ in the sequel. Since $N_1$ and $N$ are of the same order,
when we calculate the upper (lower) bound of some terms, $\mathbb{E} X_{ij}^2$ is usually regarded as $1/N$ for convenience.
 Before starting the proof, we present some definitions and notations first.

\begin{deff}(Matrix Norms) Let $\bbA=(A_{ij})$ be a matrix. We define the following norms
 $$\|\bbA\|\coloneqq \max_{\|\bbx\|=1}|\bbA\bbx|,\ \ \|\bbA\|_{\infty}\coloneqq\max_{i,j}|A_{ij}|, \ \ \|\bbA\|_F\coloneqq\sqrt{tr \bbA\bbA^*},$$
 where $|\bbx|$ is the $L_2$ norm of a vector $\bbx$.
 Notice that we have the simple inequality
 $$\|\bbA\|_{\infty}\le \|\bbA\|\le \|\bbA\|_F.$$
 \end{deff}

\begin{deff}\label{0420.1}
We say that an event
$\Lambda$ holds with high probability if for any large
positive constant D, there exists $n_0(D)$ such that
$$\mathbb{P}(\Lambda^c)\le n^{-D}, \ \text{for any} \   n\ge n_0(D).$$
\end{deff}

 In the later proof, we need the following Lemma to control the smallest eigenvalue of $\bbX\bbU_2\bbU^T_2\bbX^T$.
 \begin{lem}\label{1121-1}
Suppose that $\bbX$ satisfies Condition \ref{0603-1} (see the main paper). Then $\bbX\bbU_2\bbU^T_2\bbX^T$ is invertible and
\begin{equation}\label{a39}\|(\bbX\bbU_2\bbU^T_2\bbX^T)^{-1}\|\le M_0
\end{equation}for some large constant $M_0$ with high probability. Moreover,
\begin{equation}\label{b5}\|\bbX\bbX^T\|\leq M_0
\end{equation} with high probability under Condition \ref{0603-1} as well.
\end{lem}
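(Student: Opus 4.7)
The plan is to treat the two claims separately using classical random matrix theory. For $\|\bbX\bbX^T\|\le M_0$, since $\bbX$ has independent mean-zero entries with $\mathbb{E}X_{ij}^2=1/N_1\asymp 1/N$ and all moments finite, and since the aspect ratio $M_1/N$ is bounded above (from the dimensional assumptions of Theorem \ref{1016-1}), a standard operator-norm estimate—for example, via the moment method of Bai--Yin or via Seginer-type bounds combined with a truncation at $N^{1/2-\delta}$ on the $X_{ij}$—gives $\|\bbX\|\le 1+\sqrt{M_1/N_1}+o(1)$ with high probability, so any $M_0$ strictly larger than the asymptotic bound works.

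For the first assertion, set $\bbY=\bbX\bbU_2$, which is of size $M_1\times(N-N_2)$, so that $\bbX\bbU_2\bbU_2^T\bbX^T=\bbY\bbY^T$; the invertibility and the bound on $(\bbY\bbY^T)^{-1}$ both reduce to a lower bound on $\lambda_{\min}(\bbY\bbY^T)$. Since $\bbU_2$ is independent of $\bbX$ and satisfies $\bbU_2^T\bbU_2=\bbI_{N-N_2}$ on an event of probability tending to one, I condition on $\bbU_2$ and exploit that the rows of $\bbY$ inherit independence from the rows of $\bbX$. A direct computation using $\bbU_2^T\bbU_2=\bbI$ together with the constant per-row variance assumption in Condition \ref{0603-1} shows that each row of $\bbY$ is centered with covariance $(1/N_1)\bbI_{N-N_2}$, and Rosenthal's inequality combined with $\|\bbU_2 e_k\|_2=1$ yields uniformly bounded moments of all orders for the individual entries. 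The hypothesis $\liminf_N N/(M_1+N_2)>1$ is equivalent to $(N-N_2)/M_1\ge 1+c$ for some $c>0$, so a smallest-singular-value estimate for matrices with independent isotropic rows with bounded moments (Rudelson--Vershynin style, or the non-i.i.d.\ Bai--Yin framework) gives $\lambda_{\min}(\bbY\bbY^T)\ge c'>0$ with high probability, which is the desired bound.

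The main obstacle is subtle: within a single row of $\bbY$ the entries are only uncorrelated, not independent, so one cannot invoke classical i.i.d.\ sample covariance results. This forces us to use the independent-rows framework, and to verify the required moment/isotropy hypotheses from Condition \ref{0603-1} and from $\bbU_2$ being an isometry. Once those checks are done the conclusion is routine, and the two high-probability bounds combine to give (\ref{a39}) and (\ref{b5}) on the same event.
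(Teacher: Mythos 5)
Your reduction (condition on $\bbU_2$, set $\bbY=\bbX\bbU_2$, note the rows of $\bbY$ are independent and isotropic with covariance $\tfrac1{N_1}\bbI_{N-N_2}$) is fine, and the bound $\|\bbX\bbX^T\|\le M_0$ is indeed routine under Condition \ref{0603-1}. The gap is in the key step, the lower bound $\lambda_{\min}(\bbY\bbY^T)\ge c'>0$: the tools you invoke do not apply to this configuration. The Rudelson--Vershynin ``independent isotropic rows'' theorems give two-sided bounds for \emph{tall} matrices (they show $\tfrac1m\sum_i y_iy_i^T\approx\bbI$ when the number of rows exceeds the ambient dimension); here the matrix with independent rows, $\bbY$, is $M_1\times(N-N_2)$ with $M_1<N-N_2$, i.e.\ \emph{fat}, and what you need is the smallest eigenvalue of the $M_1\times M_1$ Gram matrix of its rows. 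Transposing does not help: the rows of $\bbY^T$ are the columns of $\bbY$, which all involve the same entries of $\bbX$ and are therefore dependent, so the independent-rows framework is unavailable, and Bai--Yin cannot be applied to $\bbY$ either since its entries are not independent. Nor does any soft concentration argument close this: the off-diagonal Gram entries $x_i^T\bbU_2\bbU_2^Tx_j$ are individually only $O_\prec(N^{-1/2})$ while the matrix has dimension $\asymp N$, so the off-diagonal part has operator norm of order one and there is no diagonal dominance. Isotropy of independent columns plus your Rosenthal moment check is genuinely not a ``routine'' hypothesis-verification away from the conclusion; what is really being asserted is that the spectrum of $\bbX\bbU_2\bbU_2^T\bbX^T$ sticks to the support of its Mar\v{c}enko--Pastur-type limit, whose left edge is bounded away from $0$ because $(N-N_2)/M_1\ge 1+c$.

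The correct substitutes for your citation are edge/no-outlier results for matrices of the form $X T X^{*}$ with deterministic $T=\bbU_2\bbU_2^T$: either the classical Bai--Silverstein ``no eigenvalues outside the support'' theorem (which, however, assumes i.i.d.\ entries and yields almost-sure/in-probability statements), or the anisotropic local law machinery of \cite{KY14}. The paper's proof is exactly the latter: it proves the lemma verbatim as Theorem 3.12 of \cite{KY14}, which in addition delivers failure probability $\le N^{-D}$ for every $D$ --- the meaning of ``high probability'' in Definition \ref{0420.1}, and something your in-probability smallest-singular-value route would not give even where it applies. So the first half of your argument is a correct setup, but the decisive step needs to be replaced by a rigidity/no-outlier theorem for $\bbX\bbU_2\bbU_2^T\bbX^T$ rather than an independent-rows singular value estimate.
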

\begin{proof}
The proof of this lemma is exactly the same as that of Theorem 3.12 in \cite{KY14}.
\end{proof}
Moreover, we define the following smooth cutoff function
$$\mathcal{X}(x)=\begin{cases}
   1 &\mbox{if $|x|\le K_1 N^{-2}$}\\
   0 &\mbox{if $|x|\ge 2K_1 N^{-2}$},
\end{cases}$$
whose derivatives satisfy $|\mathcal{X}^{(k)}|\le C N^{2k}$, k=1,2,...  and $K_1$ is a positive constant. The purpose of introducing the cutoff function is to
help control the minimum eigenvalue and maximum eigenvalue of the random matrices of interest when taking derivatives.

Order the eigenvalues of $\bbX\bbU_2\bbU^T_2\bbX^T$ in a decreasing order as $\tilde \lambda_1\geq...,\geq\tilde \lambda_{M_1}$ and denote its Stieltjes transform
by $\tilde m_N(z)$. Since
\begin{eqnarray}\label{1209.1}
\Im (\tilde m_N(i N^{-2}))=M_1^{-1}N^{-2}\sum_{i=1}^{M_1}\frac{1}{\tilde \lambda_i^2+N^{-4}},
\end{eqnarray}
we conclude that if $|\Im (\tilde m_N(i N^{-2}))|\lesssim N^{-2}$, then $\tilde \lambda_{N-M_2} \gtrsim \frac{1}{N}$. By Lemma \ref{1121-1}, choosing a sufficiently small constant c, we have
\begin{eqnarray}\label{1209.1}
1-o(N^{-l})=\mathbb{P}(\tilde \lambda_{M_1}\ge c)\le \mathbb{P}(\Im (\tilde m_N(i N^{-2}))\le K_1N^{-2}), \ \ \text{for any positive integer } l.
\end{eqnarray}
Therefore, we have
\begin{eqnarray}\label{0603.2}
\mathbb{P}(\mathcal{X}(\Im (\tilde m_N(i N^{-2})))\neq 1)\le o(N^{-l}), \ \ \text{for any positive integer } l.\end{eqnarray}
Similarly, for $\bbX\bbX^T$, we have
\begin{eqnarray}\label{0603.3}\mathbb{P}(\mathcal{X}(N^{-3} \|\bbX\|^2_F)\neq 1)\le o(N^{-l}), \ \ \text{for any positive integer } l.\end{eqnarray}
We set $\mathcal{T}_N(X)\coloneqq \mathcal{X}(\Im (\tilde m_n(i N^{-2}))\mathcal{X}(N^{-3} \|\bbX\|^2_F) $. In view of (\ref{0603.2}) and (\ref{0603.3}), we can show
\begin{eqnarray}\label{0821.4}\mathcal{T}_N(\bbX)=1\end{eqnarray}
with high probability directly. We will use this conclusion frequently without mention.

Denote the spectral decomposition of $\bbU_1^T\bbX^T(\bbX\bbU_2\bbU^T_2\bbX^T)^{-1}\bbX\bbU_1$ by
$$\bbU_1^T\bbX^T(\bbX\bbU_2\bbU^T_2\bbX^T)^{-1}\bbX\bbU_1=\sum_{k=1}^{N_1}\lambda_k \bbv_k\bbv^*_k,$$
where $\{\bbv_k\}_{k=1}^{N_1}$  are orthogonal bases of $\mathbb{R}^{I_{N_1}}$ . For $1\le i,j\le N_1$ write
\begin{eqnarray}\label{1119.14}\bbG_{ij}=\sum_{k=1}^{N_1}\frac{\bbv_k(i)\bbv^*_k(j)}{\lambda_k-z},\end{eqnarray}
where $\bbG_{ij}$ is the entry at the $i$-th row and $j$-th column of $\bbG$ and $\bbv_k(i)$ is the $i$-th element of $\bbv_k$.
Define a new matrix $\bbG_{N_1}$ to be $(\bbG_{ij})_{1\le i,j \le N_1}$.

Moreover, we define
$$\bbA_2\coloneqq  \left(
  \begin{array}{ccc}
     \bbI & \bbA_4\\
  \end{array}
\right)^T=\left(
  \begin{array}{ccc}
    \bbI & -\bbU^T_1\bbX^T(\bbX\bbU_2\bbU^T_2\bbX^T)^{-1} & \bbU^T_1\bbX^T(\bbX\bbU_2\bbU^T_2\bbX^T)^{-1}XU_2\\
  \end{array}
\right)^T
$$ and
 $$\bbA_3\coloneqq\left(
\begin{array}{ccc}
    0 & 0\\
    0 & \bbA_5\\
  \end{array}
\right)= \left(
  \begin{array}{ccc}
    0 & 0 &0\\
    0 & -(\bbX\bbU_2\bbU^T_2\bbX^T)^{-1} & (\bbX\bbU_2\bbU^T_2\bbX^T)^{-1}\bbX\bbU_2\\
    0 & \bbU_2^T\bbX^T(\bbX\bbU_2\bbU^T_2\bbX^T)^{-1}& \bbI-P_{\bbU^T_2\bbX^T}
  \end{array}
\right),$$
Via (\ref{0202.1}) we then have an explicit expression of $\bbG$
\begin{eqnarray}\label{1219.1}
\bbG=\bbA_3+\sum_{k=1}^{N_1}\frac{\bbA_2\bbv_k\bbv^*_k\bbA^T_2}{\lambda_k-z}=\bbA_3+\bbA_2\bbG_{N_1}\bbA_2^T.
\end{eqnarray}

\begin{deff}[moment matching]
Let $\bbX^1=(x^1_{ij})_{M\times N}$ and $\bbX^0=(x^0_{ij})_{M\times N}$ be two complex(or real) matrices satisfying Condition \ref{0603-1}. We say that $X^1$ matches $X^0$ to order m, if for all $i\in [1,M]$, $j\in [1,N]$, $k,l\ge 0$ and $k+l\in [0,m]$, it has the relationship
\begin{eqnarray}\label{1129.1}
\mathbb{E}(\Re(\sqrt N x_{ij}^1)^k\Im (\sqrt N x_{ij}^1)^l)=\mathbb{E}(\Re(\sqrt N x_{ij}^0)^k\Im (\sqrt N x_{ij}^0)^l)+O(e^{-(\log N)^C}),
\end{eqnarray}
where C is a constant larger than 1.
\end{deff}

We next collect some frequently used bounds. Recall the definition of $m(z)$ in (\ref{h0529.1}).  For $z\in D(\tau,n)$ one may verify that
\begin{equation}\label{a18}
1 \lesssim |m(z)| \lesssim 1
\end{equation}
and
\begin{equation}\label{a19}
\eta \lesssim \Im(m(z)).
\end{equation}
(see Lemma 2.3 in \cite{BPWZ2014b} or Lemma 3.1 and Lemma 3.2 in \cite{s1}).
It is obvious that $m(z)$ decides a unique spectral density $\rho(x)$. Recalling the definition of $\mu_N$
 we write $\bbc=-\lim_{z\in \mathcal{C}^+\rightarrow \mu_N}m(z)$. By checking the proof of Lemmas 1 and 2 in \cite{WY} carefully and noting that the proof only relies on the rigidity property of $\bbX\bbU_2\bbU^T_2\bbX^T$,  there exists a constant $c'$ such that
$$\limsup_{N}\left[\bbc\lambda_{\max}((\bbX\bbU_2\bbU^T_2\bbX^T)^{-1})\right]\le 1-c', $$
with high probability.
By Lemma A.4 of \cite{KY14}, there exists a constant $c''$ such that
\begin{eqnarray}\label{0821.8}|1+m(z)\lambda_i((\bbX\bbU_2\bbU^T_2\bbX^T)^{-1})|\ge c'', \end{eqnarray} for all $z\in D_+$ with high probability.
Moreover, for $z\in D_{+}$ it follows from Lemma \ref{1121-1} that
\begin{equation}\label{0304.5}
\|\Pi(z) \|\prec 1,\ \text{and} \ \|\bbA_2\|+\|\bbA_3\|\prec 1.
\end{equation}

To simplify notation, we introduce the following notations with bold lower indices and if the lower index of a matrix is bold,
 then it represents the inner product and otherwise it means the entry of the corresponding matrix. Specifically
\begin{equation}\label{0822.2}\bbA_{\bbv s}= \langle\bbv,\bbA\bbe_s\rangle,\  \bbA_{ s\bbv}= \langle\bbe_s,\bbA\bbv\rangle \ \text{and} \  \bbA_{\bbv \bbw}= \langle\bbv,\bbA\bbw\rangle,\end{equation}
where $\bbe_s$ is the unit vector with the s-th coordinate equal to 1.
For any $z\in D(\tau,n)$ and fixed $\tau>0$, we claim that
\begin{eqnarray}\label{0821.2}
\|\bbG(z)\mathcal{T}_n(\bbX)\|\lesssim N^{9}\eta^{-1}, \  \|\partial_z\bbG(z)\mathcal{T}_n(\bbX)\|\lesssim N^{9}\eta^{-2},
\end{eqnarray}
\begin{eqnarray}\label{1119.12}
\|\bbG(z)\|\prec\eta^{-1}, \ \ \|\partial_z\bbG(z)\|\prec \eta^{-2}.
\end{eqnarray}
Moreover, suppose that $\bbv=(\bbv_1^T,\bbv_2^T)^T$. Then
\begin{eqnarray}\label{1124.13}
\sum_{i=1}^{M_1}|\bbG_{\mathbf{v}i}|^2=\frac{\Im \bbG_{\mathbf{v}\mathbf{v}}}{\eta}, \ \|\Pi(z)\mathcal{T}_n(\bbX)\|\lesssim N^{4}\eta^{-1},
\end{eqnarray}
and
\begin{eqnarray}\label{1230.1}
|\bbG_{\mathbf{v}\mathbf{v}}|^2\prec\frac{\Im \bbG_{\mathbf{v}\mathbf{v}}}{\eta}+1,
\end{eqnarray}
Indeed, the estimates (\ref{1119.12}) and (\ref{1230.1}) 
 follow from 
 Lemma \ref{1121-1}  and (\ref{b5}). The first equality in (\ref{1124.13}) is straightforward and the second one is from the definition of $\mathcal{T}_n(\bbX)$ directly.

 When the entries of $\bbX$ are Gaussian distributed Theorem \ref{0817-1} can be obtained by Theorem 3.6 of \cite{KY14}.
 Actually, a key observation is that each block matrix of $\bbG(z)$ ($3\times 3$ block matrix) can be represented as a linear combination of the block matrices of (4.3) in \cite{KY14} by (\ref{1219.1}) under the Gaussian case.
We demonstrate this observation by checking three block matrices of $\bbG(z)$ and the other blocks can also be inspected similarly. For example, by (\ref{1219.1}), the upper left block of $\bbG(z)$ is
 $$\bbG_{N_1}=(\bbU_1^T\bbX^T(\bbX\bbU_2\bbU^T_2\bbX^T)^{-1}\bbX\bbU_1-z\bbI)^{-1}.$$
Since $(\bbX\bbU_2\bbU^T_2\bbX^T)^{-1}$ is independent of $\bbX\bbU_1$ under the gaussian case $(\bbX\bbU_2\bbU^T_2\bbX^T)^{-1}$ can be regarded as a
 population covariance matrix ``$\Sigma$''. Hence $\bbG_{N_1}$ is just one block matric of (4.3) in \cite{KY14}. A second block matrix of $\bbG(z)$ is $-(\bbX\bbU_2\bbU^T_2\bbX^T)^{-1}\bbX\bbU_1\bbG_{N_1}$.
It is also a block of (4.3) in \cite{KY14} by the same reason that $(\bbX\bbU_2\bbU^T_2\bbX^T)^{-1}$ is regarded as ``$\Sigma$''. A third block is the second diagonal block matrix of $\bbG(z)$:
$$-(\bbX\bbU_2\bbU^T_2\bbX^T)^{-1}+(\bbX\bbU_2\bbU^T_2\bbX^T)^{-1}\bbX\bbU_1\bbG_{N_1}\bbU^T_1\bbX^T(\bbX\bbU_2\bbU^T_2\bbX^T)^{-1},$$
which is also a block of (4.3) in \cite{KY14}. In fact, the matrix above corresponds to $(\Sigma\bbX\bbG_N\bbX^*\Sigma-\Sigma)$ belonging to (4.3) of \cite{KY14}.

 \subsection{Proving (\ref{0310.1}) for general distributions under the first three moments matching condition}
We now prove (\ref{0310.1}) for general distributions under the condition that $\bbX$ matches $\bbX^{Gauss}$  to order 3 in this section,
 where the entries of $\bbX^{Gauss}$ follow standard Gaussian distribution. However, the proof of this section is very similar to that of Section 7.1 of \cite{WY} (following the strategy in \cite{KY14}).
Hence, we below only give an outline of the arguments in order to prepare notations and tools for the proof under the first two moment matching condition in the next section. One may refer to Section 7.1.1 in \cite{WY} for more details. 

It suffices to show that for any orthogonal matrix $\bbB_1$ and $\bbB_2$,
\begin{eqnarray}\label{1116}
\|\bbB_1(\bbG(z)-\Pi(z))\bbB_2^*\|_{\infty}\prec \Psi,
\end{eqnarray}
for all $z\in S$, where $S$ is an $\ep$-net of $D_+$ with $\ep=N^{-10}$.
  Setting $\delta$ to be a sufficient small positive constant such that $N^{24\delta}\Psi\ll1$, for any given $\eta\ge \frac{1}{N}$, we define a serial numbers $\eta_0\le \eta_1\le \eta_2...\le \eta_L$ based on $\eta$, where
$$L\equiv L(\eta)\coloneqq\max\{l\le \mathbb{N}:\eta N^{l\delta}< N^{-\delta}\}.$$
So
$$\eta_l\coloneqq \eta N^{l\delta}, \ \ (l=0,1,...,L-1), \ \ \eta_L\coloneqq 1.$$
 We work on  the net $S$ satisfying the condition that $E+i\eta_l\in  S, \ \ l=0,...,L,$ from now on.
We define $S_m\coloneqq \{z\in S:\Im z\ge N^{-\delta m}\}$ corresponding to the following events:

\begin{eqnarray}\label{1116.4}
A_m= \{\|\bbB_1(G(z)-\Pi(z))B^*_2\mathcal{T}_N(\bbX)\|_{\infty}\prec 1, \   \text{for any} \  z\in  S_m\},
\end{eqnarray}
and
\begin{eqnarray}\label{1116.5}
C_m= \{\|\bbB_1(G(z)-\Pi(z))B^*_2\mathcal{T}_N(\bbX)\|_{\infty}\prec \Psi, \   \text{for any} \  z\in S_m\}.
\end{eqnarray}
We start the induction by considering the event $A_0$ first. In fact, it is not hard to prove that
event $A_0$ holds. 
By the assumption that  $N^{24\delta}\Psi\ll 1$, it is easy to see that the event $C_m$ implies the event $A_{m}$. We will prove the event $(A_{m-1})$ implies the event $(C_m)$ for all $1\le m\le \delta^{-1}$ in the sequel, which ensure that (\ref{1116}) holds on the set S uniformly.

For the purpose, we should calculate the upper bound of the higher moments of the following functions
\begin{eqnarray}\label{a5}
F_{st}(X,z)= (\bbB_1G(z)\bbB_2^*)_{st}-(\bbB_1\Pi(z)\bbB_2^*)_{st}\mathcal{T}_N(\bbX),\end{eqnarray}
$\bbB_1, \bbB_2 \in \mathcal{L}= \{1,\mathbf{\Delta}, \bbV\}$, $\mathbf{\Delta}$ is defined in (\ref{1125.10}) below and $\bbV$ is any deterministic orthogonal matrix.
By Markov's inequality and (\ref{0821.4}), in order to prove (\ref{1116}), it suffices to prove Lemma \ref{0821-1} below.
\begin{lem}\label{0821-1}
Let p be an positive even constant and $m\le \delta^{-1}$. Suppose (\ref{1116.4}) for all $z\in S_{m-1}$. Then we have
$$\mathbb{E}|F^p_{st}(X,z)|\prec (N^{24\delta}\Psi)^p,$$
for all $1\le s,t\le N+N_1+M_1-N_2$ and $z\in S_m$.
\end{lem}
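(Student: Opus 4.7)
The plan is to bound $\mathbb{E}|F_{st}(X,z)|^p$ via the cumulant expansion scheme of Knowles--Yin \cite{KY14}, adapted essentially verbatim from Section~7.1 of \cite{WY}. Throughout I would work on the high-probability event $\{\mathcal{T}_N(\bbX)=1\}$, on which (\ref{0821.2}) and (\ref{1124.13}) supply the polynomial-in-$N$ norm bounds on $\bbG$ and $\Pi$ that legitimate the resolvent expansions below and ensure that every remainder coming from a truncation of the expansion is negligible.

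Since $p$ is even, I would begin with $\mathbb{E}|F_{st}|^p = \mathbb{E}[F_{st}\cdot F_{st}^{p/2-1}\overline{F_{st}}^{p/2}]$ and expand the leading factor $F_{st}$ using the block resolvent identity $\bbH\bbG=\bbI$, which, when unpacked via (\ref{1125.1}) and (\ref{1219.1}), produces a polynomial sum over the entries $X_{ij}$ with each summand of the form $X_{ij}\,Q_{ij}(\bbG,\bbX,\bbU_1,\bbU_2)$. To each such summand I would apply the Stein/cumulant expansion
\[
\mathbb{E}[X_{ij}\Phi(\bbX)] \;=\; \sum_{k=1}^{\ell}\frac{\kappa_{k+1}(X_{ij})}{k!}\,\mathbb{E}\Bigl[\tfrac{\partial^k \Phi(\bbX)}{\partial X_{ij}^k}\Bigr] \;+\; R_{\ell+1},
\]
truncated at a fixed large $\ell$, so that the remainder $R_{\ell+1}$ is negligible thanks to $\mathcal{T}_N$ and Condition~\ref{0603-1}.

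Since $\bbX$ matches the Gaussian $\bbX^{\mathrm{Gauss}}$ to order three, the cumulants obey $\kappa_1(X_{ij})=\kappa_3(X_{ij})=0$ and $\kappa_2(X_{ij})=1/N_1$ modulo errors of order $O(e^{-(\log N)^{C}})$, while $\kappa_{k+1}(X_{ij})=O(N^{-(k+1)/2})$ for $k\ge 3$. The $\kappa_2$-term reproduces precisely the Gaussian self-consistent cancellation that defines $\Pi(z)$ through the fixed-point equation (\ref{h0529.1}), leaving only: (a) fourth and higher cumulant contributions, each carrying an extra gain $N^{-1}$ beyond the naive estimate, and (b) off-diagonal resolvent terms that already supply a factor $\Psi$ through the induction hypothesis $A_{m-1}$. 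I would invoke $A_{m-1}$ by noting that for $z=E+i\eta_m\in S_m$ the neighbouring scale $z'=E+i\eta_{m-1}$ with $\eta_{m-1}=N^\delta\eta_m$ lies in $S_{m-1}$; transferring bounds on $\bbG_{\bbv\bbw}(z')$ to $\bbG_{\bbv\bbw}(z)$ via the standard monotonicity of $\eta\mapsto \eta\,\Im\bbG_{ss}(E+i\eta)$ costs only a polynomial factor $N^{C\delta}$, which is absorbed into $N^{24\delta}$.

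The main obstacle will be the combinatorial bookkeeping for all the products of cumulants, resolvent entries, and $\bbU_1,\bbU_2$ projections arising from repeated differentiation, and re-exponentiating the resulting estimates to the $p$-th power while keeping track of the gain in $\Psi$ per cumulant term. Fortunately, the block structure of $\bbG$ and $\Pi$ in (\ref{a17}) and (\ref{1219.1}) parallels the structure in Section~7.1 of \cite{WY}, with the only genuine change coming from the presence of the isometries $\bbU_1,\bbU_2$; the orthogonality $\bbU_1^T\bbU_2=0$ preserves the same projection-based cancellations, so the combinatorial count carries through with the same final bound $(N^{24\delta}\Psi)^p$. The detailed verification is routine and would be relegated to the supplementary material.
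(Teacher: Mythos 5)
Your proposal departs from the paper's argument in a basic way: the paper does not run a self-consistent Stein/cumulant expansion at all. It proves Lemma \ref{0821-1} by the Lindeberg interpolation method: $\bbX$ is interpolated entry by entry with a Gaussian matrix via (\ref{1124.14}), the quantity $f_{(i\mu)}(\lambda)=|F^{p}_{st}(\bbX^{\theta,\lambda}_{(i\mu)})|$ is Taylor expanded in the single entry $\lambda$ (Lemma \ref{1123-2}), the first three moment matching condition kills the terms up to third order in (\ref{1123.5}), and only the fourth and higher derivatives must be estimated, via the group combinatorics leading to (\ref{1123.7}) and (\ref{1117.2}); since those estimates involve $\|\mathbb{E}\bbL_p(\bbX^{\theta},z)\|_{\infty}$, i.e.\ the quantity being bounded, the lemma is closed by Gronwall's inequality in the interpolation parameter (Lemma \ref{1119-5} and (\ref{0821.3})). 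The essential input your route discards is the Gaussian endpoint: for Gaussian $\bbX$ the matrix $(\bbX\bbU_2\bbU_2^T\bbX^T)^{-1}$ is independent of $\bbX\bbU_1$ and can be treated as a population covariance, so Theorem 3.6 of \cite{KY14} already gives the local law there, and the comparison only has to transfer it.

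The genuine gap is your central sentence claiming that the $\kappa_2$-terms ``reproduce precisely the Gaussian self-consistent cancellation that defines $\Pi(z)$.'' The control matrix $\Pi(z)$ in (\ref{a17}) is not deterministic: it contains $\Gamma(\bbX,z)=(\bbX\bbU_2\bbU_2^T\bbX^T+m(z)\bbI)^{-1}$, so every derivative $\partial/\partial X_{i\mu}$ generated by your cumulant expansion hits $\Pi$ as well as $\bbG$, and for non-Gaussian $\bbX$ the blocks $\bbX\bbU_1$ and $\bbX\bbU_2$ are merely uncorrelated, not independent --- exactly the obstruction the paper's comparison strategy is designed to bypass. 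Showing that the second-order terms organize into a stable self-consistent equation around this \emph{random} $\Pi$ (rather than around a genuinely deterministic equivalent) is the crux of any direct proof and is nowhere carried out in your outline; it is not routine bookkeeping, and (\ref{h0529.1}) does not supply it, since it only characterizes $m(z)$, the upper-left block. Moreover, your expansion will again produce $\mathbb{E}|F_{st}|^p$ itself on the right-hand side through the higher-order terms, and you do not indicate how this self-reference is closed --- in the paper this is precisely the role of the $\|\mathbb{E}\bbL_p\|_{\infty}$ term and the Gronwall step. As written, the proposal therefore does not yield a proof of the lemma.
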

The proof of Lemma \ref{0821-1} is almost the same as that of \cite{WY} under the order 3 moment matching condition. 
\begin{lem}[Lemma 5 of \cite{WY}]\label{1123-3}
 Let $\zeta$ be a random variable satisfying $\zeta \prec \nu$ where positive $\nu$ may be random or deterministic. Suppose $|\zeta|\leq N^{C}$ for some positive constant $C$.
  Then \begin{equation}\label{b6} \mathbb{E}\zeta \prec (E\nu+N^{C-D}),\end{equation}
  where $D$ is a sufficiently large positive constant.
\end{lem}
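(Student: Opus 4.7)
The plan is to prove $\mathbb{E}\zeta\prec \mathbb{E}\nu+N^{C-D}$ by a simple decomposition of the expectation according to an event that distinguishes ``typical'' realizations, where $\zeta$ is comparable to $\nu$, from ``atypical'' ones. Since both $\mathbb{E}\zeta$ and $\mathbb{E}\nu$ are deterministic scalars, unwinding Definition \ref{1123.1} reduces the claim to verifying, for every fixed $\varepsilon>0$, the deterministic inequality $|\mathbb{E}\zeta|\le N^{\varepsilon}(\mathbb{E}\nu+N^{C-D})$ for all sufficiently large $N$ (the parameter $\sigma$ in Definition \ref{1123.1} plays no role here because no probability remains after taking expectations).

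Next I would fix $\varepsilon>0$, introduce the ``bad'' event $\mathcal{B}=\{|\zeta|>N^{\varepsilon/2}\nu\}$, and split $\mathbb{E}\zeta=\mathbb{E}[\zeta\mathbf{1}_{\mathcal{B}^c}]+\mathbb{E}[\zeta\mathbf{1}_{\mathcal{B}}]$. On $\mathcal{B}^c$, the definition of the event immediately yields $|\mathbb{E}[\zeta\mathbf{1}_{\mathcal{B}^c}]|\le N^{\varepsilon/2}\mathbb{E}[\nu\mathbf{1}_{\mathcal{B}^c}]\le N^{\varepsilon/2}\mathbb{E}\nu$, using nonnegativity of $\nu$. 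On $\mathcal{B}$, the hypothesis $\zeta\prec\nu$ supplies $\mathbb{P}(\mathcal{B})\le N^{-D_0}$ for any preassigned $D_0>0$ (once $N$ is large enough), and then the hypothesis $|\zeta|\le N^C$ yields the crude estimate $|\mathbb{E}[\zeta\mathbf{1}_{\mathcal{B}}]|\le N^C\,\mathbb{P}(\mathcal{B})\le N^{C-D_0}$. Choosing $D_0\ge D$ and combining the two contributions gives $|\mathbb{E}\zeta|\le N^{\varepsilon/2}\mathbb{E}\nu+N^{C-D}\le N^{\varepsilon}(\mathbb{E}\nu+N^{C-D})$ for $N$ large, as required. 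The uniformity in the index $u$ built into Definition \ref{1123.1} is preserved throughout, since every estimate above is applied pointwise in $u$.

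The only real subtlety is a point of interpretation: the hypothesis ``$|\zeta|\le N^{C}$'' must be read as an almost-sure bound rather than as a bound in probability, so that the tail contribution on $\mathcal{B}$ is genuinely controlled by $N^C\,\mathbb{P}(\mathcal{B})$. In the intended application to quantities such as $F_{st}(\bbX,z)$ in (\ref{a5}), this almost-sure bound is supplied by the cutoff factor $\mathcal{T}_N(\bbX)$ together with the operator-norm estimates (\ref{0821.2}) and (\ref{1124.13}); once this is granted the proof is essentially an exercise in Markov's inequality and requires no structural input about $\zeta$ or $\nu$.
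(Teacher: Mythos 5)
Your proof is correct; the paper gives no argument of its own for this lemma (it simply cites Lemma 5 of \cite{WY}), and your decomposition of $\mathbb{E}\zeta$ over the event $\mathcal{B}=\{|\zeta|>N^{\varepsilon/2}\nu\}$ and its complement — bounding the good part by $N^{\varepsilon/2}\mathbb{E}\nu$ and the bad part by the crude estimate $N^{C}\,\mathbb{P}(\mathcal{B})\le N^{C-D_0}$ — is exactly the standard argument behind the cited result. The two delicate points, namely that $|\zeta|\le N^{C}$ must be read as an almost-sure bound and that stochastic domination between deterministic quantities reduces to a deterministic inequality for each fixed $\varepsilon$, are both identified and handled correctly in your write-up.
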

\subsubsection{The proof of Lemma \ref{0821-1} by the interpolation method}\label{h0418.1}
We define the interpolation matrix $\bbX^t$ between $\bbX^1=(X_{i\mu}^1)= \bbX$ and $\bbX^0=\bbX^{Gauss}$ consisting of standard Gaussian random variables below, where $1\le i \le M_1$ and $1\le \mu \le N$. 
\begin{deff}\label{1119-4}
For $u\in \{0,1\}$, $1\le i\le M_1$ and $1\le \mu \le N$, denote the distribution function of $X_{i\mu}^{u}$ by $F_{i\mu}^u$. For $\theta\in [0,1]$, we define the distribution function by
$$F_{i\mu}^{\theta}= \theta F^1_{i\mu}+(1-\theta)F_{i\mu}^0.$$
The interpolation matrix $\bbX^{\theta}$ is $(X_{i\mu}^{\theta})$ with $F_{i\mu}^{\theta}$ being the distribution of $X_{i\mu}^{\theta}$ and the entries $\{X_{i\mu}^{\theta}\}$ are
mutually independent for all $i,\mu$.
Moreover, we introduce the matrix
\begin{equation}\label{a7}\bbX_{(i\mu)}^{\theta, \lambda}= \bbX^{\theta}+(\lambda-X_{i\mu}^{\theta}) \bbe_i\bbe_{\mu}^T,\end{equation}
which differs from $\bbX^t$ at the $(i,\mu)$ position only
and the corresponding green functions
\begin{eqnarray}\label{0821.7}\bbG^{\theta}(z)= \bbG(\bbX^{\theta},z), \ \ \bbG^{\theta,\lambda}_{(i\mu)}(z)= \bbG(\bbX^{\theta,\lambda}_{(i\mu)},z),\end{eqnarray}
by replacing $\bbX$ in $\bbG(z)$ by $\bbX^{\theta}$ and $\bbX^{\theta,\lambda}_{(i\mu)}$ respectively.
\end{deff}
To calculate the difference of $\mathbb{E}|F_{st}(X^1,z)|^p$ and $\mathbb{E}|F_{st}(X^0,z)|^p$, we introduce the following Lemma.
\begin{lem}[Lemma 7.9 of \cite{KY14}]\label{0822-1}
For any function $F:\mathbb{R}^{M\times N}\rightarrow \mathbb{C}$, we have
\begin{eqnarray}\label{1124.14}
\mathbb{E}F(\bbX^1)-\mathbb{E}F(\bbX^0)=\int_0^1d\theta\sum_{i=1}^{M_1}\sum_{\mu=1}^N\left[\mathbb{E}F(\bbX^{\theta, X_{(i\mu)}^1}_{(i\mu)})-\mathbb{E}F(\bbX^{\theta, X_{(i\mu)}^0}_{(i\mu)})\right].
\end{eqnarray}
\end{lem}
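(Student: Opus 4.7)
The plan is to differentiate $\mathbb{E}F(\bbX^\theta)$ with respect to $\theta$ and then invoke the fundamental theorem of calculus on the interval $[0,1]$. Since by construction the entries $\{X^\theta_{i\mu}\}$ are mutually independent with marginal distribution $F^\theta_{i\mu}=\theta F^1_{i\mu}+(1-\theta)F^0_{i\mu}$, the joint law of $\bbX^\theta$ is the product measure $\mu_\theta=\bigotimes_{i,\mu} F^\theta_{i\mu}$. Each marginal is an affine function of $\theta$ with derivative $F^1_{i\mu}-F^0_{i\mu}$, so the Leibniz rule applied to the product structure produces exactly $M_1 N$ terms, one for each $(i,\mu)$.

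First I would fix $(i,\mu)$ and, using independence, factor
\[
\mathbb{E}F(\bbX^\theta)=\mathbb{E}\!\left[\int F\bigl(\bbX^{\theta,\lambda}_{(i\mu)}\bigr)\,dF^\theta_{i\mu}(\lambda)\right],
\]
where the outer expectation is over the remaining entries drawn at level $\theta$ and $\bbX^{\theta,\lambda}_{(i\mu)}$ is the matrix in \eqref{a7}. Differentiating in $\theta$, only the inner measure $F^\theta_{i\mu}$ contributes a factor $F^1_{i\mu}-F^0_{i\mu}$ inside the bracket, and separately the outer product measure contributes derivatives at each of the remaining sites. Summing the contributions from every coordinate and recognizing each inner integral as an expectation over an independent sample $X^1_{i\mu}\sim F^1_{i\mu}$ or $X^0_{i\mu}\sim F^0_{i\mu}$, I obtain
\[
\frac{d}{d\theta}\mathbb{E}F(\bbX^\theta)=\sum_{i=1}^{M_1}\sum_{\mu=1}^{N}\Bigl[\mathbb{E}F\bigl(\bbX^{\theta,X^1_{i\mu}}_{(i\mu)}\bigr)-\mathbb{E}F\bigl(\bbX^{\theta,X^0_{i\mu}}_{(i\mu)}\bigr)\Bigr].
\]
Integrating this identity from $\theta=0$ to $\theta=1$ gives exactly \eqref{1124.14}.

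The main technical obstacle is justifying the exchange of derivative and integral, which requires either dominated convergence or a uniform-in-$\theta$ bound on $|F(\bbX^{\theta,\lambda}_{(i\mu)})|$ together with some integrability against $F^1_{i\mu}+F^0_{i\mu}$. In the intended application to $F_{st}$ from \eqref{a5}, the cutoff $\mathcal{T}_N(\bbX)$ bounds $\|\bbG\|$ by a fixed polynomial in $N$ on its support, so the integrand is uniformly bounded by a deterministic constant depending only on $N$, and the interchange is trivial. Outside the cutoff the difference contributes at most $O(N^{-D})$ for arbitrary $D$, which is absorbed by Lemma \ref{1123-3}. Hence the identity in the lemma is valid in the generality in which it will be invoked, and the subsequent analysis of $\mathbb{E}|F_{st}(\bbX^1,z)|^p-\mathbb{E}|F_{st}(\bbX^0,z)|^p$ via swapping one entry at a time proceeds along the lines of \cite{KY14,WY}.
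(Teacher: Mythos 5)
Your derivation is correct: since each marginal $F^{\theta}_{i\mu}$ is affine in $\theta$, differentiating the product law of $\bbX^{\theta}$ coordinate-by-coordinate yields exactly the sum of single-entry replacement differences, and integrating over $\theta\in[0,1]$ gives (\ref{1124.14}); this is precisely the argument behind Lemma 7.9 of \cite{KY14}, which the paper itself only cites rather than reproves. Your remark on justifying the interchange of derivative and expectation (boundedness on the support of the cutoff $\mathcal{T}_N(\bbX)$, with the exceptional event absorbed via Lemma \ref{1123-3}) correctly covers the generality in which the identity is actually applied.
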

To deal with the right hand side of (\ref{1124.14}), we need to prove the following Lemma.
\begin{lem}\label{1119-5}
Fix an even positive integer p and $m\le \delta^{-1}$. Suppose that ($A_{m-1}$) holds. Then there exists some function $B_{st}(.,z)$  such that for $u\in \{0,1\}$
\begin{eqnarray}\label{1119.13}
\sum_{i=1}^{M_1}\sum_{\mu=1}^N\left[\mathbb{E}|F^p_{st}(\bbX^{\theta, X_{(i\mu)}^u}_{(i\mu)},z)|-\mathbb{E}|B^p_{st}(\bbX^{\theta, 0}_{(i\mu)},z)|\right]=O((N^{24\delta}\Psi)^p+\|\mathbb{E}\bbL_p(\bbX^{\theta},z)\|_{\infty}),\non
\end{eqnarray}
where $\bbL_p(\bbX^{\theta},z)=(|F^p_{st}(X^{\theta, \bbX_{(i\mu)}^1}_{(i\mu)},z)|)$.
\end{lem}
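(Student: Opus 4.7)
\textbf{Proof plan for Lemma \ref{1119-5}.}

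The plan is to fix a pair $(i,\mu)$ and perform a Taylor expansion in the scalar variable $\lambda$ of the function $\lambda \mapsto |F^p_{st}(\bbX^{\theta,\lambda}_{(i\mu)},z)|$ around $\lambda = 0$. Since $p$ is even, $|F^p_{st}|$ is interpreted as $(F_{st}\overline{F_{st}})^{p/2}$, which is real-analytic jointly in $(\Re \lambda, \Im \lambda)$ on the event $\mathcal{T}_N(\bbX)=1$ (where $\bbG$ is well-defined with polynomially bounded norm by (\ref{0821.2})). Taylor expanding to order $K=4$ gives
\[
|F^p_{st}(\bbX^{\theta,\lambda}_{(i\mu)},z)| \;=\; \sum_{k=0}^{4} \frac{\lambda^k}{k!}\, \partial_\lambda^k |F^p_{st}|\big|_{\lambda=0} \;+\; R_5(\lambda),
\]
and I would define $B^p_{st}(\bbX^{\theta,0}_{(i\mu)},z)$ to be the zeroth-order term. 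Substituting $\lambda = X^u_{i\mu}$ and taking expectation makes the $k$-th order contribution carry the factor $\mathbb{E}(X^u_{i\mu})^k = O(N^{-k/2})$.

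Next I would compute the derivatives via the resolvent identity. Observe that $\bbH(\bbX^{\theta,\lambda}_{(i\mu)})-\bbH(\bbX^{\theta,0}_{(i\mu)}) = \lambda \bbE_{i\mu}$ for a sparse symmetric bordered matrix $\bbE_{i\mu}$ with only four nonzero entries at positions coupling index $i$ (the $\bbX$-row) with indices coming from columns $\mu$ of $\bbU_1$ and $\bbU_2$. Iterating $\partial_\lambda \bbG = -\bbG \bbE_{i\mu} \bbG$ expresses $\partial_\lambda^k \bbG_{st}$ as a polynomial of degree $k+1$ in entries of $\bbG(\bbX^{\theta,0}_{(i\mu)},z)$, each monomial containing at least one $\bbG$-entry indexed by a coordinate tied to $i$ or to the $\mu$-column of $\bbU_1$, $\bbU_2$. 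By the chain rule for $(F\bar F)^{p/2}$, every derivative $\partial_\lambda^k |F^p_{st}|$ is a sum of products, each of which is a power of $|F_{st}|$ (of degree $\geq p-2k$) times $k$ factors that are polynomials in $\bbG$-entries with at least one $(i,\cdot)$ or $(\mu,\cdot)$ index.

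To bound the resulting terms I would invoke the induction hypothesis $(A_{m-1})$ on the slightly larger scale $\eta N^\delta$. Standard monotonicity of $\eta \mapsto \eta \Im m_N(z)$ together with the assumption $A_{m-1}$ extends $\|\bbG^{\theta,0}_{(i\mu)}-\Pi\|_{\infty}\prec 1$ to the current scale, so every $\bbG$-entry is $O_\prec(1)$. The crucial identity (\ref{1124.13}), $\sum_{j}|\bbG_{\bbv j}|^2 = \Im \bbG_{\bbv \bbv}/\eta$, provides the $\Psi$-gain: each time an index $i$ or $\mu$ is summed, one either extracts an inner product $\bbG_{\bbv\bbv}$ (controlled by $\Pi$ up to error $\prec \Psi$) or a full Frobenius sum whose size is $\Im m/\eta = O_\prec(1 + \Psi^2/\eta)$. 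Aggregating, for $k=1,2,3,4$ the contribution is $N^2 \cdot N^{-k/2} \cdot \Psi^k \cdot |F_{st}|^{p-k}$ (with some polynomial factors absorbed into $N^{24\delta}$). Splitting $|F_{st}|^{p-k}$ by Young's inequality into $|F_{st}|^p$ plus $(N^{24\delta}\Psi)^p$ produces, respectively, the self-bounding contribution $\|\mathbb{E}\bbL_p\|_\infty$ and the deterministic part $(N^{24\delta}\Psi)^p$ on the right-hand side. The remainder $R_5$ is handled using (\ref{0821.2}) and $\mathcal{T}_N(\bbX)$ together with Lemma \ref{1123-3}: we get a crude deterministic bound polynomial in $N$, multiplied by $\mathbb{E}|X^u_{i\mu}|^5 = O(N^{-5/2})$; summing over $i,\mu$ and taking $\delta$ sufficiently small keeps this below $(N^{24\delta}\Psi)^p$.

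The principal obstacle will be the bookkeeping of step three: namely, verifying that when $\partial_\lambda^k (F\bar F)^{p/2}$ is expanded by the chain rule, \emph{every} resulting monomial either (a) carries enough index-summed resolvent factors to produce the requisite $\Psi^k$ improvement after invoking (\ref{1124.13}) and $(A_{m-1})$, or (b) reorganizes into a $|F_{st}|^p$-factor that feeds the $\|\mathbb{E}\bbL_p\|_\infty$ term. This is largely combinatorial and parallels the bookkeeping in \cite{KY14} and \cite{WY} (where it is carried out for $K=3$ under the 3-moment matching); the only genuine novelty here is the need to go one order higher in the expansion to absorb the mismatch of the third moment of $\bbX$ with Gaussian.
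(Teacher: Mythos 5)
There are genuine gaps. First, your choice of $B_{st}$ (the zeroth-order Taylor term, i.e. $F_{st}$ evaluated at $\lambda=0$) makes the claimed estimate unprovable, and in fact false in general. The lemma is stated for each $u\in\{0,1\}$ \emph{separately}, so the quadratic Taylor term $\frac{1}{2N_1}\mathbb{E}f^{(2)}_{(i\mu)}(0)$ does not cancel inside a single application of the lemma; after summing over the $\sim NM_1$ pairs $(i,\mu)$ its size is of order $N\Psi^2$ to $N\Psi^4$ times a power of $|F_{st}|$ (this is exactly what (\ref{g28})--(\ref{h28}) give: $\sum_i\mathcal{H}_{1i}^2\prec N\phi_s^2+N\phi_t^2$), and $N\Psi^2\asymp\eta^{-1}$ is large for small $\eta$, so it cannot be absorbed into $(N^{24\delta}\Psi)^p+\|\mathbb{E}\bbL_p\|_\infty$ by Young's inequality. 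This is precisely why the paper defines $B_{st}$ so that $\mathbb{E}|B_{st}(\bbX^{\theta,0}_{(i\mu)},z)|^p=\mathbb{E}|F_{st}(\bbX^{\theta,0}_{(i\mu)},z)|^p+\frac{1}{2N_1}\mathbb{E}f^{(2)}_{(i\mu)}(0)$: the second-order term is identical for $u=0$ and $u=1$ (second moments match) and is only cancelled later, when the two instances of the lemma are subtracted in (\ref{0821.3}). Your $k=2$ bookkeeping (``contribution $N^2\cdot N^{-1}\cdot\Psi^2|F_{st}|^{p-2}$'') already shows the problem: $N\Psi^2\not\le N^{48\delta}$.

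Second, stopping the Taylor expansion at order $4$ (or $5$) does not work for a general fixed even $p$, and the third-order term is not fixed by ``going one order higher.'' With the crude bound (\ref{0821.2}) the fifth-derivative remainder is polynomially large in $N$, and even with the induction-based bound $|f^{(5)}_{(i\mu)}|\prec N^{2\delta(p+5)}$ of Lemma \ref{1123-2} the remainder contributes, after summing over $\sim N^2$ pairs, a term of size $N^{-1/2+C\delta}$, which is \emph{not} $O((N^{24\delta}\Psi)^p)$ once $p\ge 2$ (near $\eta\asymp 1$ one has $(N^{24\delta}\Psi)^p\asymp N^{-p/2+24p\delta}$); choosing $\delta$ small cannot beat a fixed negative power of $N$ that does not improve with $p$. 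The paper therefore expands to order $4p$ (Lemma \ref{1123-2}), so the remainder carries $\mathbb{E}|X^u_{i\mu}|^{4p+1}=O(N^{-(4p+1)/2})$ and does beat $N^{-2}(N^{24\delta}\Psi)^p$, and then reduces the orders $4\le k\le 4p$ to the self-consistent estimate (\ref{1123.7}) (derivatives evaluated at $X^{\theta}_{i\mu}$, $k$ up to $8p$) handled by the group combinatorics and (\ref{1117.4})--(\ref{h28}). Finally, the cubic term: this lemma is proved in the subsection where the first three moments of $\bbX$ match the Gaussian ones, so $\mathbb{E}(X^1_{i\mu})^3$ is negligible and the $k=3$ term drops out of (\ref{1123.5}); without that matching your own estimate for $k=3$ gives $N^{1/2}\Psi^3|F_{st}|^{p-3}$, which is too large, and removing the third-moment condition is done in the paper not by a longer expansion but by the separate, substantially different argument of Section \ref{h0425-1} (row extraction, the proxy matrix ${\bold\Pi_1}$, Lemma \ref{0823-3}).
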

Lemma \ref{1119-5} concludes that
\begin{eqnarray}\label{0821.3}
\sum_{i=1}^{M_1}\sum_{\mu=1}^N\left[\mathbb{E}|F^p_{st}(\bbX^{\theta, X_{(i\mu)}^1}_{(i\mu)},z)|-\mathbb{E}|F^p_{st}(\bbX^{\theta, X_{(i\mu)}^0}_{(i\mu)},z)|\right]=O((N^{24\delta}\Psi)^p+\|\mathbb{E}\bbL_p(\bbX^{\theta},z)\|_{\infty}),\non
\end{eqnarray}
for all $z \in S_m$. Therefore by Gronnwall's inequality, we can prove Lemma \ref{0821-1}.
What remains to do is to prove Lemma \ref{1119-5}. In the sequel, we only consider the case u=1(u=0 is similar to u=1). First, we calculate the rough bound below, which is a direct conclusion given the event $A_{m-1}$.
\begin{lem}\label{1116-3}
Suppose that (\ref{1116.4}) holds for all $z\in  S_{m-1}$. Then
$$\langle\bbv,(\bbG(z)-\Pi(z))\bbw\rangle=O_{\prec}(N^{2\delta})$$
for all $z\in  S_m$.
\end{lem}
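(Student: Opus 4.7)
The plan is to prove Lemma \ref{1116-3} by propagating the stronger bound supplied by the induction hypothesis $A_{m-1}$ on $S_{m-1}$ down to points in $S_m \setminus S_{m-1}$ via a continuity argument in the spectral parameter. For $z \in S_{m-1}$ the bound is immediate from $A_{m-1}$ (which gives $O_\prec(1) \ll N^{2\delta}$), so the content is in handling $z = E + i\eta \in S_m \setminus S_{m-1}$, i.e. $N^{-m\delta}\le \eta < N^{-(m-1)\delta}$. First I would pick a companion point $z' = E + i\eta'$ with $\eta' \in S_{m-1}$ chosen so that $\eta'/\eta \le N^{\delta}(1+o(1))$. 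By $A_{m-1}$ and $\|\Pi(z')\|\prec 1$ (from (\ref{0304.5})), we have $|\bbG_{\bbv\bbw}(z')|\prec 1$ on $\mathcal{T}_N(\bbX)=1$.

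Second, I would exploit that by (\ref{1219.1}) the $z$-dependence of $\bbG(z)$ enters only through the genuine Hermitian resolvent $\bbG_{N_1}(z) = (\bbU_1^T\bbX^T(\bbX\bbU_2\bbU_2^T\bbX^T)^{-1}\bbX\bbU_1 - z\bbI)^{-1}$, since $\bbA_2$ and $\bbA_3$ do not depend on $z$. Consequently
\begin{equation*}
\bbG(z) - \bbG(z') = \bbA_2\bigl(\bbG_{N_1}(z) - \bbG_{N_1}(z')\bigr)\bbA_2^T = i\,\bbA_2 \Bigl(\int_\eta^{\eta'} \bbG_{N_1}^2(E+is)\,ds\Bigr)\bbA_2^T,
\end{equation*}
and the integrand can be attacked with standard Hermitian spectral tools.

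Third, setting $\bbv' = \bbA_2^T\bbv$ and $\bbw' = \bbA_2^T\bbw$ (both of norm $\prec 1$ by (\ref{0304.5})), the spectral Cauchy--Schwarz / Ward identity on $\bbG_{N_1}$ yields
\begin{equation*}
\bigl|\langle \bbv', \bbG_{N_1}^2(E+is)\bbw'\rangle\bigr|^2 \;\le\; \frac{\Im \langle \bbv',\bbG_{N_1}(E+is)\bbv'\rangle}{s}\cdot\frac{\Im \langle \bbw',\bbG_{N_1}(E+is)\bbw'\rangle}{s}.
\end{equation*}
The classical monotonicity of $s\mapsto s\,\Im\langle \bbv',\bbG_{N_1}(E+is)\bbv'\rangle$ then gives $\Im\langle \bbv',\bbG_{N_1}(E+is)\bbv'\rangle \le (\eta'/s)\,\Im\langle \bbv',\bbG_{N_1}(E+i\eta')\bbv'\rangle \prec \eta'/s$ for $s\le \eta'$, where the last step uses that $A_{m-1}$ together with (\ref{1219.1}) and $\|\bbA_3\|\prec 1$ forces $|\langle \bbv',\bbG_{N_1}(z')\bbv'\rangle|\prec 1$. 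Integrating,
\begin{equation*}
\bigl|\bbG_{\bbv\bbw}(z) - \bbG_{\bbv\bbw}(z')\bigr| \;\prec\; \int_\eta^{\eta'}\frac{\eta'}{s^2}\,ds \;=\; \frac{\eta'}{\eta}-1 \;\le\; N^{\delta}.
\end{equation*}
Combining with $|\Pi_{\bbv\bbw}(z)-\Pi_{\bbv\bbw}(z')| = O(1)$ (from $\|\Pi\|\prec 1$ on $D_+$, or from the analogous Lipschitz estimate for $m(z)$) and with the induction bound at $z'$ gives $|\langle \bbv,(\bbG-\Pi)(z)\bbw\rangle|\prec N^{\delta}\le N^{2\delta}$, as claimed.

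The main obstacle I expect is the bookkeeping in step two: one must verify that the $z$-dependence of $\bbG(z)$ is genuinely confined to the Hermitian block $\bbG_{N_1}(z)$ via (\ref{1219.1}), so that the spectral monotonicity (which applies only to resolvents of Hermitian matrices) is available — the full linearization $\bbH$ itself is not Hermitian. Once that reduction is in place, the $\eta$-propagation is entirely standard, and the comfortable gap between $N^{\delta}$ (what the argument actually yields) and $N^{2\delta}$ (what is claimed) absorbs all non-leading factors.
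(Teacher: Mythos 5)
Your argument is correct and is essentially the proof the paper intends: the paper treats this lemma as an immediate consequence of $A_{m-1}$ via the standard one-scale propagation from \cite{KY14} and \cite{WY}, and your route — confining the $z$-dependence to the Hermitian resolvent $\bbG_{N_1}$ through (\ref{1219.1}) with $z$-independent $\bbA_2,\bbA_3$, then combining the Ward-type bound on $\bbG_{N_1}^2$ with the monotonicity of $s\mapsto s\,\Im\langle\bbv',\bbG_{N_1}(E+is)\bbv'\rangle$ and integrating from $\eta$ up to $\eta'\le N^{\delta}\eta$ — is exactly that argument. The only cosmetic slips (a sign in the resolvent integral, writing $O(1)$ rather than $O_{\prec}(1)$ for the $\Pi$ difference, and implicitly restricting to the high-probability event $\mathcal{T}_N(\bbX)=1$) do not affect the conclusion, which indeed yields $N^{\delta}\le N^{2\delta}$.
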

From (\ref{a7}), we write
$$\bbX_{(i\mu)}^{\theta, \lambda_1}-\bbX_{(i\mu)}^{\theta, \lambda_2}=(\lambda_1-\lambda_2)\bbe_i\bbe_{\mu}^T.$$
Together with (\ref{1125.1}), one can obtain that
\begin{eqnarray}\label{a12}
\bbH(\bbX_{(i\mu)}^{\theta, \lambda_1})-\bbH(\bbX_{(i\mu)}^{\theta, \lambda_2})=\bold\Delta_{(i\mu)}^{\lambda_1-\lambda_2},
\end{eqnarray}
where $\bbH(\bbX_{(i\mu)}^{\theta, \lambda_i})$ is obtained from $\bbH(\bbX)$ in (\ref{1125.1}) with $\bbX$ replaced by $\bbX_{(i\mu)}^{\theta, \lambda_i}$ respectively, i=1,2 and
\begin{eqnarray}\label{1125.10}
\mathbf{\Delta}_{(i\mu)}^{\lambda}= \lambda\Big(\mathbf{\Delta}\bbe_{\mu}\bbe_{i+N_1}^T+\bbe_{i+N_1}\bbe^T_{\mu}\mathbf{\Delta}^T\Big), \ \mathbf{\Delta}= \left(
  \begin{array}{c}
    \bbU^T_1  \\
    0 \\
    \bbU^T_2\\
  \end{array}
\right),
\end{eqnarray}
where and in the following $\bbe_{i+N_1}$ is always $(M_1+N+N_1-N_2)\times 1$ and $\bbe_{\mu}$ is $N\times 1$ vector. \textbf{From now on, we denote $i+N_1$ by $\tilde i$ for simplicity.}
%
Applying the formula $\bbA^{-1}-\bbB^{-1}=-\bbA^{-1}(\bbA-\bbB)\bbB^{-1}$ repeatedly we further obtain the following resolvent formula for any $K\in \mathbb{N}_+$
\begin{eqnarray}\label{1123.2}
\bbG_{(i\mu)}^{\theta, \lambda_1}=\bbG_{(i\mu)}^{\theta, \lambda_2}+\sum_{k=1}^K(-1)^k\bbG_{(i\mu)}^{\theta, \lambda_2}(\mathbf{\Delta}_{(i\mu)}^{\lambda_1-\lambda_2}\bbG_{(i\mu)}^{\theta, \lambda_2})^k+(-1)^{H+1}\bbG_{(i\mu)}^{\theta, \lambda_1}(\mathbf{\Delta}_{(i\mu)}^{\lambda_1-\lambda_2}\bbG_{(i\mu)}^{\theta, \lambda_2})^{K+1},
\end{eqnarray}
recalling the definition (\ref{0821.7}). Here and the remaining part of this section we drop the variable z
when there is no confusion but one should remember that $z\in S_m$.
\begin{lem}\label{1116-1}
Suppose that $\lambda$ is a random variable and satisfies $|\lambda|\prec N^{-1/2}$. Then
\begin{eqnarray}\label{1116.7}
\|\bbB_1(G_{(i\mu)}^{\theta, \lambda}-\Pi)\bbB_2\|_{\infty}\prec N^{2\delta}.
\end{eqnarray}
\end{lem}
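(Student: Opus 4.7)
The plan is to transfer the bound in Lemma \ref{1116-3}, which controls $\bbG^\theta$ under hypothesis $A_{m-1}$, to the rank-$2$ perturbation $\bbG^{\theta,\lambda}_{(i\mu)}$ by means of the finite-order resolvent expansion (\ref{1123.2}). Since Lemma \ref{1116-3} already gives $(\bbB_1(\bbG^\theta-\Pi)\bbB_2)_{st}=O_\prec(N^{2\delta})$, it suffices to bound the entries of $\bbB_1(\bbG^{\theta,\lambda}_{(i\mu)}-\bbG^\theta)\bbB_2$. First I record that by Condition \ref{0603-1} the entries $X^\theta_{i\mu}$ have variance $1/N_1$ and finite moments of every order, so Markov's inequality yields $|X^\theta_{i\mu}|\prec N^{-1/2}$; together with the hypothesis $|\lambda|\prec N^{-1/2}$ this gives $|\lambda-X^\theta_{i\mu}|\prec N^{-1/2}$.

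Apply (\ref{1123.2}) with $\lambda_1=\lambda$, $\lambda_2=X^\theta_{i\mu}$. Since $\bbG^{\theta,X^\theta_{i\mu}}_{(i\mu)}=\bbG^\theta$, and writing $\bbD:=\mathbf{\Delta}^{\lambda-X^\theta_{i\mu}}_{(i\mu)}=(\lambda-X^\theta_{i\mu})\bigl(\mathbf{\Delta}\bbe_\mu\bbe^T_{\tilde i}+\bbe_{\tilde i}\bbe^T_\mu\mathbf{\Delta}^T\bigr)$, I obtain
\begin{equation*}
\bbG^{\theta,\lambda}_{(i\mu)}-\bbG^\theta=\sum_{k=1}^{K}(-1)^k\bbG^\theta(\bbD\bbG^\theta)^k+(-1)^{K+1}\bbG^{\theta,\lambda}_{(i\mu)}(\bbD\bbG^\theta)^{K+1}.
\end{equation*}
Because $\bbD$ splits into two rank-$1$ summands with deterministic bounded vector factors $\mathbf{\Delta}\bbe_\mu$ and $\bbe_{\tilde i}$, expanding each $(\bbD\bbG^\theta)^k$ into $2^k$ terms turns the $(s,t)$-entry of $\bbB_1\bbG^\theta(\bbD\bbG^\theta)^k\bbB_2$ into a sum of products consisting of the scalar $(\lambda-X^\theta_{i\mu})^k$ and exactly $k+1$ inner products of the form $\langle\bbu,\bbG^\theta\bbv\rangle$, with $\bbu,\bbv$ drawn from the deterministic bounded set $\{\bbB_1^T\bbe_s,\bbB_2\bbe_t,\mathbf{\Delta}\bbe_\mu,\bbe_{\tilde i}\}$. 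By Lemma \ref{1116-3} together with (\ref{0304.5}), each such inner product is $O_\prec(N^{2\delta})$, so the $k$-th term is $O_\prec\bigl(N^{-k/2+2(k+1)\delta}\bigr)\ll N^{2\delta}$ for every $k\geq 1$, provided $\delta<1/4$.

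For the remainder, the same rank-$1$ expansion of $(\bbD\bbG^\theta)^{K+1}$ produces $K$ internal inner products $\langle\bbu,\bbG^\theta\bbv\rangle=O_\prec(N^{2\delta})$ and one rightmost inner product $\langle\bbv,\bbG^\theta\bbB_2\bbe_t\rangle=O_\prec(N^{2\delta})$, together with a leftmost factor $\langle\bbB_1^T\bbe_s,\bbG^{\theta,\lambda}_{(i\mu)}\bbu\rangle$ for which Lemma \ref{1116-3} is not yet available. For this factor I use the crude operator-norm bound $\|\bbG^{\theta,\lambda}_{(i\mu)}\|\prec\eta^{-1}\leq N$ from (\ref{1119.12}), valid on $S_m$ since $\eta\geq N^{-\delta m}\geq N^{-1}$, to obtain a contribution of size $O_\prec(N)$ there. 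Combined with the scalar prefactor $(\lambda-X^\theta_{i\mu})^{K+1}=O_\prec(N^{-(K+1)/2})$, the remainder is $O_\prec\bigl(N^{1-(K+1)/2+2(K+1)\delta}\bigr)\ll N^{2\delta}$ once $K$ is large enough (for instance $K=4$), and summing the contributions yields the claim.

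The main obstacle is precisely the remainder term, which contains $\bbG^{\theta,\lambda}_{(i\mu)}$ itself and for which no $O_\prec(N^{2\delta})$ entry-wise control is yet available; the workaround is to fall back on the crude spectral-norm bound $\prec\eta^{-1}$ (costing only a polynomial factor in $N$) and let the $(\lambda-X^\theta_{i\mu})^{K+1}$ decay dominate by choosing the truncation order $K$ sufficiently large. It is also essential to exploit the rank-$2$ structure of $\bbD$ in each expansion step so that no factor of $\eta^{-1}$ is paid for the internal occurrences of $\bbG^\theta$; instead every such occurrence is sandwiched between deterministic bounded vectors and handled by Lemma \ref{1116-3}.
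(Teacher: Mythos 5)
Your proof is correct and follows essentially the same route as the source the paper defers to (\cite{WY}, in the style of \cite{KY14}): expand $\bbG^{\theta,\lambda}_{(i\mu)}$ around $\bbG^{\theta}$ via (\ref{1123.2}) with $\lambda_2=X^{\theta}_{i\mu}$, control each internal factor by the induction bound of Lemma \ref{1116-3} together with $\|\Pi\|\prec 1$ and the scalar gain $|\lambda-X^{\theta}_{i\mu}|\prec N^{-1/2}$, and kill the remainder with a crude resolvent-norm bound after choosing the truncation order large enough. The only detail worth stating explicitly is that the crude bound $\|\bbG^{\theta,\lambda}_{(i\mu)}\|\prec\eta^{-1}$ needs Lemma \ref{1121-1} for the perturbed matrix $\bbX^{\theta,\lambda}_{(i\mu)}$, which holds because the perturbation has norm $O_{\prec}(N^{-1/2})$ (alternatively one can use the cutoff bound (\ref{0821.2}) and enlarge $K$).
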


In order to simplify the notations, we define
$$f_{(i\mu)}(\lambda)= |F^{p}_{st}(X_{(i\mu)}^{\theta, \lambda})|=(F_{st}(X_{(i\mu)}^{\theta, \lambda})\overline{F_{st}(X_{(i\mu)}^{\theta, \lambda})})^{\frac{p}{2}},$$
where we omit some parameters.
\begin{lem}\label{1123-2}
Suppose that $\lambda$ is a random variable and it satisfies $|\lambda|\prec N^{-1/2}$. Then for any fixed integer n, we have
\begin{eqnarray}\label{1123.3}
|f^{(k)}_{(i\mu)}(\lambda)|\prec N^{2\delta(p+k)}.
\end{eqnarray}
Moreover, we have
\begin{eqnarray}\label{1123.4}
f_{(i\mu)}(\lambda)=\sum_{k=1}^{4p}\frac{\lambda^k}{k!}f^{(k)}_{(i\mu)}(0)+O_{\prec}(\Psi^p)
\end{eqnarray}
by Taylor's expansion.
\end{lem}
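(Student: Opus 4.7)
The plan is to prove the derivative bound (\ref{1123.3}) first using resolvent calculus, then feed it into Taylor's theorem with Lagrange remainder to obtain (\ref{1123.4}).

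\textbf{Step 1 (Derivative bound).} From (\ref{a12}) the linearisation satisfies $\bbH(\bbX^{\theta,\lambda}_{(i\mu)})=\bbH(\bbX^{\theta,0}_{(i\mu)})+\lambda\dot{\mathbf{\Delta}}_{(i\mu)}$, with $\dot{\mathbf{\Delta}}_{(i\mu)}=\mathbf{\Delta}\bbe_\mu\bbe_{\tilde i}^T+\bbe_{\tilde i}\bbe_\mu^T\mathbf{\Delta}^T$ of rank at most $2$. Differentiating $\bbG^{\theta,\lambda}_{(i\mu)}=(\bbH^{\theta,0}_{(i\mu)}+\lambda\dot{\mathbf{\Delta}}_{(i\mu)})^{-1}$ and iterating, $\partial_\lambda^k\bbG^{\theta,\lambda}_{(i\mu)}$ is a finite signed sum of products of the form $\bbG\dot{\mathbf{\Delta}}\bbG\cdots\dot{\mathbf{\Delta}}\bbG$ with $k+1$ factors of $\bbG$. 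Because $\dot{\mathbf{\Delta}}$ has rank $2$, any such product contracted as the $(s,t)$-entry of $\bbB_1\cdot\bbB_2^*$ decomposes into a finite bilinear combination of factors of the form $(\bbB_1\bbG\bbv)_s$ and $(\bbw^T\bbG\bbB_2^*)_t$, with $\bbv,\bbw$ built from $\mathbf{\Delta}\bbe_\mu$ and $\bbe_{\tilde i}$. Each such vector has norm $\lesssim 1$ (since $U_1^TU_1=I$ and $U_2^TU_2=I$ force $\|U_i^T\bbe_\mu\|\le 1$), so normalising and embedding as the first column of an orthogonal matrix reduces the problem to entries covered by Lemma \ref{1116-1}; together with $\|\Pi\|\prec 1$ from (\ref{0304.5}), each factor is $\prec N^{2\delta}$. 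Derivatives of $\Pi$ (handled by a further resolvent identity for $\Gamma(\bbX,z)$ using $\|\Pi\|\prec 1$) and of $\mathcal{T}_N$ (supported on events of super-polynomially small probability by (\ref{0603.2})--(\ref{0603.3})) contribute at most bounded factors. Combining,
$$
|\partial_\lambda^k F_{st}(\bbX^{\theta,\lambda}_{(i\mu)})|\prec N^{2\delta(k+1)},
$$
and Leibniz's rule applied to $f_{(i\mu)}=(F_{st}\overline{F_{st}})^{p/2}$, summed over partitions $k_1+\cdots+k_p=k$, yields $|f^{(k)}_{(i\mu)}(\lambda)|\prec N^{2\delta(p+k)}$.

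\textbf{Step 2 (Taylor expansion).} Apply Taylor's theorem with Lagrange remainder at $0$ to order $4p$:
$$
f_{(i\mu)}(\lambda)=\sum_{k=0}^{4p}\frac{\lambda^k}{k!}f^{(k)}_{(i\mu)}(0)+\frac{\lambda^{4p+1}}{(4p+1)!}f^{(4p+1)}_{(i\mu)}(\xi)
$$
for some $\xi$ between $0$ and $\lambda$. Using $|\lambda|\prec N^{-1/2}$ and (\ref{1123.3}) with $k=4p+1$, the remainder is $\prec N^{-(4p+1)/2+2\delta(5p+1)}$. Since $\Psi\ge N^{-1/2}$ gives $\Psi^p\ge N^{-p/2}$ and $\delta$ was chosen so small that $N^{24\delta}\Psi\ll 1$ (in particular $2\delta(5p+1)<(3p+1)/2$ for any fixed $p$), the remainder is $O_\prec(\Psi^p)$, completing (\ref{1123.4}).

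\textbf{Main obstacle.} The most delicate point is in Step 1: each factor of $\bbG$ in the iterated derivatives is sandwiched against the rank-$2$ matrix $\dot{\mathbf{\Delta}}$ rather than between clean orthogonal matrices, and one must nevertheless recover the Lemma \ref{1116-1} bound. This hinges on the uniform bound $\|\mathbf{\Delta}\bbe_\mu\|\lesssim 1$ via the isometry properties of $U_1,U_2$, plus a clean embedding of short vectors as columns of orthogonal matrices so the lemma applies entrywise. A secondary subtlety is the chain-rule treatment of the $\lambda$-dependence of $\Pi(z)$ and the cutoff $\mathcal{T}_N(\bbX)$: one must verify that their $\lambda$-derivatives contribute only $O_\prec(1)$ factors (for $\Pi$) or super-polynomially small factors (for $\mathcal{T}_N$), so that they are absorbed into the $N^{2\delta(k+1)}$ bound on $F_{st}$-derivatives.
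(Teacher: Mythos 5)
Your proof is correct and follows essentially the same route as the source: the paper itself defers this lemma to \cite{WY}, where the argument is exactly your rank-two resolvent expansion of $\bbG^{\theta,\lambda}_{(i\mu)}$ combined with the entrywise bound of Lemma \ref{1116-1} and $\|\Pi\|\prec 1$, Leibniz's rule for $(F_{st}\overline{F_{st}})^{p/2}$, and Taylor's theorem with Lagrange remainder estimated via $|\lambda|\prec N^{-1/2}$, $\Psi\gtrsim N^{-1/2}$ and the smallness of $\delta$. The only cosmetic discrepancy is that your expansion (correctly) retains the $k=0$ term, which is harmless since only the difference $f_{(i\mu)}(\lambda)-f_{(i\mu)}(0)$ is used in (\ref{1123.5}).
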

The proof of Lemmas \ref{1116-1} and \ref{1123-2} can be found in \cite{WY}.
By Lemma \ref{1123-2} and Lemma \ref{1123-3}, we have
\begin{eqnarray}\label{1123.5}
&&\mathbb{E}|F^p_{st}(X_{(i\mu)}^{\theta,X_{i\mu}^1})|-\mathbb{E}|F^p_{st}(X_{(i\mu)}^{\theta,0})|=\mathbb{E}f_{(i\mu)}(X_{i\mu}^1)-\mathbb{E}f_{(i\mu)}(0)\non
&&=\frac{1}{2N_1}\mathbb{E}f^{(2)}_{(i\mu)}(0)+\sum_{k=4}^{4p}\frac{1}{k!}\mathbb{E}f^{(k)}_{(i\mu)}(0)\mathbb{E}(X_{i\mu}^1)^k+O_{\prec}(\Psi^p),
\end{eqnarray}
where we use the first three moment matching condition.
To show (\ref{1119.13}), it suffices to prove that
\begin{eqnarray}\label{1123.6}
N^{-k/2}\sum_{i=1}^{M_1}\sum_{\mu=1}^N\mathbb{E}f^{(k)}_{(i\mu)}(0)=O_{\prec}((N^{24\delta}\Psi)^p+\|\mathbb{E}\bbL_p(X^{\theta})\|_{\infty}),
\end{eqnarray}
for k=4,...,4p.
 We now  point out that $\mathbb{E}|B_{st}(\bbX_{(i\mu)}^{\theta,0})|^{p}$ in (\ref{1119.13}) equals
$$\mathbb{E}|F_{st}(\bbX_{(i\mu)}^{\theta,0})|^{p}+\frac{1}{2N_1}\mathbb{E}f^{(2)}_{(i\mu)}(0).$$
But we do not prove (\ref{1123.6}) directly. We instead prove (\ref{1123.7}) in order to obtain a self-consistent estimation of $X^{\theta}$ instead.
We claim that if
\begin{eqnarray}\label{1123.7}
N^{-k/2}\sum_{i=1}^{M_1}\sum_{\mu=1}^N\mathbb{E}f^{(k)}_{(i\mu)}(X^{\theta}_{i\mu})=O_{\prec}((N^{24\delta}\Psi)^p+\|\mathbb{E}\bbL_p(X^{\theta})\|_{\infty}),
\end{eqnarray}
holds for k=4,...,8p, then (\ref{1123.6}) holds for n=4,...,4p.
The proof of this claim is the same as (7.60)-(7.61) of \cite{WY}.

It then suffices to prove (\ref{1123.7}). Recall that \begin{equation}\label{a8}
f^{(k)}_{(i\mu)}(X^{\theta}_{i\mu})=\frac{\partial^k\Big(|F_{st}(\bbX_{(i\mu)}^{\theta})|^{p}\Big)}{\partial (X_{i\mu}^{\theta})^k},\end{equation}
where $F_{st}(\cdot)$ is given in (\ref{a5}). Since $\bbX^{\theta}=\bbX_{(i\mu)}^{{\theta}, X^{\theta}_{i\mu}}$ is the only matrix of interest. We below use $\bbX=(X_{i\mu})$ instead of $\bbX^{\theta}=(X_{i\mu}^{\theta})$ to simplify notation because the entries of both of them have bounded higher moments. To prove (\ref{1123.7}) we need to study (\ref{a8}).
\subsubsection{Estimation of higher order derivatives (\ref{a8}) in (\ref{1123.7})}

Before starting Section \ref{h0425-1}, we quote some notations and necessary results from \cite{WY} about estimation of higher order derivatives (\ref{a8}) in (\ref{1123.7}).
 By dropping $\bbe_i\bbe^T_\mu$ and $\bbe_\mu\bbe^T_i$ we define the set
\begin{equation}\label{1202.1}
\mathcal{Q}(k)= \{\text{The matrices constructed from sum or product of (part of)} \ \   \bbU_2, \bbX,  \  \Gamma(\bbX,z)\},
\end{equation}
where any $k$th order derivative of each block of $\Pi(z)$ with respect to $\bbX_{i\mu}$ belongs to some product(s) between some matrices in $\mathcal{Q}(k)$ and $\bbe_i\bbe^T_\mu$ or $\bbe_\mu\bbe^T_i$.
To characterize the higher order derivative conveniently we define group $g$ of size $k$ to be the set of paired indices:
$$
 g=\{s_1t_1,s_2t_2,\cdots, s_{k+1}t_{k+1}\},
 $$
 where each of $\{s_j,t_j,j=1,\cdots,k+1\}$ equals one of four letters $s,t,\tilde i, \mu$(recalling $\tilde i=i+N_1$).
Here we would like to remind the readers that the size of g is k instead of k+1 in order to simplify the arguments in the following proof. Denote the size of the group $g$ by $k=k(g)$ and introduce the set $\mathfrak{G}_k=\{g:\ k(g)=k\}$ consisting of groups of size $k$.
Moreover, we require each group in $\mathfrak{G}_k$ to satisfy three conditions specified below: 
\begin{itemize}
\item[(i)] $s_1=s$ and $t_{k+1}=t$.
\item[(ii)] For $l\in [2,k+1]$ we have $s_l\in \{\tilde i, \mu\}$ and $t_{l-1}\in \{\tilde i,\mu \}$.
\item[(iii)] For $l\in [1,n]$ we have $t_{l-1}s_l\in \{\tilde i\mu,\mu \tilde i\}$.
\end{itemize}
As will be seen, groups $g$ are connected with the high order derivatives of $(\bbB_1\bbG(z)\bbB_2^T)_{st}$.
%
%
%
%
%
Moreover write $\bbF(z)=\sum\limits_{j=1}^3\Pi_j(z)$ where each $\Pi_j(z)$ corresponds to a non-zero block of $\Pi(z)$.

Also, to characterize the higher order derivative of each block conveniently we define groups $g^{(j)}$ of size $k$ to be the set of paired indices:
$$
g^{(j)}=\{s_{j1}t_{j1},s_{j2}t_{j2},\cdots, s_{j(k+1)}t_{j(k+1)}\},
$$
where each of $s_{jm}$ and $t_{jm}$ equals one of $s,t,i,\mu$.
 Moreover introduce the set $\mathfrak{G}_{jk}=\{g^{(j)}:\ k(g^{(j)})=k\}$ consisting of groups of size $k$. 
We require each group in $\mathfrak{G}_{jk}$ to satisfy conditions:
\begin{itemize}
\item[(i)] $s_{j1}=s$ and $b_{j (k+1)}=t$.
\item[(ii)] For $l\in [2,k+1]$ we have $s_{jl}\in \{i,\mu\}$ and $t_{j(l-1)}\in \{i,\mu\}$.
\item[(iii)] For $l\in [1,k]$ we have $t_{j(l-1)}s_{jl}\in \{i \mu,\mu i\}$.
\end{itemize}
As will be seen groups $g^{(j)}$ are linked to the high order derivatives of $(\bbB_1\Pi(z)\bbB_2^T)_{st}$.

We below associate a random variable $B_{s,t,i,\mu}(g,g^{(1)},\cdots,g^{(3)})$ with each group $g,g^{(j)},j=1,\cdots,3$. When $k(g)=k(g^{(j)})=0$ we define
\begin{equation}\label{b4}
B_{s,t,i,\mu}(g,g^{(1)},\cdots,g^{(3)}))= (\bbB_1\bbG(z)\bbB_2^T)_{st}-(\bbB_1\Pi(z)\bbB_2^T)_{st}.
\end{equation}
 When $k(g)\geq1$ or $k(g^{(j)})\ge1$, define
 \begin{equation}\label{a16}B_{s,t,i,\mu,\bbR_{2,\cdots,n},\mathcal{R}_{11,\cdots,3k+1}}(g,g^{(1)},...,g^{(3)})=
C_{s,t,i,\mu,\bbR_{2,\cdots,k},\mathcal{R}_{11,\cdots,3k+1}}(g,g^{(1)},\cdots,g^{(3)}))
\end{equation}$$-\sum_{j=1}^3(\bbB_1\mathcal{R}_{j1})_{(s_{j1}t_{j1})}(\mathcal{R}_{j2})_{(s_{j2}t_{j2})}...(\mathcal{R}_{jk})_{(s_{jn}t_{jk})}
(\mathcal{R}_{jk+1}\bbB_2^T)_{(s_{jk+1}t_{jk+1})},
$$
with
\begin{equation}\label{0526.2}C_{s,t,i,\mu,\bbR_{2,\cdots,k},\mathcal{R}_{11,\cdots,7k+1}}(g,g^{(1)},\cdots,g^{(3)}))=(\bbB_1G \bbA_5)_{(s_1t_1)}(\bbR_2)_{(s_2t_2)}...(\bbR_k)_{(s_kt_k)}(\bbA_4\bbG\bbB_2^T)_{(s_{k+1}t_{k+1})},\end{equation}
where $\bbR_j (2\le j\le k)$ has the expression of $\bbR_j=\bbA_4\bbG \bbA_5$
with $\bbA_4\in \{1, \bold\Delta \}$, $\bbA_5 \in \{1,\bold\Delta^T\}$ and the non-zero block $\mathcal{R}_{jl}$ belongs to $\mathcal{Q}(k)$ in (\ref{1202.1}).
Moreover the selection of $1$ and $\bold\Delta$ in $\bbA_4$ and $\bbA_5$ is subject to the constraint that the total number of $\bold\Delta$ and $\bold\Delta^T$ contained in  $B_{s,t,i,\mu,\bbR_{2,\cdots,k},\mathcal{R}_{11,\cdots,7k+1}}(g,g^{(1)},...,g^{(3)})$ is $k$. One should also notice that if $k(g)=1$, the terms $R_j$ will disappear.


\begin{eqnarray}\label{a38}
&&\frac{\partial^k }{\partial (X_{i\mu})^k}\Big([(\bbB_1\bbG(z)\bbB_2^T)_{st}-(\bbB_1\Pi(z)\bbB_2^T)_{st}]\mathcal{T}_N(\bbX)\Big)\\
&&=(-1)^k\sum_{g\in \mathfrak{G}_{k},g^{(j)}\in \mathfrak{G}_{jk} \atop{\bbR_i, i=2,...,k \atop \mathcal{R}_{jl}, j=1,..3, l=1,...,k+1} }B_{s,t,i,\mu,\bbR_{2,\cdots,k},\mathcal{R}_{11,\cdots,7k+1}}(g,g^{(1)},...,g^{(3)})\mathcal{T}_N(\bbX)+O_{\prec}(0).\nonumber
\end{eqnarray}
To simplify the notations, we furthermore omit $\bbR_{2\cdots,k},\mathcal{R}_{11,...,3k+1}, g^{(1)},...,g^{(3)}$ in the sequel and write
\begin{equation}\label{a36}B_{s,t,i,\mu}(g)=B_{s,t,i,\mu,\bbR_{2,\cdots,k},\mathcal{R}_{11,\cdots,3k+1}}(g,g^{(1)},...,g^{(3)}),\end{equation}
\begin{equation}\label{a37}C_{s,t,i,\mu}(g)=C_{s,t,i,\mu,\bbR_{2,\cdots,k},\mathcal{R}_{11,\cdots,3k+1}}(g,g^{(1)},...,g^{(3)}),\end{equation}
(here one should notice that the sizes of $g$ and $g^{(j)}$ are the same according to definition (\ref{a16})). Hence we have
\begin{eqnarray}\label{a14}
&\frac{\partial^k }{\partial (X_{i\mu})^k}\Big(|F_{st}(\bbX)|^{p}\Big)=(-1)^k\sum\limits_{k_1,...,k_{p/2}, \tilde k_1,...,\tilde k_{p/2}\in \mathbb{N} \atop
\sum_{r}(k_r+\tilde k_r)=k}\frac{k!}{\prod_r k_r!\tilde k_r!}
\\ &\times\prod\limits_{r=1}^{\frac{p}{2}}(\sum\limits_{g_r\in\mathfrak{G}_{k_r}\cup\mathfrak{G}_{jk_r}\atop{\bbR_i, i=2,...,k \atop \mathcal{R}_{jl}, j=1,..3, l=1,...,k+1} }\sum\limits_{\tilde g_r\in \mathfrak{G}_{\tilde k_r}\cup\mathfrak{G}_{ j\tilde k_r}\atop{\bar{\bbR}_i, i=2,...,k \atop \mathcal{\bar {R}}_{jl}, j=1,..3, l=1,...,k+1}}B_{s,t,i,\mu}(g_r)\overline{B_{s,t,i,\mu}(\tilde g_r)}\mathcal{T}^2_N(\bbX))
+O_{\prec}(0),\nonumber
\end{eqnarray}
where $g_r\in \mathfrak{G}_{ k_r}\cup\mathfrak{G}_{jk_r}$ means that the groups associated with the derivatives of $\bbG(z)$ belong to $\mathfrak{G}_{k_r}$ and the groups associated with the derivatives of $\Pi(z)$ belong to $\mathfrak{G}_{ jk_r}$. In view of (\ref{a14}) and (\ref{a8}) to prove (\ref{1123.7}) it then suffices to show that
\begin{equation}\label{1117.2}
N^{-k/2}\sum_{i=1}^{M_1}\sum_{\mu=1}^N\mathbb{E}\left[\prod_{r=1}^{p/2}B_{s,t,i,\mu}(g_r)\overline{B_{s,t,i,\mu}(\tilde g_r)}\mathcal{T}^{p}_N(\bbX)\right]=O((N^{24\delta}\Psi)^{p}+\|\mathbb{E}\bbL_p(\bbX)\|_{\infty}),
\end{equation}
for $4\le k \le 8p$ and groups $g_r\in\mathfrak{G}_{k_r}\cup\mathfrak{G}_{jk_r}$ satisfying $\sum_{r}k(g_r)=k$ and $k(g_0)=0$.
Define
$$\mathcal{H}_i = \mathcal{H}_{1i}+\mathcal{H}_{sti},\ \   \mathcal{H}_{1i}= |(\bbB_1\bbG)_{s\tilde i}|+|(\bbG\bbB_2^T)_{\tilde it}|, \ \ \mathcal{H}_{sti}= \sum_{\mathcal{R}\in \mathcal{Q}(n)}(|(\bbB_1\mathcal{R})_{si}|+|(\mathcal{R}\bbB_2^T)_{it}|),$$
 $$\mathcal{H}_{\mu} = \mathcal{H}_{1\mu}+\mathcal{H}_{st\mu},\ \  \mathcal{H}_{1\mu}= |(\bbB_1\bbG\bold\Delta)_{s \mu}|+|(\bold\Delta^T\bbG\bbB_2^T)_{\mu t}|, \ \ \mathcal{H}_{st\mu}=\sum_{\mathcal{R}\in \mathcal{Q}(k)}(|(\bbB_1\mathcal{R})_{a\mu}|+|(\mathcal{R}\bbB_2^T)_{\mu t}|.$$
 By the same arguments from (7.77)-(7.85) in \cite{WY}, we have
\begin{eqnarray}\label{1117.4}
|B_{s,t,i,\mu}(g_r)|\prec N^{2\delta (k(g)+1)},
\end{eqnarray}
(recall $k(g)=k(g^{(j)})$ from definition (\ref{a16})).
Likewise, for $k(g)\ge 1$, we have
\begin{eqnarray}\label{1117.5}
|B_{s,t,i,\mu}(g_r)|\prec (\mathcal{H}_i^2+\mathcal{H}_{\mu}^2)N^{2\delta(k(g_r)-1)},
\end{eqnarray}
while k(g)=1,
\begin{eqnarray}\label{1117.6}
|B_{s,t,i,\mu}(g_r)|\prec \mathcal{H}_i\mathcal{H}_{\mu}.
\end{eqnarray}
\begin{equation}\label{g28}\sum_{i=1}^{M_1}\mathcal{H}_{1i}^2+\sum_{\mu=1}^N \mathcal{H}_{1\mu}^2\prec N\phi_s^2+N\phi_t^2,\end{equation}
\begin{equation}\label{h28}\sum_{i\ \text{or}\ s \ \text{or} \ t}^{M_1}\mathcal{H}_{sti}^2+\sum_{s\ \text{or}\ \mu}^N \mathcal{H}_{s\mu}^2\prec 1,\end{equation}
where $i\ \text{or}\ s \ \text{or} \ t$ means the summation over either $i$ or $s$ or $t$ and
$$\phi_s^2= \frac{\Im(\bbB\bbG\bbB^*)_{ss}+\eta}{N\eta},$$
with $\bbB\in \mathcal{L}$ defined in (\ref{a5}).

\subsection{(\ref{0310.1}) under the condition (\ref{20150111})}\label{h0425-1}

This subsection is to remove the 3rd moment matching condition needed in the previous subsection. The proving strategy is similar to that in \cite{KY14}. Note that we have used the first three moments matching condition
when obtaining (\ref{1123.5}). We now have to estimate the term involving the third derivative in (\ref{0310.1}) (there $n$ now starts from three). To this end, it
is enough to prove (\ref{1123.7}) for $k=3$. This further reduces to proving that (\ref{1117.2}) holds for k=3 as well.

 Define $\bbv,\bbw\in \{\bbB_{(1)},...,\bbB_{(N-N_2+N_1+M_1)}\}$,
 where $\bbB_{(i)}$ represents the $i$-th column of $\bbB$ with $\bbB\in \mathcal{L}$, recalling $\mathcal{L}$ defined immediately below (\ref{a5}).
In the sequel, we focus on $\langle\bbv,(\bbG(z)-\Pi(z))\bbv\rangle$ only because the general inner product $\langle\bbv,(\bbG(z)-\Pi(z))\bbw\rangle$
 can be handled by the equality that
$$\langle\bbv,(\bbG(z)-\Pi(z))\bbw\rangle=\frac{1}{2}\big(\langle\bbv+\bbw,(\bbG(z)-\Pi(z))(\bbv+\bbw)\rangle-\langle\bbw,(\bbG(z)-\Pi(z))\bbw\rangle-\langle\bbv,(\bbG(z)-\Pi(z))\bbv\rangle\big).$$
 Here we absorb $\bbe_s^T\bbB_1$ and $\bbB_2\bbe_t$ used in (\ref{a5}) into new vectors $\bbv$ and $\bbw$.
 As a consequence denote $B_{s,t,i,\mu}(g)$ used in (\ref{1117.2}) by $A_{\bbv,i,\mu}(g)$ and ignore the conjugate symbol there for simplicity.
Below we take derivatives of $\bbG$ and $\bold\Pi(z)$ with respect to $X_{i\mu}$.  First, we calculate the derivative of $\bbG(z)$ with respect to $X_{i\mu}$.

To this end we expand $\bbG$ in terms of the $\tilde {i}$-th row of $\bbX$. 
Let $\bbH^{(\tilde i)}$ be the submatrix obtained from $\bbH$ by deleting its $\tilde{i}$-th row and $\tilde{i}$-th column (deleting the $i$-th row of $\bbX$ involved in $\bbH$) and define $\bbG^{(\tilde i)}=(\bbH^{\tilde i})^{-1}$.
Recalling the notations (\ref{0822.2}) and referring to the resolvent expansion formula (8.3) of \cite{KY14} we have the following expansion
\begin{eqnarray}\label{expand}\nonumber
\bbG_{\bbu\bbw}&=&\bbu(\tilde i)\bbw(\tilde i)\bbG_{\tilde i\tilde i}+\bbG^{(\tilde i)}_{\bbu\bbw}+\bbG_{\tilde i\tilde i}(\bbG^{(\tilde i)}\bold\Delta\bbX^T)_{\bbu i}(\bbX\bold\Delta^T\bbG^{(\tilde i)})_{i\bbw }
-\bbu(\tilde i)\bbG_{\tilde i\tilde i}(\bbX\bold\Delta^T\bbG^{(\tilde i)})_{i\bbw }\\ &&-\bbw(\tilde i)\bbG_{\tilde i\tilde i}(\bbG^{(\tilde i)}\bold\Delta\bbX^T)_{\bbu i}.
\end{eqnarray}
In fact, if we exchange the first with second ``row'' of $\bbH$ and its first ``column'' with its second ``column'' (i.e. we convert $\bbH$ to
$$\left(
  \begin{array}{ccc}
    0 & \bbX \bbU_1 &\bbX \bbU_2 \\
    \bbU^T_1 \bbX^T  &-zI & 0 \\
    \bbU^T_2\bbX^T &0 & I\\
  \end{array}
\right)$$), then (\ref{expand}) is similar to the formula (8.3) of \cite{KY14}. Here one may change the subscripts $s$ and $t$ in (\ref{a5}) when necessary,
which can still be absorbed into the vector $\bbv$.

The next aim is to correspondingly extract the entries $X_{i\mu}$ of $\bbX$ from $\bold\Pi(z)$ such that taking expectation on $X_{i\mu}$ later may use the independence
between ${\bold\Pi}^{(i)}$ (defined below) and $X_{i\mu}$, which serves the same function as (\ref{expand}). 
Since it is complicated to extract $X_{i\mu}$ from $\bold\Pi(z)$ we construct a proxy matrix ${\bold\Pi_1}$ below. Define  \begin{eqnarray}\label{0528.5}
{\bold\Pi_1}=\bbH_1^{-1}=\left(\begin{array}{cccc}
    m(z)\bbI &    \bbX \bbU_2    \\
    \bbU_2^T\bbX^T &   -\bbI
\end{array}\right)^{-1}.\end{eqnarray}
 By (\ref{0202.1}) we have
 \begin{eqnarray}\label{0112.1}
 {\bold\Pi_1}=\left(\begin{array}{cccc}
   \Gamma(\bbX,z) &    \Gamma(\bbX,z)\bbX \bbU_2    \\
    \bbU_2^T\bbX^T\Gamma(\bbX,z) &   -\bbI+\bbU_2^T\bbX^T\Gamma(\bbX,z)\bbX\bbU_2
\end{array}\right).
\end{eqnarray}
The key observations are that the first diagonal matrix of ${\bold\Pi_1}$ is the same as the second block of $\bold\Pi$ and that the second diagonal block of ${\bold\Pi_1}$ and the third block of $\bold\Pi$ differ by $2\bbI$. Since ${\bold\Pi}(z)$ is a $3\times 3$ block diagonal matrix, we can split $\bbv$ into 3 parts $\bbv_1$, $\bbv_2$ and $\bbv_3$: $\bbv=(\bbv_1^T,\bbv_2^T,\bbv_3^T)^T$ corresponding to each block. In other words, $$\Pi_{\bbv\bbv}=m(z)\bbv_1^T\bbv_1+\Gamma(\bbX,z)_{\bbv_2\bbv_2}+(\bbI+\bbU_2^T\bbX^T\Gamma(\bbX,z)\bbX\bbU_2)_{\bbv_3\bbv_3}
=\Pi_{\hat\bbv_1\hat\bbv_1}+\Pi_{\hat\bbv_2\hat\bbv_2}+\Pi_{\hat\bbv_3\hat\bbv_3},$$ where $\hat\bbv_1=(\bbv_1^T,0,0)^T$, $\hat\bbv_2=(0,\bbv_2^T,0)^T$, $\hat\bbv_3=(0,0,\bbv_3^T)^T$ and their sizes are the same as $\bbv$'s size. Moreover, we set $\tilde \bbv_2=(\bbv_2^T,0)^T$ and $\tilde \bbv_3=(0,\bbv_3^T)^T$, where their sizes are both equal to the size of $(\bbv_2^T,\bbv_3^T)^T$. It follows that
\begin{equation}\label{b2}
\Pi_{\hat\bbv_2\hat\bbv_2}=(\Pi_1)_{\tilde\bbv_2\tilde\bbv_2}.
\end{equation}
Let $\Pi_1^{(i)}=(\bbH_1^{(i)})^{-1}$, where $\bbH_1^{(i)}$ is the sub matrix of $\bbH_1$ by deleting its i-th row and i-th column. One can similarly define
$\bold\Pi^{(i)}$ from $\bold\Pi$.
Applying (\ref{expand}) (or formula (8.3) of \cite{KY14}) to $\Pi_1$ and by (\ref{0112.1}) we have
$$(\Pi_1)_{\tilde\bbv_3\tilde\bbv_3}=(\Pi^{(i)}_1)_{\tilde\bbv_3\tilde\bbv_3}+(\Pi_1)_{ii}(\Pi^{(i)}_1\bbU_2^T\bbX^T)_{\tilde\bbv_3i}(\bbX \bbU_2 \Pi_1^{(i)})_{i \tilde\bbv_3},\quad i\in(1,M_1) $$
(in this case note that $\tilde\bbv_3(i)=0$ for $i\in\{1,2,...,M_1\}$).
Together with $\Pi_{\hat\bbv_3\hat\bbv_3}=2\bbv_3^T\bbv_3+(\Pi_1)_{\tilde\bbv_3\tilde\bbv_3}$ and $\Pi^{(i)}_{\hat\bbv_3\hat\bbv_3}=2\bbv_3^T\bbv_3+(\Pi^{(i)}_1)_{\tilde\bbv_3\tilde\bbv_3}$, we conclude that
\begin{equation}\label{b1}
\Pi_{\hat\bbv_3\hat\bbv_3}=\Pi^{(i)}_{\hat\bbv_3\hat\bbv_3}+(\Pi_1)_{ii}(\Pi^{(i)}_1\bbU_2^T\bbX^T)_{\tilde\bbv_3i}(\bbX \bbU_2 \Pi_1^{(i)})_{i \tilde\bbv_3}.
\end{equation}
Similarly via (\ref{expand}) and (\ref{b2}) we have
\begin{eqnarray}\label{b3}
\Pi_{\hat\bbv_2\hat\bbv_2}&=&\bbv(\tilde i)^2\Pi_{\tilde i\tilde i}+\Pi^{(i)}_{\hat\bbv_2\hat\bbv_2}+(\Pi_1)_{ii}(\Pi^{(i)}_1\bbU_2^T\bbX^T)_{\tilde\bbv_2i}(\bbX \bbU_2 \Pi_1^{(i)})_{i \tilde\bbv_2}\non
&&-\bbv(\tilde i)(\Pi_1)_{ii}(\Pi^{( i)}_1\bbU_2^T\bbX^T)_{\tilde\bbv_2i}-\bbv(\tilde i)(\Pi_1)_{ii}(\bbX \bbU_2 \Pi_1^{(i)})_{i \tilde\bbv_2}.
\end{eqnarray}
One should notice that we will subtract $\bbv(\tilde i)^2\Pi_{\tilde i\tilde i}$ at  (\ref{0528.2}).
In the sequel we use $\bbv_2$ (also $\bbv_3$) to represent $\bbv_2$, $\hat\bbv_2$ or $\tilde\bbv_2$ depending on the dimension of the matrix we deal with if there is no confusion.

Recalling $\tilde i=i+N_1$, since the expectation of $(\bbG_{\tilde i\tilde i}-\Pi_{\tilde i\tilde i})$ is difficult to handle in the following proof, we replace $\bbA_{\bbv,i,\mu}(g)$ by $\hat \bbA_{v,i,\mu}(g)$:
\begin{eqnarray}\label{0528.2}
\hat \bbA_{\bbv,i,\mu}(g)=
 \begin{cases}
    \bbG_{\bbv\bbv}-\Pi_{\bbv\bbv}-\bbv(\tilde i)^2(\bbG_{\tilde i\tilde i}-\Pi_{\tilde i\tilde i})  &  \text{if} \  k(g)=0\\
    \bbA_{\bbv,i,\mu}(g)
        &  \text{if} \ k(g)\ge 1\\
 \end{cases}
\end{eqnarray}
By the following Lemma, it suffices to show (\ref{1117.2}) holds for $\hat \bbA_{\bbv,i,\mu}(g)$ when k=3:
 \begin{lem}\label{0823-1}
 If (\ref{1117.2}) holds for $\hat \bbA_{\bbv,i,\mu}(g)$ when k=3, then it also holds for $\bbA_{\bbv,i,\mu}(g)$ when k=3.
 \end{lem}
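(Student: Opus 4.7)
The plan is to expand the product $\prod_{r=1}^{p/2}\bbA_r\bar\bbA_{\tilde r}$ around $\prod_{r=1}^{p/2}\hat\bbA_r\bar{\hat\bbA}_{\tilde r}$ and verify that the remainder terms are negligible after applying $N^{-3/2}\sum_{i,\mu}\mathbb{E}[\,\cdot\,\mathcal{T}_N^p(\bbX)]$. Since $\bbA_r$ and $\hat\bbA_r$ differ only when $k(g_r)=0$, in which case $\bbA_r-\hat\bbA_r=\bbv(\tilde i)^2(\bbG_{\tilde i\tilde i}-\Pi_{\tilde i\tilde i})=:\Delta_i$, I would invoke the algebraic identity
\[
\prod_r\bbA_r\bar\bbA_{\tilde r}-\prod_r\hat\bbA_r\bar{\hat\bbA}_{\tilde r}=\sum_{\substack{T\subseteq I_0,\ \tilde T\subseteq \tilde I_0 \\ (T,\tilde T)\ne(\emptyset,\emptyset)}}\Delta_i^{|T|}\bar\Delta_i^{|\tilde T|}\prod_{r\notin T}\hat\bbA_r\prod_{\tilde r\notin\tilde T}\bar{\hat\bbA}_{\tilde r},
\]
with $I_0=\{r:k(g_r)=0\}$ and $\tilde I_0$ defined analogously. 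This reduces the proof of (\ref{1117.2}) for $\bbA$ to the same bound for $\hat\bbA$ (which is the hypothesis) plus a negligible contribution from each correction term indexed by $(T,\tilde T)\ne(\emptyset,\emptyset)$.

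To handle each correction I would rely on three ingredients. First, $\bbG_{\tilde i\tilde i}-\Pi_{\tilde i\tilde i}$ is precisely the entry $F_{\tilde i\tilde i}(\bbX,z)$ of the matrix $F$ defined at (\ref{a5}) for the choice $\bbB_1=\bbB_2=\bbI$, and hence its $p$-th moment is bounded by $\|\mathbb{E}\bbL_p(\bbX)\|_\infty$ by definition. Second, since $|\bbv(\tilde i)|\le 1$ we have $|\bbv(\tilde i)|^{2(|T|+|\tilde T|)}\le\bbv(\tilde i)^2$ whenever $T\cup\tilde T$ is nonempty, while $\sum_i\bbv(\tilde i)^2\le 1$ because $\bbv$ is a unit vector, so the weight $\bbv(\tilde i)^{2(|T|+|\tilde T|)}$ automatically tames the $i$-sum. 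Third, for the remaining $\hat\bbA_r$ factors I would use Lemma \ref{1116-3} (giving $\prec N^{2\delta}$) when $k(g_r)=0$, and the pointwise bounds (\ref{1117.4})--(\ref{1117.6}) together with the summation identities (\ref{g28})--(\ref{h28}) when $k(g_r)\ge 1$. A H\"older's inequality that routes the $\Delta_i$-factors into an $L^p$-slot paired with $\|\mathbb{E}\bbL_p(\bbX)\|_\infty$, and the remaining factors into a complementary $L^{p/(p-1)}$-slot, should then yield a bound of the form $(N^{24\delta}\Psi)^p+\|\mathbb{E}\bbL_p(\bbX)\|_\infty$ for every correction.

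The hard part will be the combinatorial bookkeeping arising from the mixed $(i,\mu)$-summation: $\Delta_i$ is $\mu$-independent while the $k(g_r)\ge 1$ factors depend genuinely on $\mu$, so the trivial sum $\sum_\mu 1=N$ threatens to overwhelm the $N^{-3/2}$ prefactor. Controlling this requires using (\ref{g28})--(\ref{h28}) to absorb the joint $(i,\mu)$-sums of $\mathcal{H}_i^2+\mathcal{H}_\mu^2$, with the weight $\bbv(\tilde i)^2$ supplying the crucial extra decay that renders the rough estimate $|\bbG_{\tilde i\tilde i}-\Pi_{\tilde i\tilde i}|\prec N^{2\delta}$ from Lemma \ref{1116-3} sufficient at this stage of the induction, with the sharper information about the diagonal absorbed into $\|\mathbb{E}\bbL_p(\bbX)\|_\infty$ via the self-consistency of the bound.
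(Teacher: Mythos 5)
The paper offers no written argument for this lemma (its proof is a one-line deferral to \cite{KY14}), so the benchmark is the KY14-style reduction it invokes. Your skeleton matches that reduction: the binomial expansion of $\prod_r\bbA_{\bbv,i,\mu}(g_r)$ around $\prod_r\hat\bbA_{\bbv,i,\mu}(g_r)$, the fact that every correction carries at least one factor $\bbv(\tilde i)^2(\bbG_{\tilde i\tilde i}-\Pi_{\tilde i\tilde i})$ with $\sum_i\bbv(\tilde i)^2\le 1$, and the identification of $\bbG_{\tilde i\tilde i}-\Pi_{\tilde i\tilde i}$ (on the event $\mathcal{T}_N(\bbX)=1$) as an admissible entry $F_{\tilde i\tilde i}$ with $\bbB_1=\bbB_2=1\in\mathcal{L}$, so that $\mathbb{E}|F_{\tilde i\tilde i}|^p\le\|\mathbb{E}\bbL_p(\bbX)\|_{\infty}$, are exactly the right ingredients.

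The gap is in the closing H\"older/Young step. You bound the remaining $k(g_r)=0$ factors by the rough estimate $\prec N^{2\delta}$ of Lemma \ref{1116-3} and route only the $\Delta_i$-factor(s) into an $L^p$ slot paired with $\|\mathbb{E}\bbL_p(\bbX)\|_\infty$. After the $(i,\mu)$-sums (the weight $\bbv(\tilde i)^2$ for $i$, $N^{-3/2}\sum_\mu$ together with (\ref{g28})--(\ref{h28}) for the at most three derivative factors) this produces terms of the schematic form $N^{C\delta p}\Psi^{m}\|\mathbb{E}\bbL_p(\bbX)\|_\infty^{1/p}$ with $m$ bounded independently of $p$ (at best $m\approx 3$). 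Such a term is \emph{not} $O((N^{24\delta}\Psi)^p+\|\mathbb{E}\bbL_p(\bbX)\|_\infty)$ in the intermediate regime the self-consistent scheme must cover: if, say, $\mathbb{E}|F|^p\asymp\Psi^{p/2}$, then $\Psi^{m}\|\mathbb{E}\bbL_p\|_\infty^{1/p}\asymp\Psi^{m+1/2}$, which dominates both $\Psi^p$ and $\Psi^{p/2}$ as soon as $p>2m+1$; fed into the Gronwall step behind Lemma \ref{0821-1} it would only yield $|F|\prec\Psi^{m/(p-1)}$ rather than $|F|\prec\Psi$, so the lemma's conclusion would not follow. The repair is to keep \emph{every} zero-derivative factor as an $F$-type quantity rather than using the crude $N^{2\delta}$ bound: on the cutoff event $|\bbv(\tilde i)^2(\bbG_{\tilde i\tilde i}-\Pi_{\tilde i\tilde i})|\le|F_{\tilde i\tilde i}|$ and $|\hat\bbA_r|\le|F_{\bbv\bbv}|+|F_{\tilde i\tilde i}|$, so H\"older gives a factor $\|\mathbb{E}\bbL_p(\bbX)\|_\infty^{(p-q)/p}$ with $p-q\ge p-3$, and the remaining powers of $\Psi$ are extracted only for the $q\le3$ derivative factors from the $(i,\mu)$-summations, the role of $\bbv(\tilde i)^2$ being precisely to pay for the $i$-sum that is no longer available to the $\mathcal{H}_i$'s; a bound of the form $\Psi^{a}\|\mathbb{E}\bbL_p\|_\infty^{b/p}$ with $a+b\ge p$ is then indeed absorbed by Young's inequality. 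As written, your single-$F$-factor extraction with $N^{2\delta}$ bounds on the other $p-O(1)$ factors does not close the estimate.
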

 \begin{proof} The proof of this lemma is similar to that in \cite{KY14}. 
 \end{proof}
Referring to (\ref{expand}), one can find out that the expansion of $\bbG_{\bbu\bbw}$ can be reorganized as follows
 \begin{eqnarray}\label{0917.1}
\bbG_{\bbu\bbw}=\bbG_{\bbu\bbw}^{(0)}+\bbG_{\tilde i\tilde i}\bbG_{\bbu\bbw}^{(1)}+\bbv(\tilde i)G_{\tilde i\tilde i}\bbG_{\bbu\bbw}^{(2)},
 \end{eqnarray}
 where $\bbu,\bbw\in \{\bbv,\bbe_{\tilde i},\Delta\bbe_{\mu}\}$, $\bbu$ and $\bbw$ can not be equal to $\bbv$ at the same time. Moreover, $\bbG_{\bbu\bbw}^{(0)}$ is $\bbX^{(i)}$ measurable (obtained from $\bbX$ by deleting the $i$ th row) and independent of the $i$-th row of $\bbX$, and $\bbG_{\bbu\bbw}^{(1)}$ and $\bbG_{\bbu\bbw}^{(2)}$ do not include $\bbG_{\tilde i\tilde i}$ and $\bbv(\tilde i)$.
 We illustrate some examples as follows:
\begin{eqnarray}\label{0528.3}\bbG_{\bbv \tilde i}&=&\bbv(\tilde i)\bbG_{\tilde i\tilde i}-\bbG_{\tilde i\tilde i}(\bbG^{(\tilde i)}\Delta\bbX^T)_{\bbv i}=\bbG_{\tilde i\tilde i}\bbG_{\bbv\tilde i}^{(1)}+\bbv(\tilde i)\bbG_{\tilde i\tilde i}\bbG_{\bbv\tilde i}^{(2)}, \\
\label{0528.4}
  \bbG_{\bbv \bbv_{\mu}}&=&\bbG^{(\tilde i)}_{\bbv\bbv_{\mu}}+\bbG_{\tilde i\tilde i}(\bbG^{(\tilde i)}\bold\Delta\bbX^T)_{\bbv i}(\bbX\bold\Delta^T\bbG^{(\tilde i)})_{i\bbv_{\mu}}-\bbv(\tilde i)\bbG_{\tilde i\tilde i}(\bbX\bold\Delta^T\bbG^{(\tilde i)})_{i\bbv_{\mu}}\non
  &=&\bbG_{\bbv \bbv_{\mu}}^{(0)}+\bbG_{\tilde i\tilde i}\bbG_{\bbv \bbv_{\mu}}^{(1)}+\bbv(\tilde i)\bbG_{\tilde i\tilde i}\bbG_{\bbv \bbv_{\mu}}^{(2)},
  \\
  \label{h0610.1}
  \bbG_{\tilde i \bbv_{\mu}}&=&-\bbG_{\tilde i\tilde i}(\bbX\Delta^T\bbG^{(\tilde i)})_{i\bbv_{\mu} }=\bbG_{\tilde i\tilde i}\bbG_{\tilde i \bbv_{\mu}}^{(1)},
  \end{eqnarray}
 where $\bbv_{\mu}=\Delta\bbe_{\mu}$.

 To conveniently write down the expansions of $\bbG_{\bbu\bbw}$ and $\bold\Pi$ in $A_{\bbv,i,\mu}(g)$ such as (\ref{expand}), (\ref{b1}) and (\ref{b3}) in terms of $\bbG_{\bbu\bbw}^{(j)},j=0,1,2$,
we below introduce the definitions of a tagged group, a refinement of the preceding definition of group $A_{\bbv,i,\mu}(g)$, as in \cite{KY14}.
\begin{deff}\label{0826-1}
A tagged group is a pair $(g, \sigma)$, where $\sigma=(\sigma(l)),\cdots,\sigma(k(g)+1)$ with $\sigma(l)\in\{0,1,2\}$ (denote it by $\sigma=(\sigma(l))_{l=1}^{k(g)+1}$).

(i) If $k(g)=0$, we set
$$A_{\bbv,i,\mu}(g, 0)=\bbG^{(\tilde i)}_{\bbv\bbv}-\bold\Pi^{(\tilde i)}_{\bbv\bbv}=\bbG^{(\tilde i)}_{\bbv\bbv}-\Pi^{( i)}_{\bbv_1\bbv_1}-\Pi^{(i)}_{\bbv_2\bbv_2}-\Pi^{(i)}_{\bbv_3\bbv_3},$$
\begin{eqnarray*}
A_{\bbv,i,\mu}(g, 1)&=&(\bbG^{(\tilde i)}\bold\Delta\bbX^T)_{\bbv i}(\bbX\bold\Delta^T\bbG^{(\tilde i)})_{i\bbv}-(\Pi_1)_{ii}(\Pi^{( i)}_1\bbU_2^T\bbX^T)_{\bbv_2i}(\bbX \bbU_2 \Pi_1^{(i)})_{i \bbv_2}\non
&&-(\Pi_1)_{ii}(\Pi^{(i)}_1\bbU_2^T\bbX^T)_{\bbv_3i}(\bbX \bbU_2 \Pi_1^{(i)})_{i \bbv_3},
\end{eqnarray*}
$$A_{\bbv,i,\mu}(g, 2)=-(\bbG^{(\tilde i)}\bold\Delta\bbX^T)_{\bbv i}-(\bbX\bold\Delta^T\bbG^{(\tilde i)})_{i \bbv}+(\Pi_1)_{ii}(\Pi^{( i)}_1\bbU_2^T\bbX^T)_{\bbv_2i}+(\Pi_1)_{ii}(\bbX \bbU_2 \Pi_1^{(i)})_{i \bbv_2}.$$

(ii) If $k(g)\ge 1$, we define
$$A_{\bbv,i,\mu}(g, \sigma)=[\bbG_{\bbv\bbt_1}]^{\sigma(1)}[G_{\bbs_2\bbt_2}]^{\sigma(2)}...[G_{\bbs_{n(\omega)+1}\bbv}]^{\sigma(k(g)+1)}.$$
Here one should notice that the second term at the right hand side of (\ref{a16}) is ignored since we will discuss how to deal with it later.  

\end{deff}

In the above definition, we write $\Pi^{( i)}_1\bbU_2^T\bbX^T=\Pi^{( i)}_1(0 \ \bbX\bbU_2)^T$ for simplicity. When $k(g)=0$, $A_{\bbv,i,\mu}(g, j),j=1,2,3$ come from the expansion (see (\ref{expand}), (\ref{b1}) and (\ref{b3})) of $(\bbG_{\bbv\bbv}-\bold\Pi_{\bbv\bbv})$ by deducting the diagonal entry and one may refer to (\ref{b4}). Observe that $\bbA_{\bbv,i,\mu}(g, \sigma)$ is a homogeneous polynomial of the variable $X_{i\mu}, \mu=1,..., N$ with the coefficients being $\bbX^{(i)}$-measurable.  One should also notice that we have not considered the derivative of $\Pi(z)$ for $k(g)\ge 1$ to be discussed at the end of this section (refer to (\ref{a16})). 
Also in the sequel, we omit the terms involving $\Pi_1$ or $\Pi^{(i)}_1$ for $k(g)=0$,
which can be handled similarly by checking the following arguments carefully. Below we frequently replace $\mathcal{T}_N(\bbX)$ by $\mathcal{T}_N(\bbX^{(i)})$ where $\mathcal{T}_N(\bbX^{(i)})$ is obtained from
$\mathcal{T}_N(\bbX))$ with $\bbX$ replaced by $\bbX^i$. The purpose of such a replacement is that we need to extract $X_{i\mu}$, $\mu=1,...,N$ from all $\bbX$ involved in $\bbG$, $\bold\Pi$ and $\mathcal{T}_N(\bbX^{(i)})$ so that we may use independence between $X_{i\mu}$ and  $\bbG^{(\tilde i)}$, $\bold\Pi^{i}$ and $\mathcal{T}_N(\bbX^i)$.  The replacement starts from $\mathcal{T}_N(\bbX)$ to $\mathcal{T}_N(\bbX)\mathcal{T}_N(\bbX^{(i)})$ and finally to $\mathcal{T}_N(\bbX^{(i)})$. As before one should note that $\mathcal{T}_N(X^{(i)})=1$ with high probability (recalling (\ref{0821.4})). However in order to simplify notation we do not explicitly write down such steps below and only state $\mathcal{T}_N(\bbX^{(i)})$ and $\mathcal{T}_N(\bbX)$ whenever necessary.


For $\sigma=(\sigma(l))_{l=1}^{k(g)+1}$ define
$$|\sigma|_{\mathbf{i}}=\sum_l \bbI(\sigma(l)\ge 1),\  \ |\sigma|_{\bbv}=\sum_l \bbI(\sigma(l)=2).$$
Notice that $|\sigma|_{\mathbf{i}}$ and $|\sigma|_{\bbv}$ do not depend on $\bbi$ and $\bbv$ since the  expansion (\ref{expand}) always works for any $i$ and $\bbv$.
From the above we may write
$$
\hat \bbA_{\bbv,i,\mu}(g)=\sum\limits_{\sigma_r}\bbA_{\bbv,i,\mu}(g,\sigma_r)(\bbG_{\tilde i\tilde i })^{|\sigma_r|_i}\bbv(\tilde i)^{|\sigma_r|_{\bbv}}
$$
where $\sigma_r=(\sigma_r(l))_{l=1}^{k(g)+1}$ with $\sigma_r(l)\in\{0,1,2\}$.
In view of the arguments above, it suffices to show the following Lemma.
\begin{lem}\label{0823-3}
Suppose that for $r\le q$, $k(g_r)\ge 1$ and $k(g_r)=0$ for $p\ge r\ge q+1$ subject to $\sum_rk(g_r)=3$. For r=1,...,p, set $\sigma_r=(\sigma_r(l))_{l=1}^{k(g)+1}$ with $\sigma_r(l)\in\{0,1,2\}$. Then we have for all $\bbv\in \mathcal{L}$
\begin{eqnarray}\label{0823.7}
N^{-3/2}\sum_{i=1}^{M_1}\sum_{\mu=1}^{N}\left|\mathbb{E}\left((\bbG_{\tilde i\tilde i })^{d_i}\bbv(\tilde i)^{d_{\bbv}}\prod_{r=1}^p\bbA_{\bbv,i,\mu}(g_r,\sigma_r)\mathcal{T}_N(\bbX)\right)\right|=O((N^{C\delta}\Psi)^p+\mathbb{E}\|\bbL_p(\bbX)\|_{\infty}),
\end{eqnarray}
where
\begin{eqnarray}\label{0823.8}
d_{\blacktriangle}=\sum_{r=1}^p|\sigma_r|_{\blacktriangle}
\end{eqnarray}
for $\blacktriangle=i,\bbv $.
\end{lem}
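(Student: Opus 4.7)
The plan is to remove the $i$-th row of $\bbX$ by conditioning and then exploit the resulting independence, following the row-surgery strategy of Section~8 of \cite{KY14}. First I would condition on the $\sigma$-algebra generated by $\bbX^{(i)}$, the matrix obtained from $\bbX$ by deleting its $i$-th row. Under this conditioning, $\bbG^{(\tilde i)}$, $\Pi^{(i)}$, $\Pi_1^{(i)}$ and $\mathcal{T}_N(\bbX^{(i)})$ all become deterministic, and $\mathcal{T}_N(\bbX)$ may be replaced by $\mathcal{T}_N(\bbX^{(i)})$ up to a negligible error by (\ref{0821.4}). Through the resolvent expansions (\ref{0528.3})--(\ref{h0610.1}) for $\bbG$, together with the parallel expansions (\ref{b1})--(\ref{b3}) for $\Pi$ via the proxy $\Pi_1$ introduced in (\ref{0528.5}), each factor $\bbA_{\bbv,i,\mu}(g_r,\sigma_r)$ becomes an explicit polynomial in the row variables $(X_{i\nu})_{\nu=1}^N$ whose coefficients are $\bbX^{(i)}$-measurable and built from $\bbG^{(\tilde i)}$ and $\Pi_1^{(i)}$. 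The factor $(\bbG_{\tilde i\tilde i})^{d_i}$ is $O_\prec(1)$ by the Schur complement identity combined with (\ref{a18}), and $\bbv(\tilde i)^{d_\bbv}=O(1)$ since $\bbv\in\mathcal{L}$ is a column of an orthogonal matrix or of $\Delta$.

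Next I would take the conditional expectation over the $i$-th row. Because only the first two moments of $X_{i\nu}$ are required to match those of the Gaussian (namely $\mathbb{E}X_{i\nu}=0$ and $\mathbb{E}X_{i\nu}^2=1/N_1$), and higher moments satisfy $|X_{i\nu}|^k=O_\prec(N^{-k/2})$, the expectation reduces to a sum over Wick-type pairings of the extracted row variables. Each completed pair contributes a factor $1/N$ together with a residual sum over a free intermediate index of the form $\sum_\nu (\Delta^T\bbG^{(\tilde i)})_{\nu\bbw_1}(\bbG^{(\tilde i)}\Delta)_{\bbw_2\nu}$, which is controlled by the Green function sum bounds (\ref{g28})--(\ref{h28}) applied to $\bbG^{(\tilde i)}$; the local law for $\bbG^{(\tilde i)}$ on $\bbX^{(i)}$ is inherited from the induction hypothesis (\ref{1116.4}) through a standard low-rank perturbation argument. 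For the $p-q$ factors with $k(g_r)=0$, the term $\bbA_{\bbv,i,\mu}(g_r,0)=\bbG_{\bbv\bbv}^{(\tilde i)}-\Pi_{\bbv\bbv}^{(\tilde i)}$ carries no row dependence and is bounded directly by $O_\prec(\Psi)$ under the induction hypothesis, or else is absorbed into the self-consistent error $\mathbb{E}\|\bbL_p(\bbX)\|_\infty$ on the right-hand side of (\ref{0823.7}).

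The main obstacle is the combinatorial bookkeeping that converts the three available $X_{i\nu}$-extractions (reflecting $\sum_r k(g_r)=3$) into a net gain that offsets the prefactor $N^{-3/2}\sum_{i,\mu}\sim N^{3/2}$. The delicate subcases arise when some of the extractions pair internally within a single $\bbA_{\bbv,i,\mu}(g_r,\sigma_r)$ while the remaining extractions must cross-pair with a different factor, and when an odd number of $X_{i\nu}$'s survives a pairing, in which case one must use $\mathbb{E}X_{i\nu}=0$ to kill it. Since the third-moment matching condition used in Section 7.1 has been dropped, special care is required to ensure that no admissible pairing leaves behind an $X_{i\nu}^3$ contribution that would previously have been cancelled by the matching; such configurations must instead be shown to produce either a $\Psi$-factor via (\ref{g28})--(\ref{h28}) or a product that feeds back into $\mathbb{E}\|\bbL_p(\bbX)\|_\infty$. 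Enumerating all admissible pairings and verifying that the induction-hypothesis bound on $(\bbG^{(\tilde i)}-\Pi^{(i)})$ propagates cleanly through each case is the central technical step, paralleling Lemmas 8.7--8.9 of \cite{KY14} adapted to the linearization $\bbH$ and its proxy $\Pi_1$.
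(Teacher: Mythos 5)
Your overall route is the same as the paper's: condition on $\bbX^{(i)}$, display each $\bbA_{\bbv,i,\mu}(g_r,\sigma_r)$ as a polynomial in the row entries with $\bbX^{(i)}$-measurable coefficients via the expansions of $\bbG$ and the proxy $\Pi_1$, and then gain smallness from the conditional expectation $\mathbb{E}_i$ through a pairing count controlled by (\ref{g28})--(\ref{h28}) and (\ref{0812.2}). However, there is a genuine gap in your treatment of the factor $(\bbG_{\tilde i\tilde i})^{d_i}$. You propose to dispose of it as an $O_\prec(1)$ prefactor, but $\bbG_{\tilde i\tilde i}$ is a diagonal entry of the \emph{full} resolvent and depends on the $i$-th row of $\bbX$; it is correlated with exactly the row variables over which you intend to take $\mathbb{E}_i$. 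If you bound it (or pull it out) before taking the conditional expectation, you are reduced to estimating $\mathbb{E}\prod_r|\bbA_{\bbv,i,\mu}(g_r,\sigma_r)|$, and for the critical case $\sum_r k(g_r)=3$ this only reproduces the rough bounds (\ref{1117.5})--(\ref{1117.6}), which do not beat the prefactor $N^{-3/2}\sum_{i,\mu}\sim N^{3/2}$ --- this is precisely why the third-moment matching was needed in the earlier section and why this lemma exists. The cancellation from $\mathbb{E}X_{i\nu}=0$ that replaces the matching condition can only be harvested if \emph{all} row dependence, including that of $\bbG_{\tilde i\tilde i}$, is written as a polynomial in the row entries with $\bbX^{(i)}$-measurable coefficients.

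The paper supplies exactly this missing step: it writes $\bbG_{\tilde i\tilde i}=-(Y_i+Z_i)^{-1}$ with $Y_i$ the conditional expectation of the quadratic form, uses $|Z_i|\prec N^{-\tau/2+2\delta}$ and a Taylor expansion (\ref{h0608.1})--(\ref{h0417.2}) to replace $(\bbG_{\tilde i\tilde i})^{d_i}$ by $\sum_k\mathcal{Y}_{i,k}(\bbX\mathbf{\Delta}^T\bbG^{(\tilde i)}\mathbf{\Delta}\bbX^T)_{ii}^k$ with $\bbX^{(i)}$-measurable coefficients, justified by the two-sided bound (\ref{b7}) on $Y_i$ and the replacement estimate (\ref{0825.2})--(\ref{0825.3}). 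The extra quadratic forms then enter the pairing bookkeeping (\ref{0725.3})--(\ref{0725.5}) through the exponents $k_l$, and the key conditional estimate (\ref{0117.1}) produces the gain $N^{-\mathbf{1}(d_{\bbv}=0)/2}$ (complemented, when $d_{\bbv}\ge 1$, by summing $\bbv(\tilde i)^{d_{\bbv}}$ over $i$), which is the actual mechanism compensating for the dropped third-moment matching. Your proposal defers this to ``the central technical step'' of enumerating pairings, but with $(\bbG_{\tilde i\tilde i})^{d_i}$ left unexpanded that step cannot even be set up, so as written the argument does not close.
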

Before proving Lemma \ref{0823-3}, we give a rough bound of $\bbA$ first. This bound helps us to connect the left hand side of (\ref{0823.7}) with the desired bound $\bbL_p(\bbX)$.
\begin{lem}\label{0804-1}[Rough bounds on $\bbA_{\bbv,i,\mu}(g,\sigma)$.] Assume that ($A_{m-1}$) holds. Then for $z\in S_m$, we have
\begin{eqnarray}\label{0804.2}
|\bbA_{\bbv,i,\mu}(g,\sigma)|\prec N^{2\delta(k(g)+1)}.
\end{eqnarray}
If $k(\omega)=0$, then
\begin{eqnarray}\label{0804.3}
|\bbA_{\bbv,i,\mu}(g,\sigma)|\prec N^{(C_0/2+1)\delta}\Psi+F_{\bbv}(X)+F_{\bbe_{\tilde i}}(X).
\end{eqnarray}
\end{lem}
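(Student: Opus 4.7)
My strategy is to treat the two bounds separately, using the inductive hypothesis $(A_{m-1})$ to control individual entries of $\bbG$ and, for the refined bound, the resolvent expansion (\ref{expand}) to extract the ``full'' differences $F_{\bbv}(X)$ and $F_{\bbe_{\tilde i}}(X)$.

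For the general bound (\ref{0804.2}): when $k(g)\geq 1$, Definition \ref{0826-1}(ii) expresses $\bbA_{\bbv,i,\mu}(g,\sigma)$ as a product of $k(g)+1$ entries $\bbG_{\bbs_{l}\bbt_{l}}$ raised to bounded powers $\sigma(l)\in\{0,1,2\}$. Lemma \ref{1116-3} supplies $|\bbG_{\bbs\bbt}-\Pi_{\bbs\bbt}|\prec N^{2\delta}$ on $S_{m}$, while $\|\Pi\|\prec 1$ by (\ref{0304.5}); hence each factor is $\prec N^{2\delta}$ and the product bound $\prec N^{2\delta(k(g)+1)}$ follows. When $k(g)=0$, the analogue of Lemma \ref{1116-3} for $\bbG^{(\tilde i)}$ (obtained from $\bbG$ by one resolvent step) combined with the elementary large-deviation estimate $|\bbX_{i}\bbb|\prec N^{-1/2+\delta}\|\bbb\|$ for deterministic $\bbb$ yields $|\bbA_{\bbv,i,\mu}(g,\sigma)|\prec N^{2\delta}$, matching (\ref{0804.2}) with $k(g)+1=1$.

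For the refined bound (\ref{0804.3}) when $k(g)=0$ and $\sigma(1)=0$, I would use the resolvent expansion (\ref{expand}) with $\bbu=\bbw=\bbv$, together with the analogous identities (\ref{b1})--(\ref{b3}) for $\Pi$, to write
\[
\bbA_{\bbv,i,\mu}(g,0)=\bigl[\bbG_{\bbv\bbv}-\Pi_{\bbv\bbv}\bigr]-\bbv(\tilde i)^{2}\bigl[\bbG_{\tilde i\tilde i}-\Pi_{\tilde i\tilde i}\bigr]+\mathcal{E},
\]
where the first two brackets coincide with $F_{\bbv}(X)$ and $F_{\bbe_{\tilde i}}(X)$ up to the cutoff $\mathcal{T}_{N}(\bbX)$ (which equals $1$ with high probability by (\ref{0821.4})), and $\mathcal{E}$ collects differences of quadratic and linear forms in the $i$-th row $\bbX_{i}$ such as $\bbG_{\tilde i\tilde i}(\bbG^{(\tilde i)}\mathbf{\Delta}\bbX^{T})_{\bbv i}(\bbX\mathbf{\Delta}^{T}\bbG^{(\tilde i)})_{i\bbv}$ minus its $\Pi^{(i)}_{1}$ analogue. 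Since $\bbX_{i}$ is independent of both $\bbG^{(\tilde i)}$ and $\Pi^{(i)}_{1}$, the standard large-deviation estimate $|\bbX_{i}B\bbX_{i}^{T}-N_{1}^{-1}\mathrm{tr}\,B|\prec N_{1}^{-1}\sqrt{\mathrm{tr}\,BB^{*}}$ for Hermitian $B$ independent of $\bbX_{i}$, combined with the Ward identity $\sum_{j}|\bbG^{(\tilde i)}_{\bbv j}|^{2}=\Im\bbG^{(\tilde i)}_{\bbv\bbv}/\eta$ and Theorem \ref{0817-1} applied to $\bbG^{(\tilde i)}$, gives $|\mathcal{E}|\prec\Psi$. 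For $\sigma(1)=1,2$, $\bbA_{\bbv,i,\mu}(g,\sigma)$ is by construction already a quadratic (respectively linear) form in $\bbX_{i}$ minus its $\Pi^{(i)}_{1}$ counterpart, so the same concentration arguments deliver the bound directly, with any extra $N^{\delta}$ factor absorbed into the exponent $C_{0}/2+1$.

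The main obstacle will be the bookkeeping of $N^{\delta}$ losses accumulated when propagating $(A_{m-1})$ from $\bbG$ to $\bbG^{(\tilde i)}$ and when converting the trace-approximation errors into bounds by $\Psi$; this requires exploiting the relation between $\operatorname{Im}m(z)$ and the spectral density near the right edge $\mu_{N}$ (via (\ref{a18})--(\ref{a19})) together with the crude norm bound (\ref{a39}) to control resolvent differences uniformly in $i$. The calculation parallels the analogous estimates in Sections 8--9 of \cite{KY14}, with the novelty being the $3\times3$ block structure of $\bbH$ and the $\Pi_{1}$ versus $\Pi$ discrepancy (cf. (\ref{b1})--(\ref{b3})), which must be handled uniformly in the test vector $\bbv\in\mathcal{L}$.
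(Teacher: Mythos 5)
Your overall strategy --- rough entry bounds from $(A_{m-1})$, the minor expansion (\ref{expand}) together with (\ref{b1})--(\ref{b3}), and large-deviation plus Ward-identity estimates for the extracted row --- is the same route the paper takes (its proof records the $(A_{m-1})$-consequence (\ref{0804.4}) and then defers to Lemma 8.9 of \cite{KY14}). Still, two points need repair. First, you misread the tagged-group notation: in Definition \ref{0826-1}(ii) the superscript $\sigma(l)$ is not a power, $[\bbG_{\bbs_l\bbt_l}]^{\sigma(l)}$ denotes the component $\bbG^{(\sigma(l))}_{\bbs_l\bbt_l}$ of the expansion (\ref{0917.1}), the factors $(\bbG_{\tilde i\tilde i})^{|\sigma|_{\mathbf{i}}}$ and $\bbv(\tilde i)^{|\sigma|_{\bbv}}$ having been pulled out separately. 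Hence for $k(g)\ge 1$ the factors to be bounded are minor entries $\bbG^{(\tilde i)}_{\bbs\bbt}$ and linear forms such as $(\bbX\mathbf{\Delta}^T\bbG^{(\tilde i)})_{i\bbs}$; Lemma \ref{1116-3} alone does not bound the latter, and the large-deviation step you reserve for the $k(g)=0$ case is needed here as well (it does give $\prec N^{2\delta}$ per factor, so (\ref{0804.2}) survives, whereas your reading would produce the exponent $2\delta\sum_l\sigma(l)$ rather than the claimed $2\delta(k(g)+1)$).

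Second, and more substantively, for (\ref{0804.3}) you control $\Im\bbG^{(\tilde i)}_{\bbv\bbv}$ by ``Theorem \ref{0817-1} applied to $\bbG^{(\tilde i)}$''. That is circular: this lemma sits inside the bootstrap that proves Theorem \ref{0817-1}, and on $S_m$ the only admissible input is $(A_{m-1})$. The correct substitute is precisely the second bound in (\ref{0804.4}), $\Im\bbG_{\bbv\bbv}\prec N^{2\delta}(\Im m+N^{C_0\delta}\Psi)$, transferred to the minor; feeding this into the Ward-type bound $\sum_j|(\mathbf{\Delta}^T\bbG^{(\tilde i)})_{j\bbv}|^2\lesssim\Im\bbG^{(\tilde i)}_{\bbv\bbv}/\eta$ is exactly what produces the prefactor $N^{(C_0/2+1)\delta}$ in (\ref{0804.3}), the $C_0/2$ arising from the square root of $N^{C_0\delta}$; in your write-up this constant appears only as an unexplained ``extra $N^{\delta}$ factor absorbed''. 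With these two corrections (and keeping the cutoff $\mathcal{T}_N(\bbX)$ so that the terms you isolate really are $F_{\bbv}(X)$ and $F_{\bbe_{\tilde i}}(X)$ as defined in (\ref{a5})), your argument coincides with the paper's, i.e.\ with the proof of Lemma 8.9 in \cite{KY14} adapted to the $3\times 3$ block linearization.
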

\begin{proof}[Proof of Lemma \ref{0804-1}]
By the arguments similar to Lemma \ref{1116-3}, it is easy to get the following bound for $z\in S_m$ given $A_{m-1}$:
\begin{eqnarray}\label{0804.4}
(\bbG-\Pi)_{\bbv\bbv}=O_{\prec}(N^{2\delta}), \  \Im \bbG_{\bbv\bbv}\prec N^{2\delta}(\Im m+N^{C_0\delta}\Psi).
\end{eqnarray}
The remaining argument is similar to that for Lemma 8.9 in \cite{KY14} and we ignore details here.
\end{proof}

Noticing that $\bbG_{\tilde i\tilde i}$ in the left hand side of (\ref{0823.7}) contains $X_{i\mu}, \mu=1,...,N$, we need to extract $X_{i\mu}$ from it. To this end, we use the following resolvent expansion for the diagonal entry of $\bbG$
$$\bbG_{\tilde i\tilde i}=-(Y_{i}+Z_{i})^{-1},$$
where
$$Y_{i}=\mathbb{E}[(\bbX\bold\Delta^T\bbG^{(\tilde i)}\bold\Delta\bbX^T)_{ii}|X^{(i)}]=\frac{1}{N}\sum_{j}(\bold\Delta^T\bbG^{(\tilde i)}\bold\Delta)_{jj}, \ \ Z_{i}=(\bbX\bold\Delta^T\bbG^{(\tilde i)}\bold\Delta\bbX^T)_{ii}-Y_{i}.$$
Using the large deviation bound, we find out that $|Z_i|\prec N^{-\tau/2+2\delta}$. 
This, together with Taylor's expansion, implies that there exists a constant $K=K(\tau)$ such that
\begin{eqnarray}\label{h0608.1}
\bbG_{\tilde i\tilde i}=\sum_{k=0}^K\frac{Z_{i}^k}{k!(-Y_{i})^{k+1}}+O_{\prec}(N^{-10}).
\end{eqnarray}
Expanding further $Z_i$, we have
\begin{eqnarray}\label{h0417.2}
\bbG_{\tilde i\tilde i}=\sum_{k=0}^KY_{i,k}(\bbX\bold\Delta^T\bbG^{(\tilde i)}\bold\Delta\bbX^T)_{ii}^k+O_{\prec}(N^{-10}),
\end{eqnarray}
where the coefficients $Y_{ik}$ including $1/Y_i$ and $Y_i$ are $\bbX^{(i)}$ measurable.
In order to apply Lemma \ref{1123-3} an upper bound of $|Y_{i,k}|$ is needed. From (\ref{h0608.1}) and the definition of $Z_i$ one can see that $Y_{i,k}$ is a finite order polynomial function of $(Y_i)^{-1}$ and $Y_i$, Therefore it suffices to develop upper and lower bounds of $Y_i$. Recalling the definition of $\bold \Delta$ at (\ref{1125.10}), we have
$$ \bbU_1^T\bbU_1=\bbI , \ \bbU_2^T\bbU_2=\bbI, \ \text{and} \ \bold\Delta\bold\Delta^T=\left(
  \begin{array}{ccc}
    \bbI & 0&0 \\
    0  &0 & 0 \\
    0 &0 & \bbI\\
  \end{array}
\right).$$
From (\ref{1219.1}) we have
$$|Y_i|\ge \Im Y_i=\frac{1}{N}tr \Im (\bbG^{(\tilde i)}\bold\Delta\bold\Delta^T)=\frac{1}{N} tr \bbA_2^{(\tilde i)}\Im\bbG^{(\tilde i)}_{N_1}(\bbA_2^{(\tilde i)})^T\ge\frac{1}{N} tr \Im\bbG^{(\tilde i)}_{N_1}\geq C\eta,$$
where $\bbA_j^{(\tilde i)}$ are respectively obtained from $\bbA_j,j=2,3$ by deleting the $i$-th row of $\bbX$.
On the other hand we conclude from (\ref{1219.1}) that
 $$ |Y_i|\le \|\bbA^{(\tilde i)}_3\|+\frac{\|\bbA^{(\tilde i)}_2\|^2}{\eta}.$$
These, together with Lemma \ref{1121-1} and an estimate similar to (\ref{0821.2}),  implies that there exists a constant c such that
\begin{equation}\label{b7}
N^{-c} \le |Y_i\mathcal{T}_N(\bbX^{(i)})|\le N^{c}.
\end{equation}

We are now in a position to replace $\bbG_{\tilde i\tilde i}$ by $\sum_{k=0}^K\mathcal{Y}_{i,k}(\bbX\bold\Delta^T\bbG^{(\tilde i)}\bold\Delta\bbX^T)_{ii}^k$ because
 \begin{eqnarray}\label{0825.2}
&&\mathbb{E}\prod_{r=1}^p|\bbA_{\bbv,i,\mu}(g_r,\sigma_r)|\left|\bbG_{ii}^{d_i}-(\sum_{k=0}^K\mathcal{Y}_{i,k}(\bbX\bold\Delta^T\bbG^{(\tilde i)}\bold\Delta\bbX^T)_{ii}^k)^{d_{i}}\mathcal{T}_N(\bbX^{(i)}))\right|\prec N^{-10+2\delta d_i}\mathbb{E}\prod_{r=1}^p|\bbA_{\bbv,i,\mu}(g_r,\sigma_r)|\non
&&\prec N^{-5}((N^{C\delta}\Psi)^p+\bbL_p(\bbX)),
\end{eqnarray}
where we apply Lemma \ref{0804-1}, Lemma \ref{1123-3}, (\ref{h0417.2}), (\ref{b7}) and the fact that there are at most three r such that $k(g_r)\ge 1$. In view of (\ref{0825.2}) proving Lemma \ref{0823-3} reduces to showing
\begin{eqnarray}\label{0825.3}
&&N^{-3/2}\sum_{i=1}^{M_1}\sum_{\mu=1}^{N}\left|\mathbb{E}\left((\sum_{k=0}^K Y_{i,k}(\bbX\bold\Delta^T\bbG^{(\tilde i)}\bold\Delta\bbX^T)_{ii}^k)^{d_i}\bbv(i)^{d_{\bbv}}\prod_{r=1}^p\bbA_{\bbv,i,\mu}(g_r,\sigma_r)\mathcal{T}_N(\bbX^{(i)})\right)\right|\\
&&=O((N^{C\delta}\Psi)^p+\mathbb{E}\|\bbL_p(\bbX)\|_{\infty}).\nonumber
\end{eqnarray}
We further expand 
$$(\sum_{k=0}^KY_{i,k}(\bbX\bold\Delta^T\bbG^{(\tilde i)}\bold\Delta\bbX^T)_{ii}^k)^{d_{i}}=\sum_{k=0}^{Kd_{i}}\mathcal{Y}_{i,k}(\bbX\bold\Delta^T\bbG^{(\tilde i)}\bold\Delta\bbX^T)_{ii}^k,$$
where the coefficient $\mathcal{Y}_{i,k}$ is $\bbX^{(i)}$ measurable and bounded by
$$|\mathcal{Y}_{i,k}|\prec N^{Cd_{i}\delta}.$$
Here we don't need the explicit expression of $\mathcal{Y}_{i,k}$ and its upper bound is enough by checking the following arguments carefully.
For any tagged group $(g,\sigma)$, we write
$$\bbA_{\bbv,i,\mu}(g,\sigma)=\bbA^-_{\bbv,i,\mu}(g,\sigma)\bbA^+_{\bbv,i,\mu}(g,\sigma),$$
where $\bbA^-_{\bbv,i,\mu}(g,\sigma)$ is measurable with respect to $\bbX^i$ and $\bbA^+_{\bbv,i,\mu}(g,\sigma)$ is a product of the terms
$$(\bbG^{(\tilde i)}\bold\Delta \bbX^T)_{\bbx i}, \ \text{or} \ (\bbX\bold\Delta^T\bbG^{(\tilde i)})_{i\bbx }, \  \bbx\in \{\bbv, \bold\Delta\bbe_{\mu}\}.$$
So the left hand side of (\ref{0825.3}) is bounded by
\begin{equation}\label{0725.2}
N^{-3/2}N^{Cd_{i}\delta}\sum_{i=1}^{M_1}\sum_{\mu=1}^N \bbv(\tilde i)^{d_{\bbv}}E\Big[\Big|\mathbb{E}\prod_{r=1}^{p}\bbA^-_{\bbv,i,\mu}(g,\sigma)\Big|\Big|\mathbb{E}_i\prod_{r=1}^{p} \bbA^+_{\bbv,i,\mu}(g_r,\sigma_r)(\bbX\bold\Delta^T\bbG^{(\tilde i)}\bold\Delta\bbX^T)_{ii}^k\mathcal{T}_N(\bbX^{(i)})\Big|\Big]
\end{equation}
where $\mathbb{E}_i$ stands for taking expectation over the random variables at the i-th row of $\bbX$ (conditional expectation).

We below first show that for the inner conditional expectation and $k\le Kd_i$,
\begin{eqnarray}\label{0117.1}
\left|\mathbb{E}_i\prod_{r=1}^{p} \bbA^+_{\bbv,i,\mu}(g_r,\sigma_r)(\bbX\bold\Delta^T\bbG^{(\tilde i)}\bold\Delta\bbX^T)_{ii}^k\mathcal{T}_N(\bbX^{(i)})\right|\prec N^{-\mathbf{1}(d_{\bbv}=0)/2}(N^{(C_0/2+C)\delta}\Psi)^{d_{\bbx}-\mathbf{1}(d_{\bbx}\ge 3)}.
\end{eqnarray}
The proof of (\ref{0117.1}) is similar to that of Lemma 8.11 in \cite{KY14} and the transitional arguments from (\ref{0117.1}) to (\ref{0825.3}) are the same as
those from (8.32) in \cite{KY14} to the end of section 8.5 in \cite{KY14}. We below only list some difference involved in our derivatives when proving (\ref{0117.1}). Define
\begin{eqnarray}\label{0529.3}
d_{\bbx}=\sum_{r=1}^{p}\deg(\bbA^+_{\bbv,i,\mu}(g_r,\sigma_r))=\sum_{r=1}^{p}\deg(\bbA_{\bbv,i,\mu}(g_r,\sigma_r)),
\end{eqnarray}
where $\deg(\bbA_{\bbv,i,\mu}(g_r,\sigma_r))$ stands for the degree of the polynomial $\bbA_{\bbv,i,\mu}(g_r,\sigma_r)$ in terms of $X_{i\mu}$.
We abbreviate $d_{\bbx}$ by $d$. Recalling the definition of $\bbA^+$, we have
$$\prod_{r=1}^{p}\bbA^+_{\bbv,i,\mu}(g_r, \sigma_r)(\bbX\bold\Delta^T\bbG^{(\tilde i)}\bold\Delta\bbX^T)_{ii}^k=\sum_{j_1,...,j_{d+2k}=1}^N\mathcal{G}_{j_1,...,j_d}\tilde {\mathcal{G}}_{j_{d+1},...,j_{d+2k}}\prod_{l=1}^{d+2k}X_{ij_l},$$
where $\mathcal{G}_{j_1,...,j_d}$ is a product of $d$ terms in the set $\{(\bold\Delta\bbG^{(\tilde i)})_{j_l \bbv}, (\bbG^{(\tilde i)}\bold\Delta^T)_{\bbv j_l }, (\bold\Delta^T\bbG^{(\tilde i)}\bold\Delta)_{j_l \mu}, (\bold\Delta^T\bbG^{(\tilde i)}\bold\Delta)_{\mu j_l },$ $l=1,2,..,d.\}$ and ${\mathcal{G}}_{j_{d+1},...,j_{d+2k}}$ is a product of k terms in the set $\{(\bold\Delta\bbG^{(\mu)}\bold\Delta^T)_{j_l j_{l'}}, j_l, j_{l'}=d+1,d+2,..,d+2k.\}$. Since $\mathbb{E}_{\mu}X_{ij}=0$, it is easy to see that the conditional expectation on $\prod_{l=1}^{d+2k}X_{i j_l}$ is nonzero only if each index appears at least twice. The set $\{1,...,d+2k\}$ can be reorganized by several blocks such that each block contain the same indices $j_l$. For example, if $b_1$ and $b_2$ are two different blocks, then the indexes belonging to $b_1$ (and $b_2$) are all equal and any two indexes $a_1\in b_1$, $a_2\in b_2$ are not equal.   For a block $b\subset \{1,...,d+2k\}$, we define $d_b=|b\cap\{1,...,d\}|$ and $k_b=|b\cap\{d+1,...,d+2k\}|$. Here $d_b$ means the number of indices equal to $b$ from $\{1,\cdots,d\}$ and $k_b$ means the number of indices equal to $b$ from $\{d+1,\cdots,d+k\}$. Moreover, we suppose there are L blocks. Hence, we reorganize the  summation as follows:
\begin{eqnarray}\label{0725.3}
&&|\mathbb{E}_{i}\prod_{r=1}^{p}\bbA^+_{\bbv,i,\mu}(g_r, \sigma_r)(\bbX\bold\Delta^T\bbG^{(\tilde i)}\bold\Delta\bbX^T)_{ii}^k|\prec C_qN^{2\delta k}\max_{L}\max_{\{d_l\}}\max_{\{k_l\}}\sum_{j_1,...,j_L}\times\\
&&\prod_{l=1}^L(\mathbb{E}|\bbX_{ij_l}|^{d_l+k_l}(|(\bold\Delta^T\bbG^{(\tilde i)})_{j_l \bbv}|+|(\bbG^{(\tilde i)}\bold\Delta)_{\bbv j_l }|+|(\bold\Delta^T\bbG^{(\tilde i)}\bold\Delta)_{j_l \mu}|+|(\bold\Delta^T\bbG^{(\tilde i)}\bold\Delta)_{\mu j_l }|)^{d_l}),\nonumber
\end{eqnarray}
where $d_l$ and $k_l$ satisfy
\begin{eqnarray}\label{0725.5}\sum_{l=1}^Ld_l=d, \ \sum_{l=1}^Lk_l=2k, \  d_l+k_l\ge 2.\end{eqnarray}
It is straightforward to see that the right hand side of (\ref{0725.3}) is bounded by
\begin{equation}\label{b9}
C_qN^{2\delta k}N^{-d/2-k}\prod_{l=1}^L\big(\sum_{j}\big(|(\bold\Delta^T\bbG^{(\tilde i)})_{j \bbv}|+|(\bbG^{(\tilde i)}\bold\Delta)_{\bbv j }|+|(\bold\Delta^T\bbG^{(\tilde i)}\bold\Delta)_{j \mu}|+|(\bold\Delta^T\bbG^{(\tilde i)}\bold\Delta)_{\mu j }|\big)^{d_l}\big).
\end{equation}
The upper bound of the above term is
$$\sum_{j}(|(\bold\Delta^T\bbG^{(\tilde i)})_{j \bbv}|+|(\bbG^{(\tilde i)}\bold\Delta)_{\bbv j }|+|(\bold\Delta^T\bbG^{(\tilde i)}\bold\Delta)_{j \mu}|+|(\bold\Delta^T\bbG^{(\tilde i)}\bold\Delta)_{\mu j }|)^{d_l}\prec N(N^{(C_0/2+1)\delta}\Psi)^{d_l\wedge 2}N^{2\delta [d_l-2]_+},$$
following from $\sum_{j}|(\bold\Delta^T\bbG^{(\tilde i)})_{j \bbv}|^2\lesssim \frac{\Im G^{(\tilde i)}_{\bbv\bbv}}{\eta}$, $\sum_{j}|(\bold\Delta^T\bbG^{(\tilde i)}\bold\Delta)_{j \mu}|^2\lesssim \frac{\Im G^{(\tilde i)}_{\bbv_{\mu}\bbv_{\mu}}}{\eta}$. 
Therefore we have
$$|\mathbb{E}_{\mu}\prod_{r=1}^{p}\bbA^+_{v,i,\mu}(g_r, \sigma_r)(\bbX\bold\Delta^T\bbG^{(\tilde i)}\bold\Delta\bbX^T)_{ii}^k|\prec \max_L \max_{\{d_l\}} \max_{\{k_l\}}N^{-d/2-k+L}
(N^{(C_0/2+1)\delta}\Psi)^{\sum_l(d_l\wedge 2)}N^{2\delta \sum_l [d_l-2]_+}.$$
The remaining arguments are the same as those below (8.29) in \cite{KY14} (to the end of Section 8.5 in \cite{KY14}).

We next consider the derivative of $\bold\Pi(z)$ since we have only considered the derivative of $\bbG(z)$ for $k(g)\ge 1$(one can refer to the definition of $\bbA_{\bbv,i,\mu}(g,\sigma)$). That is to say, we aim at proving (\ref{1117.2}) for $k=3$ but  we only proved
\begin{equation}\label{0612.1}
N^{-3/2}\sum_{i=1}^{M_1}\sum_{\mu=1}^N\mathbb{E}\left[\prod_{r=1}^{q}C_{s,t,i,\mu}(g_r)\prod_{r=q+1}^{p}B_{s,t,i,\mu}(g_r)\mathcal{T}^{p}_N(\bbX)\right]=O((N^{24\delta}\Psi)^{p}+\|\mathbb{E}\bbL_p(\bbX)\|_{\infty}),
\end{equation}
for $q\le 3$, $\sum_{r=1}^qk(g_r)=3$, $k(g_r)\ge 1$, $r\le q$ and $g_r=0$, $r\ge q+1$, where the definitions of $C_{s,t,i,\mu}$ and $B_{s,t,i,\mu}$ are given at (\ref{a36}) and (\ref{a37}). One may refer to (\ref{0823.7}) for (\ref{0612.1}) (note that $C_{s,t,i,\mu}(g_r)$ has been decomposed as the sum of the terms $\bbA_{\bbv,i,\mu}(g,\sigma)$ and we in fact extract the i-th row of $\bbX$ from C and then get $\bbA_{\bbv,i,\mu}(g,\sigma)$).
This means that we have not considered the second term in (\ref{a16}).
Recalling (\ref{a16}), the $k$-th derivative of each $\bold\Pi(z)$ can be written as
\begin{equation}\label{b8}
D_{s,t,i,\mu}(g_r)=\sum_{j=1}^3(\bbB_1\mathcal{R}_{j1})_{(s_{j1}t_{j1})}(\mathcal{R}_{j2})_{(s_{j2}t_{j2})}...(\mathcal{R}_{jk})_{(s_{jk}t_{jk})}
(\mathcal{R}_{jk+1}\bbB_2^T)_{(s_{jk+1}t_{jk+1})}, \ k(g_r)=k.
\end{equation}
Therefore, in order to prove (\ref{1117.2}) for $k=3$ what remains is to show that
\begin{equation}\label{0612.3}
N^{-3/2}\sum_{i=1}^{M_1}\sum_{\mu=1}^N\mathbb{E}\left[\prod_{r=1}^{l}C_{s,t,i,\mu}(g_r)\prod_{r=l+1}^{q}D_{s,t,i,\mu}(g_r)\prod_{r=q+1}^{p}B_{s,t,i,\mu}(g_r)\mathcal{T}^{p}_N(\bbX)\right]=O((N^{24\delta}\Psi)^{p}+\|\mathbb{E}\bbL_p(\bbX)\|_{\infty}),
\end{equation}
where $q\le 3$, $\sum_{r=1}^qk(g_r)=3$, $k(g_r)\ge 1$, $r\le q$ and $g_r=0$, $r\ge q+1$.

 We first consider the case when $l=0$, which implies that there is no $C_{s,t,i,\mu}(g_r)$ for $k(g_r)\ge 1$.  We start with the case when $l=0, q=3$, which corresponds to $k(g_j)=1,j=1,2,3$ due to $\sum_{r=1}^qk(g_r)=3$ with $k(g_r)\ge 1$. As a sequence, each summand in $D_{s,t,i,\mu}(g_r)$ becomes
$ (\bbB_1\mathcal{R}_{j1})_{(s_{j1}t_{j1})}(\mathcal{R}_{j2}\bbB_2^T)_{(s_{j2}t_{j2})}$.
 Recalling the definition of $\mathcal{H}_{1i}$, $\mathcal{H}_{1\mu}$, $\mathcal{H}_{sti}$ and $\mathcal{H}_{st\mu}$ above (\ref{1117.4}), the left hand side of (\ref{0612.3}) can be bounded by
\begin{eqnarray}\label{0612.4}
&&\Big|N^{-3/2}\sum_{i=1}^{M_1}\sum_{\mu=1}^N\mathbb{E}\left[\prod_{r=1}^{3}D_{s,t,i,\mu}(g_r)\prod_{r=4}^{p}B_{s,t,i,\mu}(g_r)\mathcal{T}^{p}_N(\bbX)\right]|\Big|
\\
&=&N^{-3/2}
\Big|\sum_{i=1}^{M_1}\sum_{\mu=1}^N\mathbb{E}\left[\prod_{r=1}^{3}D_{s,t,i,\mu}(g_r)F_{st}^{p-3}\mathcal{T}^{p}_N(\bbX)\right]\Big|\le 3^3 N^{-3/2}\sum_{i=1}^{M_1}\sum_{\mu=1}^N\Big|\mathbb{E}\left[\mathcal{H}_{sti}^3\mathcal{H}_{st\mu}^3F_{st}^{p-3}\mathcal{T}^{p}_N(\bbX)\right]\Big|\nonumber
\end{eqnarray}
Recalling (\ref{h28}), we have
$$\sum_{i=1}^{M_1}\sum_{\mu=1}^{N}\mathcal{H}_{sti}^3\mathcal{H}_{st\mu}^3\prec 1.$$
Therefore the right hand side of (\ref{0612.4}) can be bounded by
$$N^{-3/2}\Big|\mathbb{E}\left[F_{st}^{p-3}\mathcal{T}^{p}_N(\bbX)\right]\Big|=O((N^{24\delta}\Psi)^{p}+\|\mathbb{E}\bbL_p(\bbX)\|_{\infty}),$$
using the fact that $N^{-1/2}\lesssim \Psi$. This ensures that (\ref{0612.3}) holds for $l=0$ and $q=3$.

We next consider the case when $l=0, q=2$, which forces $k(g_1)=1,k(g_2)=2$ due to $\sum_{r=1}^qk(g_r)=3$ with $k(g_r)\ge 1$.
Similar to (\ref{0612.4}) we have the upper bound
\begin{eqnarray}\label{0612.5}
&&N^{-3/2}\Big|\sum_{i=1}^{M_1}\sum_{\mu=1}^N\mathbb{E}\left[\prod_{r=1}^{2}D_{s,t,i,\mu}(g_r)\prod_{r=3}^{p}B_{s,t,i,\mu}(g_r)\mathcal{T}^{p}_N(\bbX)\right]
\Big|=N^{-3/2}\Big|\sum_{i=1}^{M_1}\sum_{\mu=1}^N\mathbb{E}\left[\prod_{r=1}^{2}D_{s,t,i,\mu}(g_r)F_{st}^{p-2}\mathcal{T}^{p}_N(\bbX)\right]\Big|\non
&&\le 3^2 N^{-3/2}\Big|\sum_{i=1}^{M_1}\sum_{\mu=1}^N\mathbb{E}\left[(\mathcal{H}_{sti}^3\mathcal{H}_{st\mu}+\mathcal{H}_{sti}\mathcal{H}_{st\mu}^3+\mathcal{H}_{sti}^2
\mathcal{H}_{st\mu}^2)F_{st}^{p-2}\mathcal{T}^{p}_N(\bbX)\right]\Big|\non
&&\prec \mathbb{E}\left[(\Psi^3+\Psi^2)F_{st}^{p-2}\mathcal{T}^{p}_N(\bbX)\right]=O((N^{24\delta}\Psi)^{p}+\|\mathbb{E}\bbL_p(\bbX)\|_{\infty}),
\end{eqnarray}
where we apply the Cauchy-Schwarz inequality together with (\ref{h28}) such that $\sum_{i=1}^{M_1}\mathcal{H}_{sti}\prec \sqrt N $ and $\sum_{\mu \in I_N}\mathcal{H}_{st\mu}\prec \sqrt N $.

As for $l=0, q=1$ we have $k(g_1)=3$ due to $\sum_{r=1}^qk(g_r)=3$ with $k(g_r)\ge 1$.
We also have a similar upper bound
\begin{eqnarray}\label{0612.6}
&&N^{-3/2}\Big|\sum_{i=1}^{M_1}\sum_{\mu=1}^N\mathbb{E}\left[D_{s,t,i,\mu}(g_1)\prod_{r=2}^{p}B_{s,t,i,\mu}(g_r)\mathcal{T}^{p}_N(\bbX)\right]
\Big|=N^{-3/2}\Big|\sum_{i=1}^{M_1}\sum_{\mu=1}^N\mathbb{E}\left[D_{s,t,i,\mu}(g_1)F_{st}^{p-1}\mathcal{T}^{p}_N(\bbX)\right]\Big|\non
&&\le 3 N^{-3/2}\Big|\sum_{i=1}^{M_1}\sum_{\mu=1}^N\mathbb{E}\left[(\mathcal{H}_{sti}^2+\mathcal{H}_{st\mu}^2)F_{st}^{p-1}\mathcal{T}^{p}_N(\bbX)\right]\Big|,\non
&&\prec \mathbb{E}\left[\Psi F_{st}^{p-1}\mathcal{T}^{p}_N(\bbX)\right]=O((N^{24\delta}\Psi)^{p}+\|\mathbb{E}\bbL_p(\bbX)\|_{\infty}).
\end{eqnarray}
Therefore (\ref{0612.3}) holds when $l=0$.

When $l\neq 0$ in (\ref{0612.3}) we below consider the case when $l=1, q=2$ only and the other cases can be handled similarly. In this case we need to show that
\begin{equation}\label{0612.7}
N^{-3/2}\sum_{i=1}^{M_1}\sum_{\mu=1}^N\mathbb{E}\left[C_{s,t,i,\mu}(g_1)D_{s,t,i,\mu}(g_2)\prod_{r=3}^{p}B_{s,t,i,\mu}(g_r)\mathcal{T}^{p}_N(\bbX)\right]=O((N^{24\delta}\Psi)^{p}+\|\mathbb{E}\bbL_p(\bbX)\|_{\infty}),
\end{equation}
where $k(g_1)=2, k(g_2)=1$ or $k(g_1)=1, k(g_2)=2$.  Similar to the arguments above, the left hand side of (\ref{0612.7}) can be bounded by
$$\Big|F_{st}^{p-2}N^{-3/2}\sum_{i=1}^{M_1}\sum_{\mu=1}^{N}(\mathcal{H}_{1i}^2\mathcal{H}_{sti}\mathcal{H}_{st\mu}+\mathcal{H}_{1\mu}^2
\mathcal{H}_{sti}\mathcal{H}_{st\mu}+\mathcal{H}_{1i}\mathcal{H}_{1\mu}\mathcal{H}_{sti}\mathcal{H}_{st\mu})\Big|,$$
which is bounded by $|F_{st}^{p-2}\Psi^2|$ by the inequality
$$\sum_{i=1}^{M_1}\sum_{\mu=1}^{N}\mathcal{H}_{1i}^2\mathcal{H}_{sti}\mathcal{H}_{st\mu}\prec \sum_{i=1}^{M_1}\sum_{\mu=1}^{N}\mathcal{H}_{1i}^2\mathcal{H}_{st\mu}\prec N^{3/2}\Psi^2.$$
This ensures (\ref{0612.7}).

\section{Local law in average (\ref{0310.2})}\label{sec10}
The purpose of this subsection is to prove  the following ((\ref{0310.2}) in Theorem \ref{0817-1})
\begin{eqnarray}\label{0821.11}|m_N(z)-m(z)|\prec \frac{1}{N\eta}.
\end{eqnarray}
As pointed out in the paragraph below (\ref{1230.1}), (\ref{0310.2}) holds when the entries $\{X_{ij}\}$ of $\bbX$ are the standard Gaussian random variable. We next use the interpolation method to prove (\ref{0310.2}) for the general distributions as in proving (\ref{0310.1}). However we do not need induction on the imaginary part of $z$ unlike before due to existence of (\ref{0310.1}). In order to prove (\ref{0821.11}), it is enough to prove that
\begin{eqnarray}\label{0821.12}|m_N(z)-m(z)|\mathcal{T}_N(\bbX)\prec \frac{1}{N\eta}.
\end{eqnarray}

We introduce the notation $\tilde F(X,z)$ as in the last section
$$\tilde F(X,z)=|m_N(z)-m(z)|\mathcal{T}_N(\bbX)=|\frac{1}{N_1}\sum_{k=1}^{N_1}G_{kk}(z)-m(z)|\mathcal{T}_N(\bbX).$$
Checking on Lemmas  \ref{0821-1}, \ref{0822-1}, \ref{1119-5}, (\ref{0821.3}) and (\ref{1123.7}) in the last section it suffices to show
\begin{equation}\label{a34}N^{-k/2}\sum_{i=1}^{M_1}\sum_{\mu=1}^N\mathbb{E}\Big[(\frac{\partial}{\partial \bbX_{i\mu}})^k\tilde F^{p}(\bbX,z)\Big]=O((N^{\delta}\Psi^2)^{2q}+\|\tilde
F^{p}(\bbX,z)\|_{\infty}),\ k\geq 3\end{equation}
where $\delta$ is a sufficiently small constant such that $N^\delta$ is much smaller than $N^\varepsilon$ before $(\ref{0821.12})$ due to the definition of the partial order.
Applying the definition of $B_{s,t,i,\mu}$ in the preceding section with $\bbB_1=\bbB_2=1$ and $s=t=k$, it suffices to prove that for $\sum\limits_{r} k(g_r)=k$
\begin{eqnarray}\label{1119.2}
N^{-k/2}\sum_{i=1}^{M_1}\sum_{\mu=1}^{N}\mathbb{E}\Big(\prod_{r=1}^p\left[\frac{1}{N_1}\sum_{k=1}^{N_1}B_{k,k,i,\mu}(g(r))\right]\mathcal{T}_N(\bbX)\Big)=O((N^{\delta}\Psi^2)^p+\|\mathbb{E}\tilde F^p(X)\|_{\infty}), k\geq 3.
\end{eqnarray}
One can verify (\ref{1119.2}) for $k\geq4$ by repeating the same arguments as in (7.95)-(7.97) in \cite{WY}. The key steps are the following two inequalities:
\begin{eqnarray}\label{1119.4}
\frac{1}{N_1}\sum\limits_{k=1}^{N_1}B_{k,k,i,\mu}(g_h)\prec \Psi^2, \ \ \text{for} \ \ n(g_h)\ge 1,
\end{eqnarray}
and
\begin{eqnarray}\label{0526.1}
\frac{1}{N_1}\sum\limits_{k=1}^{N_1} C_{k,k,i,\mu}(g_h)\prec \Psi^2, \ \ \text{for} \ \ n(g_h)\ge 1.
\end{eqnarray}

Consider (\ref{1119.2}) for $k=3$ now. To this end, as in (\ref{0823.7}) and (\ref{1119.2}), it suffices to prove that
\begin{equation}\label{0826.4}N^{-3/2}\sum_{i=1}^{M_1}\sum_{\mu=1}^{N}|N_1^{-p}\sum_{v_1,...,v_{p}=1}^{N_1}\mathbb{E}\Big(\prod_{r=1}^{p}\bbA_{\bbe_{v_r},i,\mu}(g_r,\sigma_r)G_{\tilde i\tilde i}^{d_{i}}\mathcal{T}_N(\bbX)\Big)|=O_{\prec}((N^{\delta}\Psi^2)^{p}+\mathbb{E}\|\tilde F^p(X)\|_{\infty}),\end{equation}
where $\bbe_{v_r}$ is an $(N+M_1+N_1-M_2)$-dimensional unit vector with the $v_r$-th element being 1 and $1\leq v_r\leq N_1$ (here the size of $\bbe_{v_r}$ is the same as the size of the matrix $\bbG(z)$ ).
One should notice that we don't consider the derivatives of $\bold\Pi(z)$ any more in this subsection since we only care about the upper left $N_1\times N_1$ block matrix of $\bold\Pi(z)$ for the purpose of proving (\ref{0821.11}), which is $m(z)\bbI$ (see (\ref{a17})). 
Hence it suffices to consider the derivative on $\bbG(z)$ and apply its expansion (\ref{expand}). Moreover, in this case, one can see that $d_{\bbv}=0$ since $\bbe_{v_k}(\tilde i)=0, k=1,...,p$ recalling $\tilde i=i+N_1$. Hence there is no factor $\bbv(\tilde{i})^{d_v}$ in (\ref{0826.4}) unlike (\ref{0823.7}). As in (\ref{0825.3}) and (\ref{0725.2}), it then suffices to prove that 
\begin{eqnarray}\label{h0613.1}
&N^{-3/2}N^{d_i\delta}\sum_{i=1}^{M_1}\sum_{\mu=1}^{N}|N_1^{-p}\sum_{v_1,...,v_{p}=1}^{N_1}\mathbb{E}\Big[\prod_{r=1}^{p}\bbA^-(r)\mathbb{E}_{i}\Big(\prod_{r=1}^{p}\bbA^+(r)(\bbX\bold\Delta^T\bbG^{(\tilde i)}\bold\Delta\bbX^T)_{ii}^k\mathcal{T}_N(\bbX)\Big)|\Big]\non
&=O_{\prec}((N^{\delta}\Psi^2)^{p}+\mathbb{E}\|\tilde F^p(X)\|_{\infty}),
\end{eqnarray}
for $k\le Cd_{i}$, where $\sum_r k(g_r)=3$ and $\bbA^{\cdot}(r)=\bbA^{\cdot}_{\bbe_{v_r},i,\mu}(g_r,\sigma_r)$ with $\cdot=-,+$. Here each factor $\bbA^+(r)$ is a product of factors $(\bbG^{(\tilde i)}\bold\Delta X^T)_{ \bbs i}$ and $(X\bold\Delta^T\bbG^{(\tilde i)})_{i \bbs }$, $\bbs\in \{\bbe_{v_r}, \bold\Delta e_{\mu}\}$. We denote the number of factors $(\Delta^T\bbG^{(\tilde i)}\bold\Delta X^T)_{ \mu i}$ and $(X\bold\Delta^T\bbG^{(\tilde i)}\Delta)_{i \mu }$ by $d_{\bbx,\mu,r}$(i.e. $\bbs=\Delta\bbe_{\mu}$) contained in $\bbA^+(r)$ and write $d_{\bbx,\mu}=\sum\limits_{r=1}^pd_{\bbx,\mu,r}$. By (\ref{0310.1}) and Definition \ref{0826-1}
, it is easy to conclude that
 \begin{eqnarray}\label{0812.2}
  \frac{1}{N_1}\sum_{v_r=1}^{N_1}|(\bbG^{(\tilde i)}\bold\Delta)_{ v_r l}(\bbG^{(\tilde i)}\bold\Delta)_{ v_r j}|\prec  \frac{\Im (\bbG^{(\tilde i)})_{jl}}{N\eta}\prec \Psi^2.
  \end{eqnarray}

 Consider $\mathbb{E}_{i}\Big(\prod_{r=1}^{p}\bbA^+(r)(\bbX\bold\Delta^T\bbG^{(\tilde i)}\bold\Delta\bbX^T)_{ii}^k\Big)$.  
 As in (\ref{0725.3})-(\ref{b9}) we obtain
   \begin{eqnarray}\label{h0613.8}
&&|\mathbb{E}_{i}\prod_{r=1}^{p}\bbA^+(r)(\bbX\bold\Delta^T\bbG^{(\tilde i)}\bold\Delta\bbX^T)_{ii}^k|\prec \max_{L}\max_{\{d_l\}}\max_{\{k_l\}}\sum_{j_1,...,j_L}\times\prod_{l=1}^L
\Bigg(N^{-d/2-k}(|(\bold\Delta^T\bbG^{(\tilde i)}\bold\Delta)_{j_l \mu}|\nonumber\\
&&+|(\bold\Delta^T\bbG^{(\tilde i)}\bold\Delta)_{\mu j_l }|)^{d_l-\sum_{r=1}^{p} d_{l,r}}\prod_{r=1}^{p}\left[(|(\bold\Delta^T\bbG^{(\tilde i)})_{j_l \bbe_{v_r}}|+|(\bbG^{(\tilde i)}\bold\Delta)_{\bbe_{v_r} j_l }|)^{d_{l,r}}\right]\Bigg),
\end{eqnarray}
where $d_{l,r}$ denotes the number of the factors $(\bold\Delta^T\bbG^{(\tilde i)})_{j_l \bbe_{v_r}}$ and $(\bbG^{(\tilde i)}\bold\Delta)_{\bbe_{v_r} j_l }$ (essentially, it is the number of the factors $(\bbG^{(\tilde i)}\bold\Delta X^T)_{ \bbs i}$ and $(X\bold\Delta^T\bbG^{(\tilde i)})_{i \bbs }$ with $\bbs=\bbe_{v_r}$ in $\bbA^+(r)$).  By (\ref{0528.3})-(\ref{0528.4}), it is easy to see that $\sum_{l}d_{l,r}\le 2$  in $A^+(r)$. We have to combine $\prod_{l=1}^{L}\left[(|(\bold\Delta^T\bbG^{(\tilde i)})_{j_l \bbe_{v_r}}|+|(\bbG^{(\tilde i)}\bold\Delta)_{\bbe_{v_r} j_l }|)^{d_{l,r}}\right]$ with $A^-(r)$ together so that we may use (\ref{0812.2}). Hence we below consider the upper bound of
\begin{equation}\label{b10}
N_1^{-p}\sum_{v_1,...,v_{p}=1}^{N_1}\prod_{r=1}^{p}\bbA^-(r)\prod_{l=1}^{L}\left[(|(\bold\Delta^T\bbG^{(\tilde i)})_{j_l \bbe_{v_r}}|+|(\bbG^{(\tilde i)}\bold\Delta)_{\bbe_{v_r} j_l }|)^{d_{l,\mu,r}}\right]
\end{equation}
first. One should notice that the above summation and product are only about $v_r$, which are independent of $l$.

For $r\ge q+1$ satisfing $\sigma_r=0$, recalling $A_{\bbv,i,\mu}(g, 0)$ in Definition \ref{0826-1}, we have
$$\frac{1}{N_1}\sum_{v_r=1}^{N_1}\bbA^-(r)=\tilde F(X,z)+O_{\prec}(\Psi^2),$$
where the $O_{\prec}(\Psi^2)$ follows from the fact that $\frac{1}{N_1}\sum_{v_r=1}^{N_1}\bbA^-(r)=m_N(z)-m(z)-\frac{1}{N_1}\sum_{v_r=1}^{N_1}\bbA_{\bbe_{v_r},i,\mu}(g_r,1)$ and using the large deviation inequality and (\ref{0812.2}) to control $\frac{1}{N_1}\sum_{v_r=1}^{N_1}\bbA_{\bbe_{v_r},i,\mu}(g_r,1)$. For the remaining $\sum_{r=q+1}^p|\sigma_r|$ indices, we always have the trivial order $\bbA^-(r)=1$ by the fact that $\bbA(r)=\bbA^+(r)$. When $\sigma_r=1$, $r\ge q+1$, by the expansion of Definition \ref{0826-1}(i), we have $\sum_{l=1}^{L} d_{l,r}=2$. Thus by (\ref{0812.2}) we have for $\sigma_r=1$, $r\ge q+1$,
\begin{eqnarray}\label{h0613.10}
\frac{1}{N_1}\sum_{v_r=1}^{N_1}\prod_{l=1}^{L}\left[(|(\bold\Delta^T\bbG^{(\tilde i)})_{j_l \bbe_{v_r}}|+|(\bbG^{(\tilde i)}\bold\Delta)_{\bbe_{v_r} j_l }|)^{d_{l,\mu,r}}\right]\prec \Psi^2.
\end{eqnarray}

Furthermore, consider $r\le q$. If there are two indices $v_r$ (associated with $\bbe_{v_r}$ in the factors $(\bbG^{(\tilde i)}\bold\Delta X^T)_{ \bbs i}$ and $(X\bold\Delta^T\bbG^{(\tilde i)})_{i \bbs }$, $\bbs\in \{\bbe_{v_r}, \bold\Delta e_{\mu}\}$ ) appearing in $\bbA^-(r)$, by (\ref{0812.2}) then we have
$$\frac{1}{N_1}\sum_{v_r=1}^{N_1}\bbA^-(r)\prec \Psi^2.$$
If there is no index $v_r$ appearing in $\bbA^-(r)$, then we use the bound
$$\bbA^-(r)\prec 1.$$
In this case($r\le q$, no $v_r$ appears in $\bbA^-(r)$) two indices $v_r$ both appear in $\bbA^+(r)$ and hence we combine them with $\bbA^-(r)$, as in (\ref{b10}). Hence we also have as in (\ref{h0613.10})
$$\frac{1}{N_1}\sum_{v_r=1}^{N_1}\bbA^-(r)\prod_{l=1}^{L}\left[(|(\bold\Delta^T\bbG^{(\tilde i)})_{j_l \bbe_{v_r}}|+|(\bbG^{(\tilde i)}\bold\Delta)_{\bbe_{v_r} j_l }|)^{d_{l,\mu,r}}\right]\prec \Psi^2.$$
If there is only one $v_r$ appearing in $\bbA^-(r)$, by Definition \ref{0826-1}(ii) we have $\sum_{l=1}^{L} d_{l,r}=1$. Hence one $v_r$ appears in $\bbA^+(r)$ and we combine such a term involving $v_r$ in $\bbA^+(r)$ with $\bbA^-(r)$, as in (\ref{b10}). Therefore by (\ref{0812.2}) we conclude that (\ref{h0613.10}) holds.
Therefore, summarizing above arguments for $r\le q$, we have for $r\le q$
$$\frac{1}{N_1}\sum_{v_r=1}^{N_1}\bbA^-(r)\prod_{l=1}^{L}\left[(|(\bold\Delta^T\bbG^{(\tilde i)})_{j_l \bbe_{v_r}}|+|(\bbG^{(\tilde i)}\bold\Delta)_{\bbe_{v_r} j_l }|)^{d_{l,r}}\right]\prec \Psi^2.$$
Furthermore, together with the arguments for $r\ge q+1$, we have
\begin{eqnarray}
&&N_1^{-p}\sum_{v_1,...,v_{p}=1}^{N_1}\prod_{r=1}^{p}\bbA^-(r)\prod_{l=1}^{L}\left[(|(\bold\Delta^T\bbG^{(\tilde i)})_{j_l \bbe_{v_r}}|+|(\bbG^{(\tilde i)}\bold\Delta)_{\bbe_{v_r} j_l }|)^{d_{l,\mu,r}}\right]\non
&&\prec (\tilde F(X,z)+O_{\prec}(\Psi^2))^{p-q-\sum_{r=q+1}^p|\sigma_r|} \Psi^{2(q+\sum_{r=q+1}^p|\sigma_r|)}.
\end{eqnarray}

We come back to analyze (\ref{h0613.8}). Similar to (\ref{0117.1}), we can show that
\begin{eqnarray}\label{0727.1}
&&\sum_{j_1,...,j_L}\prod_{l=1}^L\Bigg(N^{-d/2-k}(|(\bold\Delta^T\bbG^{(\tilde i)}\bold\Delta)_{j_l \mu}|+|(\bold\Delta^T\bbG^{(\tilde i)}\bold\Delta)_{\mu j_l }|)^{d_l- \sum_{r=1}^p d_{l,r}}\Bigg)\\
&&\prec N^{-1/2}\Psi^{d_{\bbx}-\sum_{l,r}d_{l,r}-\mathbf{1}(d_{\bbx,\mu}=3)},\nonumber
\end{eqnarray}
where $d_{\bbx,\mu}=\sum_{r=1}^pd_{\bbx,\mu,r}$. At the right hand side of (\ref{0727.1}), comparing to (\ref{0117.1}), $\mathbf{1}(d_{\bbv}=0)$ disappears since $d_{\bbv}$ is always equal to 0 for $\bbv=\bbe_{v_1},...,\bbe_{v_p}$. The reason why we can replace $d_{\bbx}$ by $d_{\bbx,\mu}$ is because we don't consider  $(\bbG^{(\tilde i)}\bold\Delta )_{\bbe_{v_r} j_l}$ and $(\bold\Delta^T\bbG^{(\tilde i)})_{j_l \bbe_{v_r}}$ in (\ref{0727.1}). Also the reason why $d_{\bbx}$ can be replaced by $d_{\bbx}-\sum_{l,r}d_{l,\mu,r}$ is because the power at the left hand side of (\ref{0727.1}) becomes $d_l-\sum_{r=1}^p d_{l,\mu,r}$.  

By the arguments above, we conclude that the LHS of (\ref{h0613.1}) is bounded by
$$N^{d_i\delta}\Psi^{d_{\bbx}-\sum_{l,r}d_{l,\mu,r}-\mathbf{1}(d_{\bbx,\mu}=3)}\Psi^{2(q+\sum_{r=q+1}^p|\sigma_r|)}\mathbb{E}(\tilde F(\bbX)+\Psi^2)^{p-q-\sum_{r=q+1}^p|\sigma_r|}.$$
So (\ref{0826.4}) holds if
\begin{eqnarray}\label{0826.5}
d_{\bbx}-\sum_{l,r}d_{l,r}-\mathbf{1}(d_{\bbx,\mu}=3)\ge 0.
\end{eqnarray}
In order to establish (\ref{0826.5}), we analyze $d_{\bbx,\mu,r}$ carefully. First, if $k(g_r)=0$, then $d_{\bbx,\mu,r}=0$. Secondly, if $1\le k(g_r)\le 3$, the following holds
$$d_{\bbx,r}\le 2 \Longrightarrow d_{\bbx,\mu,r}\le I(k(g_r)\ge 2),$$
where $d_{\bbx,r}=deg(A^+(r))$.
Hence if $d_{\bbx,\mu}=3$, then there exists an $r\le q$ such that $d_{\bbx,r}\ge3$. Then
$$d_{\bbx,r}-\sum_{l}d_{l,r}-1\ge 0,$$
from $\sum_{l}d_{l,r}\le 2$. Therefore, (\ref{0826.5}) holds by the fact that
$$d_{\bbx,r}-\sum_{l}d_{l,r}\ge 0.$$
 Therefore, we have proved the averaged local law.
\section{Proof of Lemma \ref{0525-1}}
The proof of Lemma \ref{0525-1} is exactly the same as the proof of Lemma 13 in \cite{WY} and thus we omit it.
\section{Proof of Theorem \ref{0826-3}}

\begin{proof}
%
%

Unlike \cite{KY14}, \cite{LHY2011} and \cite{BPZ2014a} we use the interpolation method (\ref{1124.14}), which is succinct and powerful when proving green function comparison theorems.
In view of (\ref{0603.2}) and (\ref{0603.3}) we have
\begin{eqnarray}\label{1119.10}
&&|\mathbb{E}K(N\int_{E_1}^{E_2}\Im m_{\bbX^1}(x+i\eta)dx)-\mathbb{E}K(N\int_{E_1}^{E_2}\Im m_{\bbX^0}(x+i\eta)dx)|=\non
&&\left|\mathbb{E}K(N\int_{E_1}^{E_2}\Im m_{\bbX^1}(x+i\eta)\mathcal{T}_N(\bbX^1)dx)-\mathbb{E}K(N\int_{E_1}^{E_2}\Im m_{\bbX^0}(x+i\eta)\mathcal{T}_N(\bbX^0)dx)\right|+O(N^{-1}).\non
\end{eqnarray}
Applying (\ref{1124.14}) with $F(\bbX)=K(N\int_{E_1}^{E_2}\Im m_{\bbX}(x+i\eta)\mathcal{T}_N(\bbX ))$ we only need to bound the following
\begin{eqnarray}\label{1120.1}
\Bigg|\sum_{i=1}^{M_1}\sum_{\mu=1}^N \mathbb{E}g(X_{i\mu}^1)-\mathbb{E}g(X_{i\mu}^0))\Bigg|,
\end{eqnarray}
where
\begin{eqnarray}\label{0826.7}
g(X_{i\mu}^u)=K(N\int_{E_1}^{E_2}\Im m_{\bbX_{(i\mu)}^{t,X_{i\mu}^u}}(x+i\eta)\mathcal{T}_N(\bbX_{(i\mu)}^{t,X_{i\mu}^u})dx),\quad u=0,1.
\end{eqnarray}
As in (\ref{1123.4}) and (\ref{1123.5}), we use Taylor's expansion up to order five to expand two functions $g(X_{i\mu}^u),u=0,1$ 
 at the point 0. Then take the difference of the Taylor's expansions of $g(X_{i\mu}^u),u=0,1$. 
  By the first two moments matching condition it then suffices to bound the third, fourth and remainder derivatives as follows
  \begin{equation}\label{0826.6}
N^{-3/2}\sum_{i=1}^{M_1}\sum_{\mu=1}^N\sum_{r=1}^3 \sum_{k_1,..,k_r\in \mathbb{N}_+ \atop k_1+..+k_r=3}C_r\Bigg| N\int_{E_1}^{E_2} \mathbb{E} K^{(r)}(0)\prod_{j=1}^r m_{\bbX_{(i\mu)}^{t,0}}^{(k_j)}(x+i\eta)\mathcal{T}_N(\bbX_{(i\mu)}^{t,0})dx \Bigg|,
\end{equation}
\begin{equation}\label{0524.1}
N^{-2}\sum_{i=1}^{M_1}\sum_{\mu=1}^N\sum_{r=1}^4 \sum_{k_1,..,k_r\in \mathbb{N}_+ \atop k_1+..+k_r=4}C_r \max_{x}|K^{(r)}(x)|  \mathbb{E}\prod_{j=1}^r \Bigg(N\int_{E_1}^{E_2}\Bigg| m_{\bbX_{(i\mu)}^{t,0}}^{(k_j)}(x+i\eta)\mathcal{T}_N(\bbX_{(i\mu)}^{t,0}) \Bigg|dx\Bigg),
\end{equation}
and the fifth derivative corresponding to the remainder of integral form
\begin{equation}\label{0524.1*}
N^{-5/2}\sum_{i=1}^{M_1}\sum_{\mu=1}^N\sum_{r=1}^5 \sum_{k_1,..,k_r\in \mathbb{N}_+ \atop k_1+..+k_r=4}C_r \max_{x}|K^{(r)}(x)|  \mathbb{E}\prod_{j=1}^r \Bigg(N\int_{E_1}^{E_2}\Bigg| m_{\bbX_{(i\mu)}^{t,\theta X_{i\mu}^u}}^{(k_j)}(x+i\eta)\mathcal{T}_N(\bbX_{(i\mu)}^{t,\theta X_{i\mu}^u }) \Bigg|dx\Bigg),
\end{equation}
where $C_r$ is a constant depending on r only, $m_{\bbX_{(i\mu)}^{t,0}}^{(k_i)}(\cdot)$ denotes the $k_i$th derivative with respect to $X_{i\mu}^{u}$ and $0\leq\theta\leq 1$. Here we ignore the terms involving the derivatives of $\mathcal{T}_N(\bbX_{(i\mu)}^{t,\theta X_{i\mu}^u})$ due to (\ref{0603.2}), (\ref{0603.3}) and (\ref{1119.12}). 

We focus on (\ref{0524.1}) and (\ref{0524.1*}) first. To investigate (\ref{0524.1}) and (\ref{0524.1*}) we claim that it suffices to prove that
\begin{eqnarray}\label{0524.2}
\Bigg(N\int_{E_1}^{E_2}\Bigg| m_{\bbX_{(i\mu)}^{u,X_{i\mu}^1}}^{(k)}(x+i\eta)\mathcal{T}_N(\bbX_{(i\mu)}^{u,X_{i\mu}^1})\Bigg|dx\Bigg)\prec(N^{\frac{1}{3}+\ep}\Psi^2),
\end{eqnarray}
where $k\ge 1$. Indeed, if (\ref{0524.2}) holds then (\ref{0524.2}) still holds when $X_{i\mu}^1$ is replaced by $\theta X_{i\mu}^1$ by checking on the argument of (\ref{0524.2}). We then conclude that the facts that $(\ref{0524.1})\prec(N^{\frac{1}{3}+\ep}\Psi^2)$ and that $(\ref{0524.1*})\prec(N^{-\frac{1}{2}+\frac{1}{3}+\ep}\Psi^2)$ follow from Lemma \ref{1123-3}, (\ref{1119.12}) and an application of (\ref{1123.4}).

By (\ref{0526.2}) and (\ref{0526.1}) we have for $k\ge 1$
\begin{eqnarray}\label{h0613.2}
\Bigg| m_{\bbX_{(i\mu)}^{u,X_{i\mu}^1}}^{(k)}(x+i\eta)\mathcal{T}_N(\bbX_{(i\mu)}^{u,X_{i\mu}^1})\Bigg|\prec \Psi^2,
\end{eqnarray}
which implies that $(\ref{0524.2})\prec(N^{\frac{1}{3}+\ep}\Psi^2)$. Here we would point out that the derivatives $m_{\bbX_{(i\mu)}^{u,X_{i\mu}^1}}^{(k)}(\cdot)$ are of the form $\frac{1}{M_1}\sum\limits_{k=1}^{M_1}C_{k,k,i,\mu}(g_h)$ from (\ref{a16}), (\ref{0526.2}), (\ref{a38}), (\ref{a37}), (\ref{a34}) and (\ref{1119.2}). By Lemma 2.3 of \cite{BPWZ2014b} we have
\begin{eqnarray}\label{h0613.3}
\Psi^2 \asymp \frac{1}{N\sqrt{\eta}}=O(N^{-\frac{2}{3}+\ep/2}).
\end{eqnarray}

From now on we consider (\ref{0826.6}). One should notice that we do not extract the summation $\sum_{i=1}^{M_1}\sum_{\mu=1}^N$ outside the expectation like (\ref{0524.1}) in order to make it easier, compared with the proof of (\ref{0826.4}). Since $K(.)$ involved in the above expectation is non random and does not affect the order of the expectation, we can ignore $K^{(r)}(0)$ in the sequel. Similar to the claim (\ref{0524.2}), it suffices to find the upper bound of
\begin{eqnarray}\label{h0613.4}
N^{-3/2}\sum_{i=1}^{M_1}\sum_{\mu=1}^N\sum_{r=1}^3 \sum_{k_1,..,k_r\in \mathbb{N}_+ \atop k_1+..+k_r=3}\Bigg|N\int_{E_1}^{E_2} \mathbb{E}\prod_{j=1}^r m_{\bbX_{(i\mu)}^{u,X_{i\mu}^1}}^{(k_j)}(x+i\eta)\mathcal{T}_N(\bbX_{(i\mu)}^{u,X_{i\mu}^1})dx \Bigg|.
\end{eqnarray}
First of all, (\ref{h0613.2})-(\ref{h0613.3}) always hold, which concludes that for $r\ge 2$
\begin{eqnarray}\label{h0613.5}
&&N^{-3/2}\sum_{i=1}^{M_1}\sum_{\mu=1}^N\sum_{r=2}^3 \sum_{k_1,..,k_r\in \mathbb{N}_+ \atop k_1+..+k_r=3}\Bigg|N\int_{E_1}^{E_2} \mathbb{E}\prod_{j=1}^r m_{\bbX_{(i\mu)}^{u,X_{i\mu}^1}}^{(k_j)}(x+i\eta)\mathcal{T}_N(\bbX_{(i\mu)}^{u,X_{i\mu}^1})dx \Bigg|\non
&&\prec N^{1/2+1/3+\ep}\Psi^4 \prec N^{-\frac{1}{2}+2\ep}.
\end{eqnarray}
 Therefore, referring to (\ref{h0613.4}), it remains to consider the case $r=1$, i.e. we need to find the upper bound of
$$N^{-3/2}\sum_{i=1}^{M_1}\sum_{\mu=1}^N \Bigg|N\int_{E_1}^{E_2} \mathbb{E} m_{\bbX_{(i\mu)}^{u,X_{i\mu}^1}}^{(3)}(x+i\eta)\mathcal{T}_N(\bbX_{(i\mu)}^{u,X_{i\mu}^1})dx \Bigg|.$$
 By checking (\ref{1119.10})-(\ref{h0613.3}) carefully one can find if we can extract one more $\frac{1}{\sqrt N}$ from the expectation above then the proof of this theorem is complete.
  In other words, the aim is to prove that
 $$\Bigg|\mathbb{E} m_{\bbX_{(i\mu)}^{u,X_{i\mu}^1}}^{(3)}(x+i\eta)\mathcal{T}_N(\bbX_{(i\mu)}^{u,X_{i\mu}^1})dx \Bigg|\prec N^{-1/2}\Psi^2.$$
 This can be proved by (\ref{0526.2}) and (\ref{0812.2}) as in (\ref{0117.1}) and (\ref{0727.1}) and the details are ignored here.
 Here we would comment that $N^{-1/2}$ comes from counting the number of the $i$-th row of $\bbX$ in the expansion of $m_{\bbX_{(i\mu)}^{u,X_{i\mu}^1}}^{(3)}(x+i\eta)\mathcal{T}_N(\bbX_{(i\mu)}^{u,X_{i\mu}^1})$ and one can also refer to the arguments above (\ref{h0613.1}) to see $d_{\bbv}=0$. In addition $\Psi^2$ follows from (\ref{0812.2}) and the fact that there are always two indices $\bbe_{v_r}$ involved in $m_{\bbX_{(i\mu)}^{u,X_{i\mu}^1}}^{(3)}(x+i\eta)\mathcal{T}_N(\bbX_{(i\mu)}^{u,X_{i\mu}^1})$.

Summarizing the above we have shown that
\begin{eqnarray}\label{0117.2}
|\mathbb{E}K(N\int_{E_1}^{E_2}\Im m_{\bbX^1}(x+i\eta)dx)-\mathbb{E}K(N\int_{E_1}^{E_2}\Im m_{\bbX^0}(x+i\eta)dx)|\prec N^{-\frac{1}{3}+2\ep}.
\end{eqnarray}
The proof is complete by choosing an appropriate $\ep$. 
\end{proof}



\begin{thebibliography}{20}


\bibitem{ander84}
Anderson, T. W. (1984).
\textit{An introduction to multivariate statistical analysis}, 2nd ed.
  Wiley, New York.

\bibitem{BS06}
Bai, Z. D. and Silverstein, J. W. (2006).
\textit{Spectral analysis of large dimensional random matrices}, 1st ed.
Springer, New York.

%

\bibitem[Bao, Z. G., Hu, J. Pan, G. M. and Zhou, W.(2015)]{BHPZ} Bao, Z. G., Hu, J. Pan, G. M. and Zhou, W. (2015). Canonical correlation coefficients of high-dimensional normal vectors: finite rank case. http://arxiv.org/abs/1407.7194.

\bibitem[Bao, Z. G., Pan, G. M. and Zhou, W.(2015)]{BPZ2014a}
Bao, Z. G., Pan, G. M. and Zhou, W. (2015).  Universality for the largest eigenvalue of sample covariance matrices with general population. \textit{Ann. Statist.} \textbf{43}(1), 382--421.

\bibitem[Bao, Z. G., Pan, G. M. and Zhou, W.(2014)]{BPWZ2014b} Bao, Z. G., Pan, G. M. and Zhou, W. Local density of the spectrum on the edge for sample covariance matrices with general population. \emph{Preprint.} Available at http://www. ntu. edu. sg/home/gmpan/publications. html.


\bibitem[EL. Karoui, N.(2007)]{K2007} EL Karoui, N. (2007).  Tracy-Widom Limit for the Largest Eigenvalue of a Large Class of Complex Sample Covariance
Matrices, \emph{Ann. Probab.} \textbf{35},663-714.



\bibitem[Erd\"{o}s, L., Yau, H.-T., and Yin, J.(2011)]{LHY2011}
Erd\"{o}s, L., Yau, H.-T., and Yin, J.(2011). Rigidity of Eigenvalues of Generalized Wigner Matrices
, \emph{Advances in Mathematics}, \textbf{229(3)},   1435-1515.


%
\bibitem{FP2009}
 F\'{e}ral, D.,  P\'{e}ch\'{e}, S.(2009). The largest eigenvalues of sample covariance matrices for a spiked population: Diagonal case. J. Math. Phys. {\bf 50}, 073302.

\bibitem{YVR2009}
Fujikoshi, Y., Ulyanov, V. V., and Shimizu, R. (2009). Multivariate Statistics :
High-Dimensional and Large-Sample Approximations. Wiley.

\bibitem{GZ}
Gao, C., Ma, Z. and Ren, Z., H. Zhou. (2016). Minimax Estimation in Sparse Canonical Correlation Analysis. To appear in  Annals of Statistics.


\bibitem{H1936} H. Hotelling. (1936). Relations between two sets of variates. Biometrika, 321-377.
\bibitem{WY}  Han, X, Pan, G. M. and Zhang, B(2016). The Tracy-Widom law for the Largest Eigenvalue of F Type Matrix. To appear in  Annals of Statistics.
\\ http://www3.ntu.edu.sg/home/gmpan/Fmatrix\_30052015.pdf


\bibitem[Johnstone, I. M.(2001)]{J2001}  Johnstone, I.M. (2001).  On the Distribution of the Largest Eigenvalue in Principal Component Analysis, \emph{Ann. Statist.} \textbf{29}, 295-327.


\bibitem{J08}
Johnstone, I. M. (2008).
Multivatiate analysis and Jacobi ensembles:Largest eigenvalue,Tracy-Widom limits and rates of convergence.
\textit{Ann. Statist.}
\textbf{36} 2638--2716.

\bibitem{J09}
Johnstone, I. M. (2009).
Approximation null distribution of the largest root in multivariate analysis.
\textit{Ann. Appl. Statist.}
\textbf{3} No.4 1616--1633.


\bibitem{KY14}
Knowles, A. and Yin, J. (2015).
Anisotropic local laws for random matrices.
\textit{arXiv:1410.3516v3}.






\bibitem{JK14}
Lee, J. O. and Schnelli, K. (2014).
Tracy-Widom Distribution for the Largest Eigenvalue of Real Sample Covariance Matrices with General Population.
\textit{arXiv:1409.4979v1}.




\bibitem{MP67}
Mar$\check{c}$enko, V. A. and Pastur, L. A.  (1967).
Distribution of eigenvalues for some sets of random matrices.
\textit{Sb. Math.}
\textbf{4} 457--483.

\bibitem[Muirhead, R. J. (1982)]{M1982} Muirhead, R. J. (1982). Aspects of Multivariate Statistical Theory. \emph{Wiley, New York.}
MR0652932.

\bibitem{PY11}
Pillali, N. S. and Yin, J. (2011).
Universality of covariance matrices.
\textit{Ann. Appl. Prob.}
\textbf{24} No.3,935--1001.













\bibitem[Silverstein(1995)]{s1}
Silverstein, J. W. and  Choi,S.-I (1995). Analysis of the Limiting Spectral Distribution of Large Dimensional Random Matrices.
\emph{Journal of Multivariate Analysis}, 54(2), 295¨C309.



\bibitem{Soshnikov2002}
 Soshnikov, A. (2002). A note on universality of the distribution of the largest eigenvalues in certain sample covariance matrices. Jour. Stat. Phys. {\bf 108}(5), 1033-1056.
\bibitem{TV2011}
 Tao, T. and Vu,V. (2011). Random matrices: Universality of local eigenvalue statistics. Acta Mathematica, {\bf 206}(1), 127-204.
\bibitem{TV2012}
 Tao, T. and Vu, V. (2012). Random covariance matrices: Universality of local statistics of eigenvalues.  Ann. Probab. {\bf 40}(3), 1285-1315.
\bibitem{TW1994}
Tracy, C. A. and Widom, H. (1994). Level-spacing distributions and the Airy kernel. Comm. Math. Phys. {\bf 159}, No. 1, 151-174.
\bibitem{TW1996}
Tracy, C. A. and Widom, H. (1996). On orthogonal and symplectic matrix ensembles. Comm. Math. Phys. {\bf 177}, No. 3, 727-754.


\bibitem{WK2012}
Wang, K. (2012). Random covariance matrices: Universality of local statistics of eigenvalues up to the edge. Random matrices: Theory and Applications, {\bf 1}(1), 1150005.

\bibitem{YP2015}
Yang, Y. R. and Pan, G. M. (2015). Independence test for high dimensional data based on regularized canonical correlation coefficients. \emph{Ann. Statist.} \textbf{43(2)}, 467-500.

\bibitem{ZSR} Zheng, S. R. (2012). Central Limit Theorem for Linear Spectral Statistics of Large Dimensional
F Matrix. Ann. Institut Henri Poincare Probab. Statist. 48, 444-476.




\end{thebibliography}
\end{document}